\newbox\mybox
\def\overtag#1#2#3{\setbox\mybox\hbox{$#1$}\hbox to
  0pt{\vbox to 0pt{\vglue-#3\vglue-\ht\mybox\hbox to \wd\mybox
      {\hss$\ss#2$\hss}\vss}\hss}\box\mybox}
\def\undertag#1#2#3{\setbox\mybox\hbox{$#1$}\hbox to 0pt{\vbox to
    0pt{\vglue#3\vglue\ht\mybox\hbox to \wd\mybox
      {\hss$\ss#2$\hss}\vss}\hss}\box\mybox}
\def\lefttag#1#2#3{\hbox to 0pt{\vbox to 0pt{\vglue -6pt\hbox to
      0pt{\hss$\ss#2$\hskip#3}\vss}}#1}
\def\righttag#1#2#3{\hbox to 0pt{\vbox to 0pt{\vglue -6pt\hbox to
      0pt{\hskip#3$\ss#2$\hss}\vss}}#1}
\let\ss\scriptstyle
\def\splicediag#1#2{\xymatrix@R=#1pt@C=#2pt@M=0pt@W=0pt@H=0pt}
\def\Dot{\lower.2pc\hbox to 2pt{\hss$\bullet$\hss}}
\def\Circ{\lower.2pc\hbox to 2pt{\hss$\circ$\hss}}
\def\Vdots{\raise5pt\hbox{$\vdots$}}
\newcommand\lineto{\ar@{-}}
\newcommand\dashto{\ar@{--}}
\newcommand\dotto{\ar@{.}}
\let\cal\mathcal
\renewcommand{\setminus}{\smallsetminus}
\newcommand\Q{{\mathbb Q}}
\newcommand\Pro{{\mathbb P}}
\newcommand\R{{\mathbb R}}
\newcommand\C{{\mathbb C}}
\newcommand\Z{{\mathbb Z}}
\newcommand\N{{\mathbb N}}
\DeclareMathOperator{\NL}{NL}
\DeclareMathOperator{\val}{val}
\DeclareMathOperator{\ord}{ord}
\DeclareMathOperator{\mult}{mult}
\DeclareMathOperator{\inn}{inn}
\DeclareMathOperator{\out}{out}
\DeclareMathOperator{\Gr}{Gr}
\newtheorem{theorem}{Theorem}[section]
\newtheorem{proposition}[theorem]{Proposition}
\newtheorem*{theorem*}{Theorem}
\newtheorem{corollary}[theorem]{Corollary}
\newtheorem{lemma}[theorem]{Lemma}
\theoremstyle{definition}
\newtheorem*{amalgamation*}{Amalgamation}
\newtheorem{example}[theorem]{Example}
\newtheorem{remark}[theorem]{Remark}
\newtheorem{remarks}[theorem]{Remarks}
\newtheorem*{remark*}{Remark}
\theoremstyle{plain}
\newcounter{algorithm}
\newtheorem{algo}[algorithm]{Algorithm}
\renewcommand{\int}{\operatorname{int}}
\newcommand{\lcm}{\operatorname{lcm}}
\keywords{Complex Surface Singularities, Lipschitz Geometry, Lipschitz Normal Embeddings, Polar Varieties, Discriminant Varieties, Valuation Spaces}
\subjclass{Primary 32S25; Secondary 13A18, 14B05}
\begin{document}

\title{On Lipschitz Normally Embedded complex surface germs}

\author{Andr\'e Belotto da Silva}
\address{IMJ-PRG, CNRS 7586, Universit\'e de Paris, Institut de Math\'ematiques de Jussieu Paris Rive Gauche, France}
\email{\href{mailto:andre.belotto@imj-prg.fr}{andre.belotto@imj-prg.fr}}
\urladdr{\url{https://andrebelotto.com}}

\author{Lorenzo Fantini}
\address{Centre de Mathématiques Laurent Schwartz, Ecole Polytechnique and CNRS, Institut Polytechnique de Paris,}
\email{\href{mailto:lorenzo.fantini@polytechnique.edu}{lorenzo.fantini@polytechnique.edu}}
\urladdr{\url{https://lorenzofantini.eu/}}

\author{Anne Pichon}
\address{Aix-Marseille Universit\'e, CNRS, Centrale Marseille, I2M, Marseille, France}
\email{\href{mailto:anne.pichon@univ-amu.fr}{anne.pichon@univ-amu.fr}}
\urladdr{\url{http://iml.univ-mrs.fr/~pichon/}}

\begin{abstract}
	We undertake a systematic study of Lipschitz Normally Embedded normal complex surface germs.
	We prove in particular that the topological type of such a germ determines the combinatorics of its minimal resolution which factors through the blowup of its maximal ideal and through its Nash transform, as well as the polar curve and the discriminant curve of a generic plane projection, thus generalizing results of Spivakovsky and Bondil that were known for minimal surface singularities.
	In an appendix, we give a new example of a Lipschitz Normally Embedded surface singularity.
\end{abstract}

\maketitle

 
 \section{Introduction}
 

A germ of a real or complex analytic space $(X,0)$ embedded in $(\R^n,0)$ or in $(\C^n,0)$ is equipped with two natural metrics: its \emph{outer metric} $d_{\out}$, induced by the standard metric of the ambient space, and its \emph{inner metric} $d_{\inn}$, which is the associated arc-length metric on the germ. 
These two metrics are usually studied up to bilipschitz local homeomorphisms, since they then give rise to tame classifications of singular sets, as was proven in various geometric contexts by Pham and Teissier~\cite{PhamTeissier1969}, Mostowski~\cite{Mostowski1985, Mostowski1988}, Parusi\'nski~\cite{Parusinski1988, Parusinski1994}, and Valette \cite{Valette2005}.

The  germ  $(X,0)$ is said to be \emph{Lipschitz Normally Embedded} (\emph{LNE} for short) if the identity map of $(X,0)$ is a bilipschitz homeomorphism between the inner and the outer metric, that is if there exist a neighborhood $U$ of $0$ in $X$ and a constant $K\geq 1$ such that
\[
d_{\inn}(x,y) \leq K d_{\out}(x,y)
\]
for all $x$ and $y$ in $U$.
Since the inner and the outer geometries of $(X,0)$ are invariant under bi-Lipschitz homeomorphisms (see \cite[Proposition~7.2.13]{Pichon2020}), this property only depends on the analytic type of $(X,0)$, and not on the choice of an embedding in some smooth ambient space $(\R^n,0)$ or $(\C^n,0)$.

The study of Lipschitz Normally Embedded singularities is a very active research area with many recent results, for example by Birbrair, Bobadilla, Fernandes, Heinze, Kerner, Mendes, Misev, Neumann, Pedersen, Pereira, Pichon, Ruas, and Sampaio (see \cite{BirbrairMendes2018, BobadillaHeinzePereiraSampaio2019, FernandesSampaio2019, KernerPedersenRuas2018, NeumannPedersenPichon2020a, NeumannPedersenPichon2020b}), but despite the current progress it is still in its infancy. 

While an irreducible complex curve germ $(X,0)$ is LNE if and only if it is smooth (see \cite{PhamTeissier1969, Fernandes2003, NeumannPichon2007}), the situation is far richer already for complex surface germs.
Lipschitz Normally Embedded germs are fairly common in this context, including in particular all minimal surface singularities (as proven in \cite{NeumannPedersenPichon2020b} exploiting a characterization obtained in \cite{NeumannPedersenPichon2020a}), and the superisolated surface singularities with LNE tangent cone (see \cite{MisevPichon2018}).
In this paper we prove several properties of a general complex LNE normal surface, describing in particular its generic polar curves and the discriminant curves of its generic plane projections.
We also give a new example of a LNE normal singularity which is neither minimal nor superisolated, showing that the  class of LNE normal surface singularities contains more elements than the ones already discovered (see Appendix~\ref{appendix:example}).

Among LNE surface singularities, the most widely studied are minimal singularities, which have been introduced in greater generality in \cite{Kollar1985}.
In dimension two they are the rational surface singularities with reduced fundamental cycle, and they have the remarkable property that the topological type of $(X,0)$ determines the following data, which is a priori of analytic nature:
\begin{enumerate}[label=(\arabic*)]
	\item \label{list_intro:property_polar} The  dual graph of the minimal  good resolution of $(X,0)$ which factors through the blowup of the maximal ideal and through the Nash transform, decorated by two families of arrows corresponding to the strict transform of a generic hyperplane section and to the strict transform of the polar curve of a generic plane projection.
	\item \label{list_intro:property_discriminant} The topological type of the discriminant curve of a generic projection.
	Moreover, this data can be computed explicitly from the dual graph of the minimal good resolution of $(X,0)$.
\end{enumerate}

The first property is a deep result of Spivakovsky  \cite[III, Theorem 5.4]{Spivakovsky1990}, the second one was later proven by Bondil \cite[Theorem 4.1]{Bondil2003}, \cite[Proposition 5.4]{Bondil2016}.

Observe that by good resolution of $(X,0)$ we mean a proper bimeromorphic morphism $\pi\colon X_\pi\to X$ from a smooth surface $X_\pi$ to $X$ which is an isomorphism outside of a simple normal crossing divisor $E=\pi^{-1}(0)$, and the vertices of the dual graph $\Gamma_\pi$ of $E$ carry as weights the genera and self-intersections of the corresponding irreducible components of $E$.
The fact that the topological type of a surface germ determines the dual graph of its minimal resolution is a classical result of Neumann \cite{Neumann1981}.

The two main results of the present paper extend the theorems of Spivakovsky and Bondil to all LNE surface singularities.
Furthermore, we strengthen Spivakovsky's result by showing that another important datum is an invariant of the topological type of $(X,0)$, namely the {inner rates} of $(X,0)$, an infinite family of rational numbers which measures the local metric structure of the germ $(X,0)$ with respect to its inner metric.
If $E_v$ is a component of the exceptional divisor of a good resolution of $(X,0)$, then its inner rate $q_v$, introduced in \cite{BirbrairNeumannPichon2014} and further studied in \cite{BelottodaSilvaFantiniPichon2019}, measures the shrinking rate of the piece of the link of $(X,0)$ that corresponds to $E_v$ (see \cite[\S 1, \S3]{BelottodaSilvaFantiniPichon2019}).
These results show the crucial role played by generic projections and polar varieties, notions introduced and studied by Teissier~\cite{Teissier1982}, in the understanding of LNE singularities.
$$\diamond$$

In order to give a precise statement of our results we need to introduce some additional notation.
Let $\pi\colon X_\pi \to X$ be a good resolution of $(X,0)$ and denote by $V(\Gamma_\pi)$ the set of vertices of the dual graph $\Gamma_\pi$ of $\pi$, so that every element $v$ of $V(\Gamma_\pi)$ corresponds to a component $E_v $ of the exceptional divisor $E=\pi^{-1}(0)$ of $\pi$. 
We denote by $Z_{\max}(X,0)=\sum_{v\in V(\Gamma_\pi)}m_vE_v$ the maximal ideal divisor of $(X,0)$, that is the divisor of $X_\pi$ supported on $E$ and whose coefficient $m_v$, called \emph{multiplicity of $v$}, is the multiplicity along the component $E_v$ of the pullback via $\pi$ of a generic linear form $h\colon(X,0)\to(\C,0)$ on $(X,0)$.
While in general the divisor $Z_{\max}(X,0)$ depends on the analytic type of $(X,0)$, there is another divisor supported on $E$, namely the fundamental cycle $Z_{\min}$ of $\Gamma_\pi$, defined as the unique minimal nonzero element of the Lipman cone of $\Gamma_\pi$ (see Section~\ref{sec:Zmin=Zmax} for the relevant definitions), which only depends on the graph $\Gamma_\pi$. 
Finally, we denote by $Z_{\Gamma_{\pi}}$ the canonical cycle of $\Gamma_\pi$, that is the divisor supported on $E$ determined by $Z_{\Gamma_{\pi}} \cdot E_{v} = - E_{v}^2 + 2g(E_v) - 2$ for every vertex $v$ of $\Gamma_\pi$.

For each vertex $v$ of $\Gamma_\pi$, set  $l_v=-Z_{\max}(X,0)\cdot E_v$, that is,  $l_v$  is  the intersection multiplicity of $E_v$ with the strict transform of a generic hyperplane section $h^{-1}(0)$ of $(X,0)$ via $\pi$. 
We call \emph{$\cal L$-vector} of $(X,0)$ the vector $L_\pi=(l_v)_{v\in V(\Gamma_\pi)}\in\Z_{\geq0}^{V(\Gamma_\pi)}$.  
Recall that the blowup $\mathrm{Bl}_0X$ of the maximal ideal of $(X,0)$ is the minimal resolution of the base points of the family of generic hyperplane sections of $(X,0)$.
Whenever $\pi\colon X_\pi \to X$ factors through $\mathrm{Bl}_0X$, the strict transform of such a generic hyperplane section via $\pi$ consists of a disjoint union of smooth curves that intersect transversely $E$ at smooth points of $E$ (see Appendix~\ref{appendix:A}), and $l_v$ is the number of such curves passing through the component $E_v$; we then call \emph{$\cal L$-node} of $\Gamma_\pi$ (or simply of $(X,0)$) any vertex $v$ such that $l_v>0$.
Similarly, we denote by $p_v$ the intersection multiplicity of the strict transform of the polar curve of a generic plane projection $\ell\colon(X,0)\to(\C^2,0)$ with  $E_v$ and  we call \emph{$\cal P$-vector} of $(X,0)$ the vector $P_\pi=(p_v)_{v\in V(\Gamma_\pi)}\in\Z_{\geq0}^{V(\Gamma_\pi)}$.  
The Nash transform $\nu$ of $(X,0)$ is the minimal resolution of the base points of the family of  generic polar curves of $(X,0)$ (see \cite[Section III, Theorem 1.2]{Spivakovsky1990}). 
Whenever $\pi\colon X_\pi \to X$ factors through $\nu$ then such a strict transform consists of smooth curves intersecting $E$ transversely at smooth points, and $p_v$ equals the number of such curves through $E_v$ 
(see again Appendix~\ref{appendix:A}).

We then call \emph{$\cal P$-node} of $\Gamma_\pi$ (or simply of $(X,0)$) any vertex $v$ such that $p_v>0$.
Finally, whenever $\pi\colon X_\pi \to X$ factors through the blowup of the maximal ideal, we define a natural distance $d$ on $\Gamma_\pi$ by declaring the length of an edge $e$ between two vertices $v$ and $v'$ of $\Gamma_\pi$ to be $1/\lcm(m_v,m_{v'})$.

We can now state our first main theorem, which generalizes Spivakovsky's result \cite[III, Theorem 5.4]{Spivakovsky1990} to all LNE normal surface germs.

\begin{theorem} \label{thm:main}
	Let $(X,0)$ be a LNE normal surface germ, let $\pi\colon X_\pi \to X$ be the minimal good resolution of $(X,0)$, and let $\Gamma_\pi$ be the dual graph of $\pi$. 
	Then the following properties hold.	 
	\begin{enumerate}
		\item \label{thm:main_factorization_blowup} 
		The resolution $\pi$ factors through the blowup of the maximal ideal of $(X,0)$ and all $\cal L$-nodes have multiplicity one.
		\item \label{thm:main_L-vector} 
		The maximal ideal divisor $Z_{\max}(X,0)$ of $(X,0)$ coincides with the fundamental cycle $Z_{\min}$ of $\Gamma_\pi$. In particular, $\Gamma_\pi$ determines the multiplicity $m_v$ associated with every vertex $v$ of $\Gamma_\pi$, and therefore also the set $V_{\cal L}$ of $\cal L$-nodes of $\Gamma_\pi$, the $\cal L$-vector $L_\pi$ of $(X,0)$, and the distance $d$ on $\Gamma_\pi$.
		
		\item \label{thm:main_inner_rates} 
		The inner rate $q_v$ of each vertex (or, more generally, of each {divisorial point}) of $\Gamma_\pi$ is given by 
		\[
		q_v=d(v, V_{\cal L})+1.
		\]
		
		\item \label{thm:main_P-vector} 
		The $\cal P$-vector $P_\pi$ of $(X,0)$ is determined by
		$$p_v = - E_v \cdot \Big(\sum\nolimits_{v'} (m_{v'} q_{v'}-1) E_{v'} - (Z_{\Gamma_{\pi}} - Z_{\min})\Big)$$
		for every vertex $v$ of $\Gamma_\pi$.
		Moreover, if $v$ is an $\cal L$-node, this formula simplifies to $p_v=2\big(g(E_v)+l_v-1\big)$.
				
		\item \label{thm:main_P-nodes} 
		Let $\pi'$ be the minimal good resolution of $(X,0)$ that factors through its Nash transform.
		A vertex $v$ of $\Gamma_{\pi'}$ is a $\cal P$-node of $(X,0)$ if and only if either $l_v>1$ or there exist two distinct vertices $v'$ and $v''$ of $\Gamma_{\pi'}$ adjacent to $v$ and such that $q_{v'},q_{v''}<q_v$. 
		
		\item \label{thm:main_Nash_transform}
		The resolution $\pi'$ is obtained by composing $\pi$ with a finite sequence of blowups of double points of the respective exceptional divisor: at each step, a double point in $E_v\cap {E_{v'}}$ has to be blown up if and only if $|q_v-{q_{v'}}|<d(v,v')$. 
		In particular, an edge $e=[v,v']$ of $\Gamma_\pi$ contains a $\cal P$-node of $(X,0)$ in its interior (that is, such a $\cal P$-node appears as a vertex after blowing up finitely many double points of the exceptional divisor, starting with the blowup of the double point associated with $e$) if and only if $|q_v-{q_{v'}}|<d(v,v')$; when this is the case, $e$ contains exactly one $\cal P$-node $w$, and the inner rate of $w$ is $q_w=\big(d(v,v')+q_v+q_{v'}\big)/2$. 
			\end{enumerate}
\end{theorem}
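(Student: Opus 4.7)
The strategy is to prove parts (i)--(iii) directly from the LNE hypothesis, then derive parts (iv)--(vi) as algebraic consequences via the Laplacian-type formula relating inner rates and polar multiplicities from~\cite{BelottodaSilvaFantiniPichon2019}. For parts (i) and (ii), the key step is to show that every $\cal L$-node $v$ of $\pi$ satisfies $m_v=1$: if $m_v\geq 2$, then a generic hyperplane section $h^{-1}(0)$ would have at least two branches whose strict transforms meet $E_v$, so these branches would share the same tangent line at $0$ and their outer distance would vanish to strictly higher order than their inner distance along the link, contradicting LNE. Since $\mathrm{Bl}_0 X$ is the minimal resolution of the base points of generic hyperplane sections, multiplicity one at each $\cal L$-node is exactly the condition that $\pi$ factors through $\mathrm{Bl}_0 X$, proving (i). Part (ii) then follows since $Z_{\max}$ already satisfies the Lipman cone inequalities $-Z_{\max}\cdot E_v=l_v\geq 0$; combined with the multiplicity-one condition on $\cal L$-nodes and a standard arithmetic-genus computation, this identifies $Z_{\max}$ with the minimal nonzero element $Z_{\min}$.

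Part (iii) is the geometric heart of the proof. There is a general combinatorial lower bound for inner rates in terms of the graph distance to $V_{\cal L}$; the LNE hypothesis is used to upgrade it to an equality. Concretely, along any geodesic $v=v_0,v_1,\ldots,v_k\in V_{\cal L}$ in $\Gamma_\pi$ one builds pairs of test arcs on $(X,0)$ supported on the \carrousel pieces associated with the path and compares their inner and outer distances: the inner distance is controlled by $\sum_i d(v_{i-1},v_i)=d(v,V_{\cal L})$, while the outer distance is controlled by $q_v$. LNE forces these two quantities to be bilipschitz-equivalent, which yields $q_v-1=d(v,V_{\cal L})$. The formula extends to arbitrary divisorial points by linearity of inner rates along edges, obtained by applying the same argument after one additional blowup.

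Parts (iv)--(vi) are then essentially algebraic consequences of (ii)--(iii). Part (iv) is obtained by substituting $Z_{\max}=Z_{\min}$ from (ii) into the general Laplacian formula of~\cite{BelottodaSilvaFantiniPichon2019}, which expresses $p_v$ as an intersection product against $\sum_{v'}(m_{v'}q_{v'}-1)E_{v'}$ plus the canonical correction $Z_{\Gamma_\pi}-Z_{\max}$; the $\cal L$-node simplification follows by plugging in $m_v=1$, $q_v=1$ and applying adjunction. For (v), combining (iv) with the linear inner-rate formula from (iii) realizes $p_v$ as a weighted discrete Laplacian of $q$ on $\Gamma_{\pi'}$, whose strict positivity at a vertex $v$ is equivalent to the stated neighbor condition, with the $l_v>1$ case separately accounting for the $\cal L$-nodes. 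For the final assertion of (vi), on an edge $e=[v,v']$ of $\Gamma_\pi$ the inner rate is piecewise linear with slopes $\pm 1$ in the parameter given by $d$, so a peak $w$ exists in the interior of $e$ iff both slopes leave $v$ and $v'$ upwards, i.e.\ iff $|q_v-q_{v'}|<d(v,v')$. Solving the linear system $q_w=q_v+d(v,w)=q_{v'}+d(w,v')$ together with $d(v,w)+d(w,v')=d(v,v')$ yields $q_w=(d(v,v')+q_v+q_{v'})/2$, and the description of $\pi'$ as iterated blowups of double points follows by applying this criterion recursively on each new edge created by a blowup.

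The principal obstacle is part (iii): reducing the inner-rate formula to the LNE hypothesis requires a careful geometric argument inside the \carrousel of $(X,0)$, essentially ruling out the possibility of "outer shortcuts" between arcs living in distant pieces of the decomposition.
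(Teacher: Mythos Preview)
Your outline captures the broad architecture correctly for parts (ii) and (iv), but there are substantive gaps in (i), (iii), (v), and (vi), and the paper's actual route is quite different from what you sketch.

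\textbf{Part (i).} The claim ``if $m_v\geq 2$, then a generic hyperplane section would have at least two branches whose strict transforms meet $E_v$'' is not correct: $m_v$ is the order of vanishing of $h\circ\pi$ along $E_v$, not a branch count. What $m_v\geq 2$ actually implies is that any branch of $h^{-1}(0)$ whose strict transform is a curvette of $E_v$ has multiplicity $m_v\geq 2$, hence is singular. But a singular curve inside $(X,0)$ does not immediately contradict LNE of $(X,0)$, because the inner metric of $X$ restricted to the curve dominates the curve's own inner metric. The paper instead invokes the nontrivial result of Fernandes--Sampaio that for an LNE normal surface germ a generic hyperplane section is a union of \emph{smooth} curves, and then derives both $m_v=1$ at $\cal L$-nodes and the factorization through $\mathrm{Bl}_0X$ from that (Proposition~\ref{prop:reduced tangent cone}). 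Your equivalence ``$m_v=1$ at every $\cal L$-node $\Leftrightarrow$ $\pi$ factors through $\mathrm{Bl}_0X$'' is also not justified; the paper gives a separate increasing-multiplicity argument for the factorization.

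\textbf{Part (iii).} The paper does \emph{not} argue by constructing test arcs along geodesics. Instead it passes through a generic projection $\ell\colon (X,0)\to(\C^2,0)$: one first proves the key Lemma~\ref{lem:projections_LNE}\ref{lem:projections_LNE_multiplicities}, namely $m_v=m_{\widetilde\ell(v)}$ for every divisorial point of $\Gamma_\pi$ (this is where the LNE hypothesis is used most delicately, via a contact argument comparing two generic projections and Lemma~\ref{lem:generic projection}). From multiplicity preservation one deduces that $\widetilde\ell$ preserves edge lengths in the metric $d$, and since $q$ commutes with $\widetilde\ell$ and the formula $q_w=d(w,w_0)+1$ is already known on $\NL(\C^2,0)$, the identity $q_v=d(v,V_{\cal L})+1$ follows. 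Your sketch of a direct argument (``inner distance is controlled by $\sum_i d(v_{i-1},v_i)$, outer distance by $q_v$'') is not a proof: the inner contact between curvettes at $E_v$ is $q_v$ by definition, and it is unclear what role the path plays.

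\textbf{Parts (v)--(vi).} Here your plan has a genuine circularity. You propose to read off from the Laplacian formula on $\Gamma_{\pi'}$ that $p_v>0$ exactly when $v$ has two neighbours with smaller inner rate. But to make that computation you need $|q_v-q_{v'}|=d(v,v')$ on every edge of $\Gamma_{\pi'}$, i.e.\ that $q$ is monotone on each such edge with slope $\pm1$. Part (iii) only gives $q=d(\,\cdot\,,V_{\cal L})+1$, and the distance function to a set on a metric graph can have local maxima in the interior of an edge; ruling this out on $\Gamma_{\pi'}$ is tantamount to knowing that every local maximum of $q$ is a $\cal P$-node, which is essentially (v) again. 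The paper breaks this circle by an independent route: it proves (Lemma~\ref{lem:projections_LNE}\ref{lem:projections_LNE_degree}) that for LNE germs a vertex is a $\cal P$-node iff its local degree satisfies $\deg(v)>1$, and then (Lemma~\ref{lem:degree_inward_edges}) computes $\deg(v)$ as the number of incoming edges (or $l_v$ at an $\cal L$-node), using Proposition~\ref{prop:morphism_graph_local} and the local degree formula from~\cite{BelottodaSilvaFantiniPichon2019}. Only after (v) is established this way does the paper deduce (vi), again using Proposition~\ref{prop:morphism_graph_local} to show that an edge without a $\cal P$-node maps isomorphically under $\widetilde\ell$ and hence satisfies $|q_v-q_{v'}|=d(v,v')$.

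In short, the ingredient your proposal is missing is the entire generic-projection machinery (the map $\widetilde\ell$, Lemma~\ref{lem:generic projection}, Lemma~\ref{lem:projections_LNE}, and Proposition~\ref{prop:morphism_graph_local}), which is where the LNE hypothesis does its real work in parts (iii), (v), and (vi).
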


In particular, we can build from $\Gamma_{\pi}$ the resolution graph $\Gamma_{\pi'}$ of $\pi'$, decorated by arrows corresponding to the components of the polar curve of a generic plane projection $\ell \colon (X,0) \to (\C^2,0)$ and by the inner rate of each vertex.  
Observe also that \ref{thm:main_factorization_blowup} and \ref{thm:main_L-vector} imply that the multiplicity of a LNE normal germ is determined by its topological type.

While the first two parts of the Theorem are quite elementary, the remaining parts rely heavily on a careful study of generic projections of LNE surfaces (see Lemma~\ref{lem:projections_LNE}), building on results from \cite{NeumannPedersenPichon2020a}.
Parts \ref{thm:main_inner_rates} and \ref{thm:main_P-vector} also depend on the study of inner rates of \cite{BelottodaSilvaFantiniPichon2019}, and in particular on the so-called Laplacian Formula of \emph{loc.\ cit.}

\medskip
We then move our attention to the study of the discriminant curve $\Delta$ of a generic plane projection $\ell\colon(X,0)\to(\C^2,0)$ of $(X,0)$.
Our second main result, which generalizes Bondil's results \cite[Theorem 4.1]{Bondil2003}, \cite[Proposition 5.4]{Bondil2016}, can be stated as follows.

\begin{theorem} \label{thm:main2} 
	Let $(X,0)$ be a LNE normal surface germ and let $\pi\colon X_\pi \to X$ be the minimal good resolution of $(X,0)$. 
	Then the dual graph $\Gamma_\pi$ of $\pi$ determines the embedded topological type of the discriminant curve of a generic plane projection $\ell\colon(X,0)\to(\C^2,0)$ of $(X,0)$.
\end{theorem}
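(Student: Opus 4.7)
The plan is to reduce Theorem~\ref{thm:main2} to a purely combinatorial computation on the decorated graph produced by Theorem~\ref{thm:main}, adapting Bondil's strategy from the minimal case to the LNE setting via the inner-rate machinery.

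First, I would apply Theorem~\ref{thm:main} to recover from $\Gamma_\pi$ the dual graph $\Gamma_{\pi'}$ of the minimal good resolution factoring through the Nash transform, together with the multiplicities $m_v$, the $\cal L$-arrows and $\cal P$-arrows, and the inner rate $q_v$ at every vertex. Since the embedded topological type of a plane curve germ is determined by the characteristic Puiseux pairs of each of its branches and by the pairwise intersection multiplicities of distinct branches, the task reduces to reading off these invariants of $\Delta$ from this decorated graph.

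Since $\ell|_\Pi$ is birational onto $\Delta$ for generic $\ell$, the branches of $\Delta$ are in bijection with the $\cal P$-arrows of $\Gamma_{\pi'}$: each branch $\Pi_i$ of the polar curve, whose strict transform crosses a $\cal P$-node $v_i$, maps to a branch $\Delta_i = \ell(\Pi_i)$ of $\Delta$. The next step is to compute, for each such $\Pi_i$: the multiplicity of $\Delta_i$ at $0$ from the local intersection data at $v_i$; the characteristic pairs of $\Delta_i$, encoded by the sequence of inner rates along the unique path from an $\cal L$-node to $v_i$ in $\Gamma_{\pi'}$, each jump in this sequence contributing one characteristic pair; and the intersection multiplicity of two distinct branches $\Delta_i$ and $\Delta_j$, which is a function of the inner rates and multiplicities along the two subpaths meeting at the deepest common ancestor of $v_i$ and $v_j$ in the tree rooted at the $\cal L$-nodes. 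These formulas would be the natural LNE generalisation of Bondil's computations for minimal singularities.

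The essential technical input is Lemma~\ref{lem:projections_LNE}, which controls the geometry of a generic plane projection of an LNE surface near its polar curve. The LNE hypothesis is used crucially here: it forces inner and outer distances on $X$ to be comparable, so contact orders between polar branches computed via inner rates coincide with outer contact orders, which in turn govern the plane-curve contact orders of their images in $\Delta$. The main obstacle I anticipate is the combinatorial bookkeeping needed to recover the contact orders exactly rather than merely up to equisingularity, and in particular to distinguish branches of $\Delta$ that share a characteristic sequence but not their projection coefficients. I expect this to be handled by an induction peeling off one characteristic pair at a time along the paths from $\cal L$-nodes inward, in the spirit of a carousel decomposition.
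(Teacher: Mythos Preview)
Your reduction to Theorem~\ref{thm:main} and the identification of branches of $\Delta$ with $\cal P$-arrows are correct, and the idea that characteristic exponents and contact orders of $\Delta$ should be read off from inner rates on $\Gamma_{\pi'}$ is the right intuition. However, there is a genuine gap: you speak of ``the unique path from an $\cal L$-node to $v_i$'' and of ``the tree rooted at the $\cal L$-nodes'', but $\Gamma_{\pi'}$ is in general \emph{not} a tree and may have several $\cal L$-nodes. The cusp singularity of Appendix~\ref{appendix:example} already illustrates this: its graph is a hexagonal cycle with three $\cal L$-nodes and three $\cal P$-nodes, so neither uniqueness of paths nor a notion of deepest common ancestor makes sense on $\Gamma_{\pi'}$. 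Bondil's argument works for minimal singularities precisely because their resolution graphs are trees; this is what fails to transfer directly.

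The paper's route around this is to pass from $\Gamma_{\pi'}$ to its image under $\widetilde\ell$ inside $\NL(\C^2,0)$, which \emph{is} a tree. The key new ingredients, beyond Lemma~\ref{lem:projections_LNE}, are: Lemma~\ref{lem:projections_LNE_paths}, which characterises when two divisorial points of $\Gamma_\pi$ have the same image under $\widetilde\ell$ purely in terms of inner rates (same rate, and a connecting path in $\Gamma_\pi$ along which the inner rate never drops below it); Proposition~\ref{prop:nodes}, which shows that quotienting the principal part $\Gamma'_\pi$ by the resulting equivalence relation yields exactly the principal part of $\Gamma_{\sigma_\ell}$; and Proposition~\ref{prop:discriminant_resolution}, which identifies $\sigma_\ell$ with the minimal embedded resolution $\sigma_\Delta$ of $\Delta$. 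The Eggers--Wall tree of $\Delta$ is then read off from this quotient via multiplicities (preserved by $\widetilde\ell$, Lemma~\ref{lem:projections_LNE}.\ref{lem:projections_LNE_multiplicities}) and inner rates (preserved by $\widetilde\ell$ as well). This quotient construction is precisely what supplies the tree structure your argument assumes but $\Gamma_{\pi'}$ itself lacks.
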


To be more precise, the embedded topological type of a plane curve can be conveniently encoded in a combinatorial object, its \emph{Eggers--Wall tree}, whose construction will be recalled in Section~\ref{sec:discriminant} (see also \cite[Definition~3.9]{BarrosoPerezPopescu2019}).
We will give a more precise statement of Theorem~\ref{thm:main2} in Theorem~\ref{thm:discriminant_precise}, showing explicitly how to obtain the Eggers--Wall tree of the discriminant curve $\Delta$ of a generic plane projection $\ell\colon(X,0)\to(\C^2,0)$ of $(X,0)$ as the quotient of the graph $\Gamma_{\pi'}$ by a suitable equivalence relation.

\medskip

Part~\ref{thm:main_P-vector} of Theorem~\ref{thm:main} can be thought of as the uniqueness of a solution, within the class of LNE surface singularities, to what we refer to as the problem of \emph{polar exploration} of surface singularities, which asks to determine the possible configurations of arrows of a finite graph that can be realized as polar curves of a complex surface germ $(X,0)$. 
Recall that surface singularities can be resolved either by a sequence of normalized point blowups, following seminal work of Zariski \cite{Zariski1939} from the late nineteen thirties, or by a sequence of normalized Nash transforms, as was done half a century later by Spivakovsky \cite{Spivakovsky1990}. 
The relationship between these two resolution algorithms, and therefore between hyperplane sections and polar curves of a surface singularity, is still quite mysterious, and they seem to be in some sense dual, as was observed by L\^e \cite[\S4.3]{Le2000}.

More precisely, recall that the incidence matrix of the dual graph $\Gamma_\pi$ associated with a good resolution $\pi\colon X_\pi\to X$ of $(X,0)$ is negative definite by a classical result of Mumford \cite[\S1]{Mumford1961}.
Moreover, Grauert \cite{Grauert1962} proved that every weighted graph $\Gamma$ without loops and with negative definite incidence matrix can be realized as dual graph $\Gamma_\pi$ associated with a good resolution of some normal complex surface germ $(X,0)$.
It is well known that the weighted graph $\Gamma_\pi$ determines the topology of $(X,0)$, since $\Gamma_\pi$ is a plumbing graph of the link of $(X,0)$, and conversely, as we have already mentioned, Neumann \cite{Neumann1981} proved that the plumbing graph $\Gamma_\pi$ is determined up to a natural equivalence relation by the topology of the surface germ.
It is thus natural to consider the plumbing graph $\Gamma_\pi$ endowed with an $\cal L$- and a $\cal P$-vector.
From this point of view, our result implies the following statement.

\begin{corollary} \label{cor:exploration}  
	Let $\Gamma$ be a finite connected graph without loops weighted by attaching to each vertex $v$ a genus $g(v)\geq 0$ and a self-intersection $e(v)<0$.
	Then there exists at most one pair $(L,P)$ of vectors $L=(l_v)$ and $P=(p_v)\in (Z_{\geq0})^{V(\Gamma)}$ such that there exist a LNE normal surface singularity $(X,0)$ and a good resolution $\pi$ of $(X,0)$ satisfying
	\[
	(\Gamma,L,P)=(\Gamma_\pi, L_\pi, P_\pi).
	\]
\end{corollary}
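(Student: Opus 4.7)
The plan is to deduce the corollary directly from parts~\ref{thm:main_L-vector}, \ref{thm:main_inner_rates}, and \ref{thm:main_P-vector} of Theorem~\ref{thm:main}: each of these expresses one of $L_\pi$, $(q_v)_v$, and $P_\pi$ as an explicit combinatorial function of the weighted graph $\Gamma_\pi$ alone. First I would observe that both the fundamental cycle $Z_{\min}(\Gamma)$ and the canonical cycle $Z_\Gamma$ depend only on the weighted graph $\Gamma$: the former is the minimal nonzero element of the Lipman cone, cut out purely by the intersection form, and the latter is the unique solution of the linear system $Z_\Gamma \cdot E_v = -E_v^2 + 2g(v) - 2$ for $v \in V(\Gamma)$.

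Assume now that $(X,0)$ is LNE and that $\pi$ is a good resolution of $(X,0)$ with $(\Gamma_\pi, L_\pi, P_\pi) = (\Gamma, L, P)$; I would treat first the case in which $\pi$ is the minimal good resolution. By Theorem~\ref{thm:main}\ref{thm:main_L-vector} one has $Z_{\max}(X,0) = Z_{\min}(\Gamma)$, so the multiplicities $m_v$ and the entries $l_v = -Z_{\min} \cdot E_v$ of $L$ are read off from $\Gamma$; consequently the set $V_{\cal L}$ of $\cal L$-nodes and the distance $d$ on $\Gamma$ (whose edge lengths are $1/\lcm(m_v, m_{v'})$) are combinatorial as well. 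Theorem~\ref{thm:main}\ref{thm:main_inner_rates} then delivers the inner rate $q_v = d(v, V_{\cal L}) + 1$ as a function of $\Gamma$, after which Theorem~\ref{thm:main}\ref{thm:main_P-vector} expresses each $p_v$ solely in terms of the $m_{v'}$, the $q_{v'}$, $Z_{\min}$, and $Z_\Gamma$. Thus, in the minimal case, $(L,P)$ is determined by $\Gamma$.

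To cover an arbitrary good resolution, the plan is to exploit the purely combinatorial nature both of the contraction of bivalent rational $(-1)$-vertices (which produces the minimal graph $\Gamma_{\min}$ from $\Gamma$) and of the rules by which $L_\pi$ and $P_\pi$ transform under its inverse operation, the blowup of a double point of the exceptional divisor. The minimal case then pins down $(L_{\pi_{\min}}, P_{\pi_{\min}})$ from $\Gamma_{\min}$, and undoing the blowups recovers $(L,P)$ uniquely from $\Gamma$.

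I do not expect a serious conceptual obstacle: the corollary is essentially a repackaging of the explicit formulas in Theorem~\ref{thm:main}. The only real care lies in the bookkeeping for the non-minimal case just sketched, where one must verify that the rules governing how $L_\pi$ and $P_\pi$ change under the blowup of a double point of the exceptional divisor are purely combinatorial and independent of any analytic datum.
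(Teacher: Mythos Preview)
Your reduction to Theorem~\ref{thm:main}\ref{thm:main_L-vector}--\ref{thm:main_P-vector} in the case where $\pi$ is the minimal good resolution is exactly the paper's reasoning; the corollary is presented there as an immediate consequence of Theorem~\ref{thm:main}, and you have spelled out precisely why.

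Your treatment of the non-minimal case, however, has a small but genuine gap. You propose to reach $\Gamma_{\min}$ by contracting \emph{bivalent} rational $(-1)$-vertices and to recover $(L_\pi,P_\pi)$ by undoing blowups of \emph{double points}. But a good resolution $\pi$ dominating the minimal one may also involve blowups at \emph{smooth} (free) points of the exceptional divisor, which produce \emph{univalent} rational $(-1)$-vertices; your contraction/blowup bookkeeping omits these entirely, so as written it neither reaches the minimal graph nor reconstructs $(L_\pi,P_\pi)$ in general. The fix is available---Theorem~\ref{thm:main}\ref{thm:main_Nash_transform} tells you that base points of the generic polar family on $X_{\pi_{\min}}$ are all double points, so a free-point blowup always has $p_w=0$---but you would still need to say this.

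There is a cleaner route that avoids the bookkeeping altogether and is closer in spirit to how the paper treats the corollary. All three ingredients you invoke already hold on \emph{any} good resolution, not just the minimal one: both $Z_{\max}$ and $Z_{\min}$ pull back compatibly under a point blowup, so $Z_{\max}(X,0)=Z_{\min}(\Gamma_\pi)$ on every $\Gamma_\pi$ once it holds on the minimal one, and hence $L_\pi=-I_{\Gamma_\pi}\cdot Z_{\min}$ and the multiplicities $m_v$ are determined by $\Gamma_\pi$; Proposition~\ref{prop:inner_rate_LNE} gives $q_v=d(v,V_{\cal L})+1$ for \emph{every} divisorial point of $\NL(X,0)$, in particular for every vertex of $\Gamma_\pi$; and the Laplacian formula~\eqref{equation:laplacian_formula_effective} is stated for an arbitrary good resolution, so $P_\pi=K_\pi+L_\pi-I_{\Gamma_\pi}\cdot A_\pi$ is determined by $\Gamma_\pi$. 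This gives the corollary for all good resolutions at once.
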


Observe that not all weighted graphs can be realized as resolution graph of a LNE surface germ.
For instance, one topological restriction comes from parts~\ref{thm:main_factorization_blowup} and \ref{thm:main_L-vector} of Theorem~\ref{thm:main}: if $\Gamma$ is the resolution graph of a LNE normal surface germ, then for every vertex $v$ of $\Gamma$ such that $E_v\cdot Z_{\min}<0$ the component $E_v$ has to have multiplicity one in $Z_{\min}$.

\subsection*{Acknowledgments}
We thank Patrick Popescu-Pampu and Bernard Teissier for interesting discussions about quasi-ordinary singularities and generic plane projections, and Javier Fern\'andez de Bobadilla for giving us an argument that simplified the proof of Proposition~\ref{proposition:appendix_smooth_transverse}.
We would also like to thank Camille Le Van and Delphine Menard for fruitful conversations.
This work has been partially supported by the project \emph{Lipschitz geometry of singularities (LISA)} of the \emph{Agence Nationale de la Recherche} (project ANR-17-CE40-0023) and by the 
\emph{PEPS--JCJC M\'etriques singuli\`eres, valuations et g\'eom\'etrie Lipschitz des vari\'et\'es} of the \emph{Institut National des Sciences Math\'ematiques et de leurs Interactions} of the \emph{Centre National de la Recherche Scientifique}. 
The second author has also been supported by a \emph{Research Fellowship} of the \emph{Alexander von Humboldt Foundation}.


\section{Surface germs with unique $\mathcal L$-vector}
\label{sec:Zmin=Zmax}

In this section we prove parts \ref{thm:main_factorization_blowup} and \ref{thm:main_L-vector} of Theorem~\ref{thm:main}.
More generally, we are interested in finding a suitable geometric condition yielding a class of complex surfaces $(X,0)$ whose $\cal L$-vector is completely determined by the topology of a resolution.
In order to achieve this, we recall the precise definitions of the divisors $Z_{\max}(X,0)$ and $Z_{\min}$ that have been mentioned in the introduction, and determine a condition that guarantees their equality.

\medskip

We begin by recalling the notion of Lipman cone.
A more thorough discussion of the objects described in this section can be found in \cite{Nemethi1999}.
Let $\Gamma$ be a finite connected graph without loops and such that each vertex $v\in V(\Gamma)$ is weighted by two integers $g(v)\geq0$, called genus, and $e(v)$, called self-intersection.
We assume that the incidence matrix induced by the self-intersections of the vertices of $\Gamma$, that is the matrix $I_\Gamma\in \Z^{V(\Gamma)}$ whose $(v,v')$-th entry is $e(v)$ if $v=v'$, and the number of edges of $\Gamma$ connecting $v$ to $v'$ otherwise, is negative definite.	
Let $E=\bigcup_{v\in V(\Gamma)}E_v$ be a configuration of curves whose dual graph is $\Gamma$, so that $I_{\Gamma} = (E_v \cdot E_{v'})$, and consider the free additive group $\cal G$ generated by the irreducible components of $E$, that is 
\[
\cal G = \bigg\{D =\sum_{v \in V(\Gamma)} m_v E_v \,\bigg|\, m_v \in \Z\bigg\}. 
\]
By a slight abuse of notation, we refer to the elements of $\cal G$ as \emph{divisors on $\Gamma$}.
On $\cal G$ there is a natural intersection pairing $D\cdot D'$, described by the incidence matrix $I_{\Gamma}$, and a natural partial ordering given by setting $\sum m_v E_v \leq \sum m'_v E_v$ if an only if $ m_v \leq m'_v$ for every $v \in V(\Gamma)$. 

The {\it Lipman cone} of $\Gamma$ is the semi-group $\cal E^+$ of $\cal G$  defined as 
\[
\cal E^+ = \big\{D \in \cal G \,\big|\, D \cdot E_v \leq 0  \text{ for all } v \in V(\Gamma) \big\}.
\]

\begin{remark}
	By looking at the coefficients of a divisor we can identify $\cal G$ with the additive group $\Z^{V(\Gamma)}$.
	Then the Lipman cone $\cal E^+$ of $\Gamma$ is naturally identified with the cone $\Z_{\geq0}^{V(\Gamma)}\cap-I_\Gamma^{-1}\big(\Q_{\geq0}^{V(\Gamma)}\big)$, since by definition a divisor $\sum m_vE_v$ belongs to $\cal E^+$ if and only if the vector $I_\Gamma\cdot(m_v)_{v\in V(\Gamma)}$ belongs to $\Z_{\leq0}^{V(\Gamma)}$.
\end{remark}

A cardinal property of the Lipman cone $\cal E^+$, proven in \cite[Proposition 2]{Artin1966}, is that it has a unique nonzero minimal element $Z_{\min}$, called the {\it fundamental cycle} of $\Gamma$, and that moreover $Z_{\min}>0$, that is the coefficients of $Z_{\min}$ are all positive.
Observe that the existence of the fundamental cycle and the fact that $Z_{\min}>0$ are equivalent to the fact that $D>0$ for every nonzero divisor $D$ in $\cal E^+$.

Assume from now on that $\Gamma$ is the dual graph of a good resolution of a normal surface singularity $(X,0)$.
Notice that the Lipman cone, and therefore its fundamental cycle, only depend on the graph $\Gamma$, that is on the topology of $(X,0)$, and not on the complex geometry of $(X,0)$; the fundamental cycle $Z_{\min}$ can be explicitly computed from $\Gamma$ by using Laufer's algorithm from \cite[Proposition 4.1]{Laufer1972}.

Consider now a germ of analytic function $f \colon (X, 0) \to (\C, 0)$. 
The \emph{total transform} of $f$ by $\pi$  is the divisor $(f) = (f)_{\Gamma} + f^*$ on $X_\pi$, where $f^*$ is the strict transform of $f$ and $(f)_{\Gamma} = \sum_{v \in V(\Gamma)} m_v(f) E_v$ is the divisor supported on $E$ such that $m_v(f)$ is the multiplicity of $f \circ \pi$ along $E_v$. 
By \cite[Theorem 2.6]{Laufer1971}, we have 
\begin{equation}\label{eq:IdentityTotalTransform}
(f) \cdot E_v=0 \quad \text{for all }v \in V(\Gamma).
\end{equation}
In particular, $(f)_\Gamma$ belongs to the Lipman cone $\cal E^+$ of $\Gamma$, and therefore the semi-group $\cal A_X^+ = \{ (f)_{\Gamma} \;|\; f \in \cal \cal O_{(X,0)}\}$\label{def:A+} of $\cal G$ is contained in $\cal E^+$; it has a unique nonzero minimal element $Z_{\max}(X,0)$, which is called the {\it maximal ideal divisor} of $(X,0)$. 
%
%
Observe that the divisor $Z_{\max}(X,0)$ coincides with the cycle $(h)_{\Gamma}$ of a generic linear form $h \colon (X,0) \to (\C,0)$, and that by the definition of the fundamental cycle we have $Z_{\min} \leq  Z_{\max}(X,0)$.

The following proposition is the main result of this section.

\begin{proposition} \label{prop:reduced tangent cone}
	Let $(X,0)$ be a normal surface singularity and let $\pi \colon (X_{\pi},E) \to (X,0)$ be the minimal good resolution of $(X,0)$.
	If a generic hyperplane section of $(X,0)$ is a union of smooth curves, then:
	\begin{enumerate}
		\item $\pi$ factors through the blowup of the maximal ideal of $(X,0)$ and all $\cal L$-nodes have multiplicity one;
		\item the maximal ideal divisor $Z_{\max}(X,0)$ of $(X,0)$ coincides with the fundamental cycle $Z_{\min}$ of $\Gamma_\pi$.
	\end{enumerate}
\end{proposition}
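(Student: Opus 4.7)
The plan is to prove part~(1) first---which requires the geometric input of the hypothesis---and then to deduce part~(2) from it by a short cycle-theoretic argument.

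For part~(1), let $h\colon(X,0)\to(\C,0)$ be a generic linear form with $h^{-1}(0) = \gamma_1\cup\cdots\cup\gamma_k$ and each $\gamma_i$ smooth. I would first establish the geometric fact that each strict transform $\tilde\gamma_i$ on $X_\pi$ is smooth and meets $E$ transversally at a single smooth point of $E$, and that the $\tilde\gamma_i$ are pairwise disjoint on $E$. Given this, $\pi$ factors through $\mathrm{Bl}_0X$ by the universal property of the blowup, since $\pi^{*}\mathfrak m$ turns out to be principal at every point $p$ of $E$: at a smooth point of $E_v$ the ideal is generated by $u^{m_v}$ (with $u$ a local equation of $E_v$), using either $h$ directly or another generic section whose strict transform avoids $p$; at a double point $E_v\cap E_w$ (through which no strict transform passes, by the geometric input), it is generated by $u^{m_v}v^{m_w}$. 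With factorization established, $l_v$ equals the number of $\tilde\gamma_i$ meeting $E_v$, so $\sum_v l_v = k$; since each $\gamma_i$ is smooth we have $\mult(X,0) = k$; and the standard formula $\mult(X,0) = -Z_{\max}^2 = \sum_v m_v l_v$ combines with this to yield $\sum_v (m_v-1) l_v = 0$. Since $m_v\geq 1$ and $l_v\geq 0$, this forces $m_v = 1$ at every $\cal L$-node.

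For part~(2), set $D := Z_{\max}-Z_{\min}$, which is effective since $Z_{\min}$ is the unique minimal nonzero element of $\cal E^+$ and $Z_{\max}\in\cal E^+$. At every $\cal L$-node $v$ one has $1\leq m_v(Z_{\min})\leq m_v(Z_{\max}) = 1$ (from Artin's positivity $Z_{\min}>0$, part~(1), and $D\geq 0$), hence $m_v(D) = 0$; thus $D$ is supported on the non-$\cal L$-nodes. For any non-$\cal L$-node $v$, $Z_{\max}\cdot E_v = -l_v = 0$ while $Z_{\min}\cdot E_v\leq 0$, so $D\cdot E_v = -Z_{\min}\cdot E_v\geq 0$. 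Therefore
\[
D^2 \;=\; \sum_{v\in V(\Gamma_\pi)} m_v(D)\,D\cdot E_v \;\geq\; 0,
\]
while negative definiteness of the intersection form on $E$ forces $D^2\leq 0$, with equality if and only if $D = 0$. Hence $D = 0$ and $Z_{\max} = Z_{\min}$.

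The main obstacle I anticipate is precisely the geometric input in part~(1): showing that each smooth branch $\gamma_i$ has a smooth strict transform $\tilde\gamma_i$ on $X_\pi$ meeting $E$ transversally at a single smooth point, and that for generic $h$ the strict transforms $\tilde\gamma_i$ avoid the finite set of double points of $E$. Smoothness of $\tilde\gamma_i$ should follow from the birational identification $\tilde\gamma_i\to\gamma_i$ with a smooth curve together with a local regularity argument on the smooth surface $X_\pi$, while the avoidance of double points should be secured by a dimension count in the parameter space of the linear system $|\mathfrak m|$, since passing through any fixed point of $E$ is a closed condition on $h$. I would expect this step to invoke a structural result about strict transforms of smooth curves on resolutions of normal surface singularities, possibly the proposition in Appendix~A attributed to Fern\'andez de Bobadilla.
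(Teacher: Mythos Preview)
Your argument for part~(2) is correct and essentially matches the paper's: both hinge on $D=Z_{\max}-Z_{\min}\geq0$, the vanishing of the $E_v$-coefficient of $D$ at every $\cal L$-node (from $0<\widetilde m_v\le m_v=1$), and the inequality $D\cdot E_v\geq0$ at non-$\cal L$-nodes; the paper phrases the conclusion via the Lipman cone, you via $D^2\geq0$ and negative definiteness, but the content is identical.

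Part~(1), however, has a genuine gap, and it is exactly the one you flag as the ``main obstacle''. Your proposed justifications do not work. The dimension count in $|\mathfrak m|$ cannot rule out that some strict transform $\tilde\gamma_i$ meets a double point of $E$ or is tangent to a component: if $\pi^*\mathfrak m$ has a base point on $X_\pi$, \emph{every} hyperplane section---generic or not---passes through it, so genericity buys nothing. Appendix~A goes in the wrong direction: it \emph{assumes} that $\pi$ factors through $\mathrm{Bl}_0X$ and \emph{concludes} the curvette property, so invoking it here is circular. Likewise, the identity $\mult(X,0)=-Z_{\max}^2$ that feeds your equation $\sum_v(m_v-1)l_v=0$ already requires that factorization.

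The paper breaks this circularity by reversing the order of the two conclusions in~(1). It first passes to $\pi'$, the minimal good resolution that \emph{does} factor through $\mathrm{Bl}_0X$ (which exists a~priori). On $X_{\pi'}$ the curvette property holds, and since the image of a curvette of $E_v$ has multiplicity $m_v$, the smoothness hypothesis on the $\gamma_i$ forces $m_v=1$ at every $\cal L$-node of $\Gamma_{\pi'}$. Only then does one show $\pi'=\pi$: if $\pi'=\pi\circ\alpha$ with $\alpha$ nontrivial, the first blowup in $\alpha$ (at a base point of $|\mathfrak m|$ on $X_\pi$) creates a component of multiplicity $>1$, and iterating one finds an $\cal L$-node of $\Gamma_{\pi'}$ with $m>1$, a contradiction. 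If you reorganize your argument along these lines---prove multiplicity~$1$ on $\pi'$ first, then descend---the rest of your write-up goes through.
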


\begin{proof}
	Let $\pi' \colon X_{\pi'} \to X$ be the minimal good resolution of $(X,0)$ which factors through the blowup of its maximal ideal and let $E_v$ be a component of $(\pi')^{-1}(0)$. 
	Let $\gamma^*$ be a  curvette of  $E_v$, that is a smooth complex curve germ intersecting transversely $E_v$ at a smooth point of $(\pi')^{-1}(0)$, and let $h' \colon (X,0) \to (\C,0)$ be a generic linear form of $(X,0)$ such that the strict transform of $(h')^{-1}(0)$ via $\pi'$ does not pass through the point $p=\gamma^* \cap E_v$. 
	Then the multiplicity  $\mult(\gamma,0)$ of $\gamma=\pi'(\gamma^*)$ at $0$ can be computed as the intersection multiplicity of $\gamma$  with a Milnor fiber $\{h=t\}$ of $h$ in a small neighborhood of $0$.  
	Let us choose local coordinates $(u,v)$ centered at $p$ such that $u=0$ is a local equation for $E_v$ and $v=0$ a local equation for $\gamma^*$.
	Then by definition of $m_v$ we have $ (h' \circ \pi) (u,v) = u^{m_v} \alpha(u,v)$ where $\alpha(u,v)$ is a unity in $\C\{u,v\}$, and therefore $m_v  =  \mult(\gamma,0)$.
	
	If $v$ is an $\cal L$-node of $(X,0)$ and $h \colon (X,0) \to (\C,0)$ is a generic linear form of $(X,0)$, so that $h^{-1}(0)$ is a generic hyperplane section of $(X,0)$, then there exists an irreducible component $\gamma$ of $h^{-1}(0)$ whose strict transform $\gamma^*$ by $\pi'$ intersects $E_v$.  
	By hypothesis the curve $\gamma$ is smooth, therefore it has multiplicity $1$ and $\gamma^*$ is a curvette of $E_v$.
	This proves that $m_v=1$.
	
	Assume now  that  $\pi$ does not factor through the blowup of the maximal ideal, so that $\pi'= \pi \circ \alpha$, where $\alpha$ is a finite composition of point blowups.
	By minimality of $\pi'$ there exists an $\cal L$-node $v_0$ of $(X,0)$ which is associated with the exceptional component of one of the point blowups in $\alpha$.
	Let $\alpha_1$ be the first blowup in the sequence $\alpha$, that is, $\alpha_1$ is the blowup of $X_\pi$ at a point $p$ of $E_v \cap h^*$, where $E_v$ is a component of $\pi^{-1}(0)$, and let $E_{w}$ be the exceptional curve of $\alpha_1$.
	Since $h^*$ passes through $p$, we have $m_w=m_{w}(h) > m_v(h)\geq1$. 
	As this argument can be repeated for every blowup forming $\alpha$, we deduce that $m_{v_0}(h)>1$ as well, contradicting the first part of the proof. 
	This implies that $\alpha$ must be an isomorphism, proving $(i)$.

	To prove $(ii)$, write $Z_{\max}=Z_{\max}(X,0)=\sum_{v\in V(\Gamma)}m_vE_v$ and $Z_{\min}=\sum_{v\in V(\Gamma)}\widetilde m_vE_v$, and for every $v$ in $V(\Gamma)$ consider the non-negative integers
	\[
	l_v=-Z_{\max}\cdot E_v
	\,\,\,\,\,\,\text{ and }\,\,\,\,\,\,
	{\widetilde{l}_v}=-Z_{\min}\cdot E_v\,.
	\]
	Since $Z_{\min}\leq Z_{\max}$ by definition of $Z_{\min}$, it is enough to prove that $Z_{\min}\geq Z_{\max}$. 
	Since $I_\Gamma$ is negative definite, it is therefore sufficient to show that the integer $(Z_{\min}-Z_{\max})\cdot E_v =  l_v-{\widetilde{l}_v} $ is  at most zero for every vertex $v$ of $\Gamma$. 
	Whenever $l_v = 0$, this follows immediately from the definition, so let us fix a vertex $v$ such that $l_v >0$. 
	From part $(i)$, we know that $m_v=1$. It follows from the inequality $0<\widetilde{m}_v \leq m_v =1$ that $\widetilde{m}_v =1$ as well. 
	We therefore get:
	\[
	l_v - {\widetilde{l}}_v = (Z_{\min}-Z_{\max})\cdot E_v = \sum_{w\in V(\Gamma)}(\widetilde m_{w}-m_w)E_w\cdot E_v = \sum_{w\neq v}(\widetilde m_{w}-m_w)E_w\cdot E_v  \leq 0,
	\]
	since $E_w\cdot E_v \geq 0$ whenever $w\neq v$ and $\widetilde m_{w} \leq m_w$ at all vertices.
\end{proof}

The hypothesis of Proposition~\ref{prop:reduced tangent cone} is quite weak, as it is satisfied by every normal surface germ with \emph{reduced tangent cone} (in which case the components of a generic hyperplane section are not only smooth but also transverse, see for example \cite[\S1]{GonzalezLejeune1997}), for example by every minimal surface singularity.
More generally, the hypothesis holds for all LNE surface germs, as was proven in \cite[Theorem~3.10]{FernandesSampaio2019}.
In particular, the proposition implies parts \ref{thm:main_factorization_blowup} and \ref{thm:main_L-vector} of Theorem~\ref{thm:main}.

	\medskip

	Observe that it follows by equation \eqref{eq:IdentityTotalTransform} that the vector $-I_{\Gamma_\pi}\cdot Z_{\max}(X,0)$ of $\Z^{V(\Gamma_\pi)}_{\geq0}$ coincides with the $\cal L$-vector $L_\pi$ of $(X,0)$ considered in the introduction.
	Therefore, whenever $Z_{\max}(X,0)$ is determined by the topological type of $(X,0)$, the same holds true for $L_\pi$.
	We collect this result, which is the first step towards the proof of Corollary~\ref{cor:exploration}, in the following corollary.

\begin{corollary}
	Let $\Gamma$ be a weighted graph.
	Then there exists at most one vector $L\in \Z^{V(\Gamma)}$ such that there exist a normal surface germ $(X,0)$ whose generic hyperplane section is a union of smooth curves and a good resolution $\pi\colon X_\pi \to (X,0)$ of $(X,0)$ satisfying $(\Gamma,L)=(\Gamma_\pi,L_\pi)$.
\end{corollary}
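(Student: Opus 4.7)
The plan is to combine Proposition~\ref{prop:reduced tangent cone} with the identification $L_\pi = -I_{\Gamma_\pi}\cdot Z_{\max}(X,0)$ recalled in the paragraph immediately preceding the statement, and to observe that both factors on the right-hand side are purely combinatorial invariants of $\Gamma$ under the stated hypothesis.

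More concretely, suppose $(X,0)$ and $\pi\colon X_\pi\to X$ satisfy the hypotheses, with $(\Gamma_\pi,L_\pi)=(\Gamma,L)$. Since a generic hyperplane section of $(X,0)$ is a union of smooth curves, Proposition~\ref{prop:reduced tangent cone}(ii) gives $Z_{\max}(X,0)=Z_{\min}$, where $Z_{\min}$ is the fundamental cycle of $\Gamma_\pi=\Gamma$. The fundamental cycle is the unique nonzero minimal element of the Lipman cone $\cal E^+$, which is defined solely in terms of the intersection matrix $I_\Gamma$ of the weighted graph $\Gamma$ (it can, for instance, be computed by Laufer's algorithm as recalled in the section); in particular, $Z_{\min}$ does not depend on the analytic data $(X,0)$ or on the resolution $\pi$. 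From the identity
\[
L_\pi = -I_{\Gamma_\pi}\cdot Z_{\max}(X,0) = -I_\Gamma\cdot Z_{\min},
\]
we conclude that $L$ is entirely determined by $\Gamma$. Hence any two admissible pairs produce the same vector $L$, which is exactly the uniqueness asserted.

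There is essentially no obstacle: the corollary is a direct packaging of Proposition~\ref{prop:reduced tangent cone}(ii) together with the formula $l_v=-Z_{\max}(X,0)\cdot E_v$ from the definition of the $\cal L$-vector in the introduction. The only point that warrants a short line of justification is the reminder that $Z_{\min}$ is a combinatorial invariant of the weighted graph, which is standard but worth stating explicitly since it is the crux of why the analytic information collapses to topological information.
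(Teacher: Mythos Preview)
Your proof is correct and matches the paper's approach exactly: the corollary is stated immediately after the paragraph deriving $L_\pi=-I_{\Gamma_\pi}\cdot Z_{\max}(X,0)$ from equation~\eqref{eq:IdentityTotalTransform}, and the paper treats it as a direct consequence of this identity together with Proposition~\ref{prop:reduced tangent cone}(ii), just as you do. Your explicit remark that $Z_{\min}$ is a purely combinatorial invariant of $\Gamma$ is the only point the paper leaves implicit, and it is a helpful addition.
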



\section{A lemma on generic projections} \label{sec:lemma generic}

In this section we will introduce three notions that will prove fundamental in the remaining part of the paper, namely generic projections, non-archimedean links, and local degrees.
We will also prove an important result, Lemma~\ref{lem:generic projection}, that shows the compatibility of generic projections with minimal resolutions.

\medskip

We begin by discussing the notion of generic projection, which is based on seminal work of Teissier.
Fix an embedding of $(X,0)$ in a smooth germ $(\C^n,0)$, and consider the morphism $\ell_{\cal D}\colon(X,0)\to(\C^2,0)$ obtained as the restriction to $X$ of the projection along an $(n-2)$-dimensional linear subspace $\cal D$ of $\C^n$. 
Recall that whenever $\ell_{\cal D}$ is finite, the associated \emph{polar curve} $\Pi_{\cal D}$ is the closure in $(X,0)$ of the ramification locus of the restriction of $\ell_{\cal D}$ to $X\setminus\{0\}$, and the associated \emph{discriminant curve} is the plane curve $\Delta_{\cal C}=\ell_{\cal D}(\Pi_{\cal D})$. 
The Grassmannian variety $\Gr(n-2,\C^n)$ of $(n-2)$-planes in $\C^n$ contains an analytic dense open subset $\Omega$ such that for every $\cal D$ in $\Omega$ the projection $\ell_{\cal D}$ is finite and the families $\{\Pi_{\cal D}\}_{\cal D\in\Omega}$ and $\{\Delta_{\cal D}\}_{\cal D\in\Omega}$ are both well behaved (for example, they are equisingular in a strong sense).
We say that a morphism $\ell\colon (X,0)\to(\C^2,0)$ is a \emph{generic projection} of $(X,0)$ if $\ell=\ell_{\cal D}$ for some $\cal D$ in $\Omega$.
A discussion of the properties satisfied by a generic projection, leading to a precise definition of $\Omega$, can be found in \cite[\S2]{NeumannPedersenPichon2020a}, building on work of Teissier (see in particular \cite[Lemme-cl\'e V 1.2.2]{Teissier1982}); we will come back to this matter later in this section.

\medskip

We now recall the definition of the non-archimedean link $\NL(X,0)$ of the germ $(X,0)$.
Indeed, our goal for this section is to study the map induced by a generic projection $\ell\colon (X,0)\to(\C^2,0)$ on the dual graph of a good resolution of $(X,0)$.
In principle, for this to make sense it is necessary to chose a suitable good resolution $\pi\colon X_\pi\to X$ of $(X,0)$ and a compatible sequence of point blowups $\sigma\colon Y_\sigma\to\C^2$ of $(\C^2,0)$ in order for $\ell$ to induce a map $|\Gamma_\pi|\to|\Gamma_\sigma|$ between the topological spaces underlying $\Gamma_\pi$ and $\Gamma_\sigma$. 
In this paper we will use $\NL(X,0)$ as a convenient way of encoding intrinsically all the dual graphs of good resolutions of $(X,0)$; for this purpose, we can adopt the following ad hoc definition.
Recall that, if $\pi \colon X_\pi \to X$ and $\pi' \colon X_{\pi'} \to X$ are two good resolutions of $(X,0)$ such that $\pi'$ dominates $\pi$ (that is, $\pi'$ factors through $\pi$), then we have a natural inclusion $|\Gamma_\pi|\hookrightarrow |\Gamma_{\pi'}|$ between the topological spaces underlying the dual graphs $\Gamma_\pi$ and $\Gamma_{\pi'}$, and a retraction $|\Gamma_{\pi'}|\to |\Gamma_\pi|$ obtained by contracting the trees in $|\Gamma_{\pi'}|\setminus |\Gamma_{\pi}|$.
The non-archimedean link can then be seen as the inverse limit $\NL(X,0)=\varprojlim_{\pi}|\Gamma_\pi|$ in the category of topological spaces and with respect to the various retraction morphisms, where the limit runs over the poset of good resolutions of $(X,0)$, ordered by domination.
In particular, $\NL(X,0)$ contains a copy the dual graph of each good resolution of $(X,0)$, and it can be seen as a compactification of the infinite union $\bigcup_\pi|\Gamma_\pi|$ of all the dual graphs of the good resolutions of $(X,0)$.
As such, it can be thought of as a universal dual graph of the singularity $(X,0)$.
To unburden the notation, in the remaining part of the paper we will usually identify a dual graph $\Gamma_\pi$ with its image $|\Gamma_\pi|$ in $\NL(X,0)$.
This point of view makes it convenient to think of $\cal L$- and $\cal P$-nodes abstractly as points of $\NL(X,0)$.

Traditionally, the non-archimedean link $\NL(X,0)$ is built as a space of normalized semivaluations on the complete local ring $\widehat{\cal O_{X,0}}$ of $X$ at $0$.
In particular, if $\pi\colon X_\pi\to X$ is a good resolution of $(X,0)$ and $E_v$ is a component of its exceptional divisor $\pi^{-1}(0)$, the corresponding vertex of $\Gamma_\pi$ is identified with the corresponding \emph{divisorial valuation} $v\colon \widehat{\cal O_{X,0}}\to\R_+\cup\{+\infty\}$ defined by $v(f)=\ord_{E_v}(\pi^*f)/m_v$, where $\ord_{E_v}(\pi^*f)$ denotes the order of vanishing along $E_v$ of the pullback of $f$ via $\pi$.
Throughout the paper, we will freely make use of this terminology, calling \emph{divisorial point} of $\NL(X,0)$ (or of a given dual graph $\Gamma_\pi$) any point that can arise in this way, and denoting by $E_v$ any exceptional curve corresponding to a divisorial point $v$.
Observe that the subset of $\NL(X,0)$ consisting of its divisorial points is dense in the non-archimedean link; this corresponds to the fact that any given dual graph $\Gamma_\pi$ can be refined \emph{ad infinitum} by passing to resolutions dominating $\pi$, subdividing each edge $e=[v,v']$ into smaller edges by successively blowing up double points starting with the blowup of $X_\pi$ at the point of $E_v\cap {E_{v'}} $ that corresponds to $e$.
In particular, a divisorial point of $\NL(X,0)$ is contained in the interior of $e$ if and only if it is associated with an exceptional component that appears after blowing up only double points as above.
We refer the reader to \cite[\S2.1]{BelottodaSilvaFantiniPichon2019} and \cite{Fantini2018} for further details on this point of view.

\medskip

The morphism $\ell\colon (X,0)\to (\C^2,0)$ induces a natural map $\widetilde\ell\colon\NL(X,0)\to\NL(\C^2,0)$.
From the point of view of semivaluations, this is simply defined functorially by pre-composing a semivaluation on $\widehat{\cal O_{X,0}}$ with the morphism of complete local rings $\widehat{\cal O_{\C^2,0}} \to \widehat{\cal O_{X,0}}$ induced by $\ell$.

Concretely, $\widetilde\ell(v)$ can also be computed explicitly on a divisorial point $v$ of $\NL(X,0)$ as follows: we can find a sequence of point blowups $\sigma_{\ell,v}\colon Y_{\sigma_{\ell,v}} \to \C^2$ of $(\C^2,0)$ and a good resolution $\pi_{\ell,v}\colon X_{\pi_{\ell,v}}\to X$ of $(X,0)$ such that $v$ corresponds to a component $E_v$ of the exceptional divisor of $\pi_{\ell,v}$, the composition $\ell \circ \pi_{\ell,v} \colon X_{\pi_{\ell,v}} \to \C^2$ factors through a map $\widehat\ell \colon X_{\pi_{\ell,v}} \to Y_{\sigma_{\ell,v}}$ making the following diagram commute
\begin{equation}\label{eqn:diagram_defining_local_degree}
\xymatrix@C=5em{    
	X_{\pi_{\ell,v}}         \ar[d]_{\widehat\ell}    \ar[r]^{\pi_{\ell,v}} & X    \ar[d]^{\ell}           \\
	Y_{\sigma_{\ell,v}}      \ar[r]_{\sigma_{\ell,v}}  &  \C^2
}
\end{equation}
and such that $E_v$ is mapped by $\widehat\ell$ surjectively onto a component $E_w$ of the exceptional divisor of $\sigma_{\ell,v}$; we then have $\widetilde\ell(v)=w$.
	
Let $\pi\colon X_\pi\to X$ be a good resolution of $(X,0)$.
Since $\ell$ is a finite map ramified precisely over the associated polar curve, the induced map $\widetilde\ell|_{\Gamma_\pi}\colon\Gamma_\pi\to\widetilde\ell({\Gamma_\pi})$ is itself a finite cover, which on a set contained in the the set of $\cal P$-nodes of $(X,0)$ that are contained in $\Gamma_\pi$ (not necessarily as vertices but possibly in the interior of some edges).
In particular, $\widetilde\ell$ cannot contract an edge of $\Gamma_\pi$, but it may fold one if it contains a $\cal P$-node in its interior.

\medskip

Observe that the map $\widetilde\ell$ clearly depends on the choice of $\ell$.
Indeed, if $\ell'\colon (X,0)\to(\C^2,0)$ is another generic projection obtained by composing $\ell$ with an automorphism $\phi$ of $(\C^2,0)$, then $\phi$ induces a nontrivial automorphism $\widetilde\varphi$ of $\NL(\C^2,0)$, and we have $\widetilde\ell = \widetilde\varphi \circ \widetilde\ell'$.
While in general two generic projections of $(X,0)$ do not differ by an automorphism of $(\C^2,0)$, it it possible to control this phenomenon if we restrict $\widetilde\ell$ to the dual graph $\Gamma_\pi$ of the minimal good resolution $\pi \colon X_\pi \to X$ of $(X,0)$ that factors through its Nash transform, as we explain in Lemma~\ref{lem:generic projection} below.

In order to do this, we need to dive deeper into the definition of generic projections, to be able to study the polar curves and the discriminant curves of $(X,0)$ in families.
Let us begin by recalling the precise notion of strong equiresolution of singularities given in \cite[3.1.1 and 3.1.5]{Teissier1976}.
Given a morphism $\beta \colon M \to \Lambda$ with reduced fibers between smooth connected complex manifolds and a simple normal crossing divisor $E$ of $M$, we say that $\beta$ is \emph{simple} (with respect to $E$) if $\beta$ is smooth and its restriction $\beta|_E \colon E \to \Lambda$ to $E$ is proper and locally a trivial deformation along its fibers.
If we have another morphism $\sigma \colon M' \to M$, we say that $\sigma$ is \emph{$\beta$-compatible} if the composition $\beta'=\beta \circ \sigma$ is simple (with respect to $E'=\sigma^{-1}(E)$).
Finally, given a (singular) subvariety $X$ of $M$, we say that an embedded resolution of singularities $\pi \colon \widetilde{M} \to M$ of $X$ is a \emph{strong equiresolution} (along $\Lambda$) of $X$ if $\pi$ is $\beta$-compatible and all of its restrictions $\pi_\lambda$ over $\lambda\in\Lambda$ are good embedded resolutions of $X_\lambda$.

According to \cite[Lemme-cl\'e V 1.2.2]{Teissier1982} (see \cite[Proposition~2.3]{NeumannPedersenPichon2020a} for an English presentation), there exists an analytic open dense subset $\Omega$ of the Grassmannian $\Gr(n-2,\C^n)$ where the family $\{(\Delta_{\cal D},\cal D)\}_{\cal D\in\Omega}$ of discriminant curves, which can be seen as a surface in $(\mathbb{C}^2,0)\times \Omega$ fibered over $\Omega$ via the projection $\beta\colon(\mathbb{C}^2,0)\to\Omega$ on the second factor, admits a strong equiresolution
\[
\xymatrix@C=5em{    
	(\mathcal{Y},\cal F)   \ar[r]^{\sigma} \ar[rd]_{ \beta_{\mathcal{Y}} } & \ar[d]^{\beta}   (\C^2,0) \times \Omega  \\
	&\Omega         &  \\
}
\]
with $F=\sigma^{-1}\big(\{0\}\times\Omega\}\big)$ a simple normal crossing divisor of $\mathcal{Y}$. 

For each $\cal D$ in $\Omega$, denote by $\sigma_{\cal D}\colon (\mathcal{Y}_{\cal{D}}, F_{\cal{D}})\to(\C^2,0)$ the restriction of $\sigma$ to the fiber $\beta_{\cal Y}^{-1}(\cal D)$, which is a sequence of point blowups of $(\C^2,0)$. 
Given two elements $\cal D$ and $\cal D'$ of $\Omega$, this allows us to define an isomorphism of graphs $\eta_{\cal D,\cal D'}\colon \Gamma_{\sigma_{\cal D}} \stackrel{\sim}{\longrightarrow} \Gamma_{\sigma_{\cal D'}}$ as follows.
For each $v \in V(\Gamma_{\sigma_{\cal D}})$, if we denote by $F^{\cal D}_{v}$ the corresponding irreducible component of $\cal F=\sigma_{\cal D}^{-1}(0)$, there is a unique irreducible component $\cal F_v^{\cal D}$ of $\sigma^{-1}(\{0\} \times \Omega)$ such that $F^{\cal D}_{v} =  \cal F_v^{\cal D} \cap \sigma_{\cal D}^{-1}(0)$.
We then set $\eta_{\cal D,\cal D'}(v)=v'$, where $v'$ is the vertex of $\Gamma_{\cal D'}$ such that $\cal F_{v'}^{\cal D'}=\cal F_v^{\cal D}$ (that is, equivalently, such that $\cal F_v^{\cal D} \cap \sigma_{\cal D'}^{-1}(0)=F^{\cal D'}_{v'}$).
This yields a bijection $V(\Gamma_{\sigma_{\cal D}}) \to V(\Gamma_{\sigma_{\cal D'}})$ which extends to a natural homeomorphism
\[
{\eta}_{\cal D, \cal D'} \colon \Gamma_{\sigma_{\cal D}} \to  \Gamma_{\sigma_{\cal D'}}
\]
defined on the divisorial points of $\Gamma_{\sigma_{\cal D}}$ as follows. Fix $\cal D \in \Omega$ and consider a divisorial point $v$ on an edge $[v_1,v_2]$ of $\Gamma_{\sigma_{\cal D}}$. 
Then $E^{\cal D}_v$ is created by a finite sequence of blowups of double points of the previous exceptional divisor, starting with the blowup of the point $F_{v_1}^{\cal D} \cap F_{v_2}^{\cal D}$. 
We can perform this blowups in family by blowing up along successive intersections of the form $\cal F_{w_1}^{\cal D} \cap \cal F_{w_2}^{\cal D}$, starting with the blowup along $\cal F_{v_1}^{\cal D} \cap \cal F_{v_2}^{\cal D}$. 
By composing this sequence of blowups with $\sigma$, we obtain a ($\beta$-compatible) morphism $\sigma_v \colon {\cal Y}_{\sigma_v} \to (\C^2,0) \times \Omega$. 
The last blowup creates an irreducible new component $\cal F_v^{\cal D}$ in the exceptional divisor, and as before we define $v' = \eta_{\cal D, \cal D'}(v)$ by declaring that the corresponding irreducible component $F_{v'}^{\cal D'}$ is the intersection $\cal F_v^{\cal D} \cap ({\sigma'})^{-1}(0, \cal D')$.
Observe that, since multiplicities are constant along a smooth family,  we have $m_v=m_{\eta_{\cal D,\cal D'}(v)}$ for every divisorial point $v$ of $\Gamma_{\sigma_{\cal D}}$.

The following lemma relating the graph $\Gamma_{\sigma_{\cal D}}$ to $\Gamma_{\pi}$ plays a crucial role in several arguments in the rest of the paper.

\begin{lemma}  \label{lem:generic projection}
	Let $(X,0)$ be a normal surface singularity, let $\pi\colon X_\pi\to X$ be the minimal good resolution of $(X,0)$ that factors through the blowup of  its maximal ideal and its Nash transform, and let $\Gamma_\pi\subset \NL(X,0)$ be the dual graph of $\pi$. 
	Then for all $\cal D$ and $\cal D'$ in $\Omega$ the diagram
	\[
	\xymatrix{    
		&\Gamma_{\pi}    \ar[ld]_{\widetilde\ell_{\cal D}|_{\Gamma_\pi} }  \ar[rd]^{\widetilde\ell_{\cal D'}|_{\Gamma_\pi}}         &  \\
		\Gamma_{\sigma_{\cal D}}   \ar[rr]_{{\eta}_{\cal D, \cal D'}}  & &   \Gamma_{\sigma_{\cal D'}}
	}
	\]
	obtained by restricting to the graph $\Gamma_{\pi}$ the two induced morphisms of non-archimedean links $\widetilde\ell_{\cal D},\widetilde\ell_{\cal D'} \colon \NL(X,0) \to \NL(\C^2,0)$, is commutative.
\end{lemma}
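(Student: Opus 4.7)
The plan is to establish commutativity by constructing the diagram \eqref{eqn:diagram_defining_local_degree} in family. Since divisorial points are dense in $\Gamma_\pi$ and both morphisms extend continuously, it suffices to verify the equality $\eta_{\cal D,\cal D'}\bigl(\widetilde\ell_{\cal D}(v)\bigr)=\widetilde\ell_{\cal D'}(v)$ for every divisorial point $v$ of $\Gamma_\pi\subset\NL(X,0)$. Such a $v$ corresponds to an irreducible component $E_v$ of the exceptional divisor of some good resolution $\pi_v$ of $(X,0)$ obtained from $\pi$ by a (possibly empty) sequence of blowups of double points; fix such a $\pi_v$.

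The main step is to build a single family diagram lifting \eqref{eqn:diagram_defining_local_degree} simultaneously for all parameters. Consider the finite morphism $\cal L\colon (X,0)\times\Omega\to(\C^2,0)\times\Omega$ defined by $\cal L(x,\cal D)=\bigl(\ell_{\cal D}(x),\cal D\bigr)$ together with the $\beta'$-compatible modification $\pi_v\times\text{id}_\Omega$, where $\beta'\colon (X,0)\times\Omega\to\Omega$ is the second projection. I would produce a $\beta$-compatible modification $\sigma_v\colon\cal Y_v\to(\C^2,0)\times\Omega$ obtained from $\sigma$ by a finite sequence of $\beta$-compatible blowups of intersections of pairs of irreducible components of $\sigma^{-1}(\{0\}\times\Omega)$, such that $\cal L\circ(\pi_v\times\text{id}_\Omega)$ factors as $\sigma_v\circ\widehat{\cal L}$ in a neighborhood of $E_v\times\Omega$, with $\widehat{\cal L}$ mapping $E_v\times\Omega$ surjectively onto a single irreducible component $\cal F_w$ of the exceptional divisor of $\sigma_v$. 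Granting this, restricting the family diagram to the fibers over $\cal D$ and $\cal D'$ yields precisely the diagrams defining $\widetilde\ell_{\cal D}(v)$ and $\widetilde\ell_{\cal D'}(v)$, and the two restrictions $\cal F_w\cap\sigma_{v,\cal D}^{-1}(0)$ and $\cal F_w\cap\sigma_{v,\cal D'}^{-1}(0)$ correspond under $\eta_{\cal D,\cal D'}$ by the very construction of the latter, which extends naturally to the refinement $\sigma_v$ of $\sigma$ since both are obtained by blowups along intersections of components of the total transform of $\{0\}\times\Omega$.

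The main obstacle is the construction of $\sigma_v$, which relies crucially on the hypothesis that $\pi$ (and hence $\pi_v$) factors through the Nash transform. By the description recalled in Appendix~\ref{appendix:A}, the strict transform via $\pi_v$ of the polar curve $\Pi_{\cal D}$ is a disjoint union of smooth curvettes meeting the exceptional divisor transversely at smooth points, for every $\cal D\in\Omega$. Combined with the strong equiresolution of the family $\{\Pi_{\cal D}\}_{\cal D\in\Omega}$ provided by Teissier (analogous to the statement recalled above for the discriminants and proved in the same circle of ideas of \cite[\S V]{Teissier1982}), this shows that the ramification divisor of $\cal L\circ(\pi_v\times\text{id}_\Omega)$ is contained in a simple normal crossing divisor of $X_{\pi_v}\times\Omega$ whose image in $(\C^2,0)\times\Omega$ is a curve already resolved by $\sigma$. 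A standard resolution of indeterminacies argument for rational maps between smooth families, performed in a $\beta$-compatible manner, then produces $\sigma_v$ of the required form, completing the proof.
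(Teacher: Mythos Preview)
Your overall strategy---build diagram~\eqref{eqn:diagram_defining_local_degree} in family over $\Omega$ and then restrict to fibers---is exactly the paper's approach, and reducing to divisorial points by density is fine. The gap is in the construction of the family lift itself.

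You assert that $\cal L\circ(\pi_v\times\mathrm{id}_\Omega)$ factors through some $\sigma_v$ \emph{without modifying the source} $X_{\pi_v}\times\Omega$, and you justify this by saying that a ``standard resolution of indeterminacies argument \ldots\ produces $\sigma_v$''. But resolution of indeterminacies modifies the \emph{source}, not the target; blowing up the target further cannot by itself turn the rational map $X_{\pi_v}\times\Omega\dashrightarrow\cal Y_v$ into a morphism in a neighborhood of $E_v\times\Omega$. Even fiberwise, the diagram~\eqref{eqn:diagram_defining_local_degree} is only asserted to exist for \emph{some} resolution $\pi_{\ell,v}$ dominating $\pi_v$, not for $\pi_v$ itself. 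The paper confronts this directly: it blows up the source $(X_\pi,E)\times\Omega$ to obtain $(\mathcal Z,\cal G)$ admitting a genuine morphism $\Psi$ to $(\mathcal Y,\cal F)$, and then invokes the technical Lemma~\ref{cl:TechBlowingup} to guarantee (after possibly shrinking $\Omega$) that $\beta_{\cal Z}$ remains simple, so that the exceptional components of $\cal G$ are still trivial over $\Omega$. This simplicity of the modified source is precisely what allows one to read off $\widetilde\ell_{\cal D}(v)$ coherently for varying $\cal D$; your argument is missing this step entirely.

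Your appeal to the Nash-transform hypothesis and the equiresolution of the polar family is also somewhat beside the point. Knowing that the ramification locus upstairs is simple normal crossings and that its image downstairs is resolved by $\sigma$ does not by itself produce the lift $\widehat{\cal L}$; the map still contracts the entire exceptional divisor over each fiber, and the lift has to be constructed by an honest (source-side) modification. The paper's proof does not use the polar equiresolution at this stage; it uses only the discriminant equiresolution already built into the definition of $\sigma$, together with Laufer's flattening \cite[Lemma~5.2]{Laufer1971} and the universal property of blowups to manufacture $\alpha$ and $\Psi$.
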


Before moving to the proof of the lemma, which is rather technical, we observe that the homeomorphism ${\eta}_{\cal D, \cal D'} \colon \Gamma_{\sigma_{\cal D}} \to  \Gamma_{\sigma_{\cal D'}}$ lifts naturally to an automorphism ${\eta}_{\cal D, \cal D'}$ of the dual graph $\Gamma_{\pi'}$ of any good resolution $\pi'\colon X_{\pi'}\to X$ of $(X,0)$.
%
%
However, the commutativity ${\eta}_{\cal D, \cal D'}  \circ \widetilde\ell_{\cal D}$ does not necessarily hold on the whole of $\Gamma_{\pi'}$.
We defer an illustration of this phenomenon to Example~\ref{ex:pi_not_minimal_multiplicities}, since showing this now would require a lengthy local computation, while after proving Lemma~\ref{lem:projections_LNE} we can give a more conceptual explanation.

As our needs go slightly beyond what was done by Teissier, let us explain how to adapt his constructions accordingly.
We start by proving a technical lemma about resolution in families of surfaces, much in the spirit of \cite[4.1 and 4.2]{Teissier1976}:

\begin{lemma}\label{cl:TechBlowingup}
	Let $M$ and $\Omega$ be connected complex manifold such that $\mbox{dim}(M) =\mbox{dim}(\Omega)+2$, let $E$ be a simple normal crossing divisor of $M$, and let $\beta: M \to \Omega $ be a simple morphism (with respect to $E)$. 
	Consider a finite sequence of (adapted) smooth blowups $\sigma \colon (M',E') \to (M,E)$ whose centers have codimension at least $2$. 
	Then, up to shrinking the size of the dense open $\Omega$ (and, therefore, of $M$ and $M'$), the composition $\beta'=\beta \circ \sigma$ is simple (with respect to $E'$).
\end{lemma}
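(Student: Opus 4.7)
The plan is to induct on the number of blowups in $\sigma$, so it suffices to treat a single smooth blowup $\sigma \colon M' \to M$ along a smooth center $Z$ of codimension at least $2$ that has normal crossings with $E$. Since $\sigma$ is an isomorphism away from $Z$, simplicity of $\beta'$ with respect to $E'$ at points not on the new exceptional divisor follows from simplicity of $\beta$, so the work is concentrated along $\sigma^{-1}(Z)$.

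I would first dispose of the case where $\beta|_Z \colon Z \to \Omega$ is not dominant: then the analytic closure $\overline{\beta(Z)}$ is a proper analytic subset of $\Omega$, and shrinking $\Omega$ to a dense open contained in $\Omega \setminus \overline{\beta(Z)}$ removes $Z$ from $M$ entirely, making $\sigma$ an isomorphism and reducing the claim to the hypothesis. In the main case $\beta|_Z$ is dominant; since $\dim Z \leq \dim \Omega$, we must have $\dim Z = \dim \Omega$ and $\beta|_Z$ is generically finite. By generic smoothness in the analytic setting, after shrinking $\Omega$ by a proper analytic subset this map becomes a finite étale covering, so $Z$ is a disjoint union of sections $s \colon \Omega \to M$ of $\beta$.

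For each such section and each point $p \in s(\Omega)$, the submersion theorem produces local analytic coordinates $(x_1, x_2, y_1, \ldots, y_m)$ with $m = \dim \Omega$ in which $\beta$ is the projection onto the $y$-coordinates and $s(\Omega) = \{x_1 = x_2 = 0\}$. Using the hypothesis that $\beta|_E$ is locally a trivial deformation, together with the normal crossings of $Z$ with $E$, these coordinates can be arranged so that each local component of $E$ through $p$ is one of $\{x_1 = 0\}$ or $\{x_2 = 0\}$. In such coordinates $\sigma$ becomes locally the product of the blowup of $\C^2$ at the origin with an open subset of $\Omega$, so $\beta'$ is locally a projection (hence smooth), the new exceptional component is a trivial $\Pro^1$-bundle, and the strict transforms of the components of $E$ remain of the form $\{x_i = 0\}$ times the base. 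Combining these, $E'$ is locally a trivial deformation along $\beta'$, and properness of $\beta'|_{E'}$ follows from properness of $\beta|_E$ together with properness of the new exceptional component over $\Omega$.

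The main obstacle I anticipate is the coordinate step: one must simultaneously straighten $\beta$ to a projection, straighten the section $s$ to a coordinate plane, and respect the SNC structure of $E$. The hypothesis that $\beta|_E$ is locally trivial supplies precisely the compatibility needed to make these three choices at once, but verifying this rigorously---particularly at points where several components of $E$ meet, or where a component of $E$ contains $s(\Omega)$---is the most delicate technical step.
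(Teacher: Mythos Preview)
Your proposal is correct and follows essentially the same approach as the paper: reduce to a single blowup, remove the image of the center when $\beta|_Z$ is not dominant, and otherwise shrink $\Omega$ so that $\beta|_Z$ becomes everywhere a local isomorphism, then conclude by a direct coordinate computation showing the blowup is locally a product with the base. The one point where the paper is more careful is in invoking Remmert's proper mapping theorem (via the hypothesis that $\beta|_E$ is proper, and that the center, being adapted, lies in $E$) to guarantee that $\beta(Z)$ and the image of the critical locus of $\beta|_Z$ are genuinely closed analytic subsets of $\Omega$; you should make this explicit rather than passing to an ``analytic closure''. The coordinate step you flag as delicate is handled in the paper exactly as you suggest: once $\beta|_{\mathcal C}\colon \mathcal C\to\Omega$ is a local isomorphism and $\mathcal C$ is adapted to $E$, the local generators $(f_1,f_2)$ of $\mathcal C$ together with the base coordinates $\lambda$ form a coordinate system in which $E\subset\{f_1f_2=0\}$, so the verification is immediate.
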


\begin{proof}
	It is enough to prove the claim in the case that $\sigma$ is a single blowup with center $\mathcal{C}$. 
	By Remmert's Proper Mapping Theorem applied to $\beta|_E$, the image $\beta(\mathcal{C})$ is a closed analytic subset of $\Omega$. 
	If $\mbox{dim}\big(\beta(\cal C)\big) <  \mbox{dim}(\Omega)$, set $Z=\beta(\cal C)$ and note that, once we replace $\Omega$ by $\Omega \setminus Z$, the result easily follows from the fact that $\sigma\colon M' \to M$ is an isomorphism. 
	We can therefore assume that $\mbox{dim}\big(\beta(\mathcal{C})\big) = \mbox{dim}(\Omega)$, so that $\beta(\mathcal{C})=\Omega$. 
	Since $\mbox{dim}(\mathcal{C}) \leq \mbox{dim}(\Omega)$ by hypothesis, we conclude that $\mbox{dim}(\mathcal{C}) = \mbox{dim}(\Omega)$, and in particular the restriction $\beta|_{\mathcal{C}}\colon\mathcal{C} \to \Omega$ is generically a local isomorphism. 
	Let $Y \subset \mathcal{C}$ be the set of critical points of $\beta|_{\mathcal{C}}$, which is a proper closed analytic subset of $\mathcal{C}$. 
	Again by Remmert's Proper Mapping Theorem, the image $Z'=\beta(Y)$ is a closed analytic subset of $\Gr(n-2,\C^n)$, properly contained in $\Gr(n-2,\C^n)$ because $\mbox{dim}(Y)<\mbox{dim}\big(\Gr(n-2,\C^n)\big)$. 
	Now, after replacing $\Omega$ by $\Omega \setminus Z'$, we can assume that $\beta \colon \mathcal{C} \to \Omega$ is everywhere a local isomorphism. 
	We now claim that $\beta'$ is simple via direct computation.
	Indeed, since smoothness can be verified locally, let us fix a point $p \in \mathcal{C}$, and denote by $f_1$ and $f_2 \in \mathcal{O}_{p}$ local generators of $\mathcal{C}$. 
	Since $\beta$ is simple at $p$, there exists an (analytic) local coordinate system $(\lambda,x_1,x_2)$ at $p$ such that $\beta(\lambda,x_1,x_2)=\lambda$ and $E$ is locally contained in $(x_1x_2=0)$. 
	Since $\pi\colon\mathcal{C}\to \Omega$ is a local isomorphism around $p$ and $\mathcal{C}$ is smooth and adapted to $E$, if follows that the map $(\lambda,x_1,x_2) \to (\lambda,f_1,f_2)$ is a local isomorphism and $E \subset (f_1f_2=0)$. 
	Therefore, up to a local change of variables, we can assume that $f_1=x_1$ and $f_2=x_2$, and we easily conclude that $\beta' \colon M' \to \Omega$ is simple.
\end{proof}

Now, recall that we have an embedding of $(X,0)$ in $(\C^n,0)$ and let $\Phi \colon (X,0) \times \Omega \to (\C^2,0) \times \Omega$ be the morphism defined by $\Phi (x,\cal D) = \big(\ell_{\cal D}(x), \cal D\big)$, which is generically of maximal rank. 
Let $\pi\colon (X_{\pi},E) \to (X,0)$ be a good resolution of $(X,0)$ which factors through the blowup of its maximal ideal and through its Nash transform. 
We note that, by using \cite[Lemma 5.2]{Laufer1971} (a special case of the direct image theorem of Grauert), resolution of singularities, and the universal property of blowups, there exists a sequence of blowups $\alpha\colon (\mathcal{Z},\cal G) \to (X_{\pi},E) \times \Omega$ and an analytic morphism $\Psi\colon (\mathcal{Z},\cal G) \to (\mathcal{Y},\cal F)$ such that $\Psi^{-1}(\cal F)_{\mathrm{red}} = \cal G_{\mathrm{red}}$ and the following diagram
\begin{equation}\label{eq:CommutativeDiagram}
\xymatrix@C=3.5em{ (\mathcal{Z},\cal G) \ar[r]^(.41){\alpha} \ar[d]^{{\Psi}}  &(X_{\pi},E) \times \Omega \ar[r]^(.52){\pi \times \mathrm{Id}}      &  (X,0) \times \Omega \ar[d]^{\Phi} & \\
	 (\mathcal{Y},\cal F) \ar[rr]^(.48){\sigma} & &(\C^2,0) \times \Omega \ar[r]^(.63){\beta} & \Omega}
\end{equation}
is commutative, with $\beta_{\cal Y}=\beta\circ\sigma$ simple. 
Thanks to Lemma \ref{cl:TechBlowingup}, up to shrinking the size of the open $\Omega$ if necessary, the morphism $\beta_{\cal Z}=\beta_{\cal Y}\circ \Psi$ is simple as well. 
We are now ready to complete the proof of Lemma \ref{lem:generic projection}.

\begin{proof}[Proof of Lemma \ref{lem:generic projection}]
	The map $\widetilde{\ell }|_{\Gamma_{\pi}}$ is determined by its restriction to the set of divisorial points of $\Gamma_{\pi}$, as those form a dense subset of $\Gamma_{\pi}$. 
	Since $\beta_{\cal Z}$ and $\beta_{\cal Y}$ are simple, for every pair of elements $\cal D, \cal D'$ of $\Omega$, the following diagram commutes
	\[
	\xymatrix{    
		&V(\Gamma_{\pi})    \ar[ld]_{\widetilde\ell_{\cal D} }  \ar[rd]^{\widetilde\ell_{\cal D'}}         &  \\
		V(\Gamma_{\sigma_{\cal D}})   \ar[rr]_{\widetilde{\rho}_{\cal D, \cal D'}}  & &  V( \Gamma_{\sigma_{\cal D'}})
	}
	\]
	We now need to prove the result on the divisorial points of $\Gamma_{\pi}$ which are not vertices of $\Gamma_{\pi}$. 
	It is sufficient to consider the case where $v$ is the divisorial point associated with the exceptional curve of the blowup $\pi'\colon (X_{\pi}',E')\to (X_{\pi},E)$ of center $E_{v_1} \cap E_{v_2}$, since the same argument can then be repeated verbatim for general sequence of point blowups. 
	Observe that if $E_v \times \Omega$ is already a component of $\cal G$, then $\widetilde{\ell}_{\cal D} (v) \in \Gamma_{\sigma_{\cal D}}$ for every ${\cal D} \in \Omega$, and we conclude easily. 
	If $\cal G_v = E_v \times \Omega$ is not a component of $\cal G$, we note that $\cal G_{v_1}\cap\cal G_{v_2} = (E_{v_1} \cap E_{v_2}) \times \Omega$ is an admissible center in $(\mathcal{Z},\cal G)$, since all blowups in $\alpha$ are admissible. 
	We therefore may perform this extra blowup $\alpha'\colon (\mathcal{Z}',\cal G') \to (\mathcal{Z},\cal G)$, whose exceptional divisor $\cal G_v = E_v \times \Omega$ is trivial with respect to the family structure. 
	Fix ${\cal D} \in \Omega$, set $w_1 = \ell_{\cal D}(v_1)$ and $w_2 = \ell_{\cal D}(v_2)$, and consider the associated components $\cal F_{w_1}^{\cal D}$ and $\cal F_{w_2}^{\cal D}$ of $\cal F$.
	Then, after performing a sequence of combinatorial blowups  $\rho\colon(Y',\cal F') \to (Y,\cal F)$, starting with blowing up the center $\cal F_{w_1}^{\cal D} \cap \cal F_{w_2}^{\cal D}$, the projection $\widetilde{\ell}_{\cal D'} (v) $ belongs to the graph of $  \Gamma_{\rho_{{\cal D'}} \circ \sigma_{\cal D'}}$ for every $\cal D'$ in $\Omega$.
	We have obtained, without the need to shrink the size of $\Omega$, the following commutative diagram:
	\[
	\xymatrix@C=3.5em{ (\mathcal{Z}',\cal G') \ar[d]_{\Psi'} \ar[r]^{\alpha'}  &
		(\mathcal{Z},\cal G)  \ar[d]^{\Psi} \ar[r]^(.4){\alpha} & (X_{\pi},E) \times \Omega \ar[r]^(.52){\pi \times \mathrm{Id}}      &  (X,0) \times \Omega \ar[d]^{\Phi} & \\
		(\mathcal{Y}',\cal F')  \ar[r]^{\rho}  &  (\mathcal{Y},\cal F) \ar[rr]^(.48){\sigma}&& (\C^2,0) \times \Omega \ar[r]^(.63){\beta} & \Omega}
	\]
	where $\beta_{\mathcal{Y}'}=\beta\circ\sigma\circ \rho$ and  $\beta_{\mathcal{Z}'}=\beta_{\mathcal{Y}'}\circ\Psi'$ are simple morphisms. 
	We conclude easily.
\end{proof}

\begin{remark}
	If $(X,0)$ is a hypersurface in $(\C^3,0)$, shrinking the open set $\Omega$ is not necessary when applying Lemma~\ref{cl:TechBlowingup}, since a resolution of the family can be constructed everywhere by performing a Hirzebruch--Jung process in family, exploiting the fact that, thanks to \cite[Corollary~3.4]{PopescuPampu2002} (or, more generally, to \cite[Theorem~5.1]{PopescuPampu2004}), the combinatorial data of the quasi-ordinary singularities that appear during the process are constant in the family.
\end{remark}

We conclude the section by recalling the definition of the local degree of a divisorial point $v$ of $\NL(X,0)$, as it will be very important in the remaining part of the paper.
Let $\ell\colon (X,0)\to(\C^2,0)$ be a generic projection of $(X,0)$ and consider the diagram~\eqref{eqn:diagram_defining_local_degree} (page~\pageref{eqn:diagram_defining_local_degree}).
For each component $E_{\nu}$ of $\pi_{\ell,v}^{-1}(0)$ (respectively $E_{\nu'}$ of  $\sigma_{\ell,v}^{-1}(0)$), let us choose a tubular neighborhood disc bundle $N(E_{\nu})$ (resp. $N(E_{\nu'})$), and consider the two sets
\begin{equation*}\label{notation:calN(E)}
\cal N(E_{v})=N(E_v)\setminus \bigcup_{E_\nu\neq E_v}N(E_\nu) 
\,\,\,\,\,\,\,\,\mbox{and}\,\,\,\,\,\,\,\,
\cal N(E_{\widetilde\ell(v)})=N(E_{\widetilde\ell(v)})\setminus \bigcup_{E_{\nu'}\neq E_{\widetilde\ell(v)}}N(E_{\nu'})
\end{equation*}
in $X_{\pi_{\ell,v}}$ and $Y_{\sigma_{\ell,v}}$ respectively.
We can then adjust the disc bundles $N(E_{\nu})$  and $N(E_{\nu'})$  in such a way that the cover $\ell$ restricts to a cover  
\begin{equation}\label{eqn:cover_local_degree}
\ell_{v} \colon \pi_{\ell,v}\big(\cal N(E_{v})\big) \longrightarrow \sigma_{\ell,v}\big(\cal N(E_{\widetilde\ell(v)})\big)
\end{equation}
branched precisely on the polar curve of $\ell$ (if $v$ is not a $\cal P$-node, the branching locus is just the origin).
Using a resolution in family over $\Omega$ as in the proof of the Lemma \ref{lem:generic projection}, it is easy to deduce the following result.

\begin{lemma}\label{lem:degre}
For every divisorial point $v$ of $\NL(X,0)$, the degree $\deg(\ell_{v})$ of the cover $\ell_v$ does not depend on the choice of a generic projection $\ell\colon (X,0) \to (\C^2,0)$.
\end{lemma}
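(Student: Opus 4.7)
The plan is to exploit the family resolution constructed in the proof of Lemma~\ref{lem:generic projection}, and to argue that the degree of the cover associated with $v$ is locally constant along $\Omega$, hence (up to passing to a connected component) constant.

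First, I would set up the family picture. Starting from diagram~\eqref{eq:CommutativeDiagram}, and using the same trick as in the proof of Lemma~\ref{lem:generic projection} (performing extra admissible combinatorial blowups $\alpha'$ on $\mathcal{Z}$ and $\rho$ on $\mathcal{Y}$ above the appropriate intersection of components), I can arrange that the divisorial point $v$ of $\NL(X,0)$ corresponds to a component $\mathcal{G}_v$ of the exceptional divisor of $\alpha\circ\alpha'$ that is trivial with respect to the family structure, in the sense that $\mathcal{G}_v \cong E_v\times \Omega$, and similarly $\mathcal{F}_{\eta(v)}\cong F_{\eta_\mathcal{D}(v)}\times \Omega$. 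After possibly shrinking $\Omega$, Lemma~\ref{cl:TechBlowingup} ensures that the projections $\beta_{\mathcal{Z}'}$ and $\beta_{\mathcal{Y}'}$ remain simple with respect to the new exceptional divisors.

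Next, I would construct family analogues of the pointed tubular neighborhoods used to define $\ell_v$. Since $\beta_{\mathcal{Z}'}|_{\mathcal{G}'}$ and $\beta_{\mathcal{Y}'}|_{\mathcal{F}'}$ are locally trivial deformations, one can pick $\beta_{\mathcal{Z}'}$-equivariant disc bundle neighborhoods $N(\mathcal{G}_\nu)$ of each component $\mathcal{G}_\nu$ of $\mathcal{G}'$ (and similarly $N(\mathcal{F}_{\nu'})$ downstairs), whose fibers over $\mathcal{D}\in\Omega$ recover exactly the disc bundles used to define the cover~\eqref{eqn:cover_local_degree}. Setting
\[
\mathcal{N}(\mathcal{G}_v) = N(\mathcal{G}_v) \setminus \bigcup_{\mathcal{G}_\nu\neq \mathcal{G}_v} N(\mathcal{G}_\nu), \qquad \mathcal{N}(\mathcal{F}_{\eta(v)}) = N(\mathcal{F}_{\eta(v)}) \setminus \bigcup_{\mathcal{F}_{\nu'}\neq \mathcal{F}_{\eta(v)}} N(\mathcal{F}_{\nu'}),
\]
the map $\Psi$ (composed with the outer maps of the diagram) restricts to a family cover
\[
\mathcal{L}_v \colon (\pi\times\mathrm{Id})\circ \alpha\circ\alpha'\bigl(\mathcal{N}(\mathcal{G}_v)\bigr) \longrightarrow \sigma\circ\rho\bigl(\mathcal{N}(\mathcal{F}_{\eta(v)})\bigr)
\]
which is proper and finite over $\Omega$, branched exactly along the (family of) polar curves and the trivial section $\{0\}\times \Omega$.

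Finally, I would observe that the fiber of $\mathcal{L}_v$ over $\mathcal{D}\in\Omega$ is precisely the cover $\ell_v$ of~\eqref{eqn:cover_local_degree} associated with the generic projection $\ell_\mathcal{D}$. Since $\mathcal{L}_v$ is a finite map between complex manifolds fibered smoothly over the connected open set $\Omega$, its restriction to each fiber has the same degree: indeed, the function $\mathcal{D}\mapsto \deg(\ell_{\mathcal{D},v})$ is locally constant on $\Omega$ (a finite proper map between smooth complex families has constant fiberwise degree on connected components of the parameter space), and $\Omega$ can be chosen connected. This proves that $\deg(\ell_v)$ depends only on $v$, and not on the choice of $\mathcal{D}$.

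The main obstacle will be the technical book-keeping in the second step, namely constructing family disc bundle neighborhoods that are simultaneously equivariant with respect to $\beta_{\mathcal{Z}'}$ (resp.\ $\beta_{\mathcal{Y}'}$) and compatible with $\Psi'$ so that the resulting family cover restricts fiberwise to the covers $\ell_{\mathcal{D},v}$ used to define $\deg(\ell_v)$; this relies on the simplicity of both projections, and may require further shrinking of $\Omega$.
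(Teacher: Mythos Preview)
Your proposal is correct and follows essentially the same approach as the paper, which simply states that the result is easy to deduce ``using a resolution in family over $\Omega$ as in the proof of Lemma~\ref{lem:generic projection}.'' You have fleshed out exactly the intended argument: perform the combinatorial blowups to make $v$ appear as a component trivial over $\Omega$, build $\beta$-equivariant tubular neighborhoods, and use the constancy of the degree of a finite proper map in a smooth connected family.
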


Therefore, we can set $\deg(v) = \deg(\ell_{v})$.
We call this integer the \emph{local degree of a generic projection of} $(X,0)$ \emph{at} $v$, or simply the \emph{local degree of $(X,0)$ at} $v$. Note that if $\deg(v) = 1$, then the map \eqref{eqn:cover_local_degree} is an isomorphism.


\section{Generic projections of LNE surfaces}
\label{sec:key_lemma_multiplicities}

In this section we study LNE surface germs by establishing some properties related to their generic projections.

\medskip 

We begin by proving the invariance of multiplicities under generic projections, and showing a characterization of the $\cal P$-nodes of a LNE normal surface in terms of their local degrees.
More precisely, we prove the following result:

\begin{lemma} \label{lem:projections_LNE}  
	Let $(X,0)$ be a LNE normal surface germ, let $\ell\colon (X,0)\to (\C^2,0)$ be a generic projection, let $\pi \colon X_{\pi} \to X$ be the minimal good resolution of $(X,0)$ which factors through its Nash transform, and let $v$ be a divisorial point of $\Gamma_{\pi} \subset \NL(X,0)$.
	Then:
	\begin{enumerate}
		\item \label{lem:projections_LNE_multiplicities} 
		$m_v = m_{\widetilde{\ell}(v)}$\,;
		\item \label{lem:projections_LNE_degree}
		$v$ is a $\cal P$-node of $(X,0)$ if and only if  $\deg{v}>1$.
	\end{enumerate}		
\end{lemma}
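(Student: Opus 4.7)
The plan is to analyze $\widehat\ell$ in local coordinates near a generic point of $E_v$, derive from this two key formulas $m_v = e \cdot m_w$ and $\deg(v) = e \cdot d_v$, then use the LNE hypothesis to control the ramification index $e$, and finally close the loop for part~(ii) by a Riemann--Hurwitz argument.

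First I would fix a divisorial point $v$ of $\Gamma_\pi$, set $w = \widetilde\ell(v)$, and work with diagram~\eqref{eqn:diagram_defining_local_degree}. At a generic point of $E_v$ one can choose local analytic coordinates $(u, s)$ on $X_{\pi_{\ell, v}}$ with $E_v = \{u = 0\}$ and coordinates $(u', s')$ on $Y_{\sigma_{\ell, v}}$ at its image, with $E_w = \{u' = 0\}$, in which $\widehat\ell$ takes the local form $(u, s) \mapsto \bigl(u^e \phi(u, s),\, \psi(u, s)\bigr)$ with $\phi(0, s) \neq 0$. Here $e = \ord_{E_v}(\widehat\ell^* u')$ is the ramification index of $\widehat\ell$ along $E_v$, and the topological degree $d_v$ of the restriction $\widehat\ell|_{E_v} \colon E_v \to E_w$ equals the degree of $s \mapsto \psi(0, s)$. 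A routine fiber count using the implicit function theorem (solve $\psi(u, s) = s'_0$ first, then $u^e \phi(u, s) = u'_0$) gives the product formula $\deg(v) = e \cdot d_v$.

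Next I would derive the general identity $m_v = e \cdot m_w$, valid for any normal surface. Writing $\ell = (h_1, h_2)$, both $h_1$ and $h_2$ are generic linear forms on $(X, 0)$, so $v(h_1) = 1$ by normalization, whence $\ord_{E_v}(\pi^* h_1) = m_v$. The functorial definition of $\widetilde\ell$ gives $w(x) = v(\ell^* x) = v(h_1) = 1$ (where $x$ is the first coordinate of $\C^2$), and therefore $\ord_{E_w}(\sigma^* x) = m_w$. Combining $\pi^* h_1 = \widehat\ell^* \sigma^* x$ with the local expression for $\widehat\ell$ yields $\ord_{E_v}(\pi^* h_1) = e \cdot m_w$, hence $m_v = e \cdot m_w$. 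Part~(i) therefore reduces to showing $e = 1$; this is where the LNE hypothesis is essential. I would invoke the structural results on generic projections of LNE surfaces from \cite{NeumannPedersenPichon2020a}, which ensure that the ramification divisor of $\widehat\ell$ is supported exactly on the strict transform $\Pi_\ell^*$ of the polar curve and contains no exceptional component of $\pi_{\ell, v}$; this forces $e = 1$ and proves~(i).

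For part~(ii), once $e = 1$ is established the product formula becomes $\deg(v) = d_v$. If $v$ is a ${\cal P}$-node, then $\Pi_\ell^*$ meets $E_v$ at a point $p$; the Jacobian computation from the local form (with $e = 1$) shows that $p$ is a critical point of $\widehat\ell|_{E_v}$, so $d_v \geq 2$ and $\deg(v) > 1$. Conversely, if $\deg(v) > 1$ then $d_v > 1$; since $\pi$ factors through the Nash transform, $E_w \simeq \Pro^1$ (as it is exceptional in a composition of point blowups of $(\C^2, 0)$), and Riemann--Hurwitz applied to the degree-$d_v$ cover $\widehat\ell|_{E_v} \colon E_v \to E_w$ forces at least one branch point on $E_v$. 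With $e = 1$ such a branch point must lie on $\Pi_\ell^*$, so $v$ is a ${\cal P}$-node.

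The main obstacle will be the proof of $e = 1$ in the LNE case. For a general normal surface germ $\widehat\ell$ may well ramify along exceptional components --- geometrically this corresponds to an outer collapse of certain link pieces with respect to the inner metric --- and excluding this phenomenon precisely is the heart of the Lipschitz analysis of generic projections developed in \cite{NeumannPedersenPichon2020a}. Once this input is in place the remaining steps of the lemma follow from standard local computations and Riemann--Hurwitz.
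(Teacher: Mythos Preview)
Your reduction of part~(i) to the statement $e=1$ is correct and matches the paper's Remark~\ref{remark:multiplicities_for_P-nodes}: with the formula $m_v = e\cdot m_{\widetilde\ell(v)}$, the multiplicity equality is equivalent to the non-ramification of $\widehat\ell$ along $E_v$. But this equivalence is the easy part; the actual content of the lemma is precisely that $e=1$ holds, and here your argument has a genuine gap. You write that ``the structural results on generic projections of LNE surfaces from \cite{NeumannPedersenPichon2020a} ensure that the ramification divisor of $\widehat\ell$ is supported exactly on $\Pi_\ell^*$''. No such statement is available there as a black box; what \cite{NeumannPedersenPichon2020a} provides is the test-arc criterion and the Gauss-map lemma (their Lemma~9.1), and turning these into $e=1$ is exactly the work that the paper carries out. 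The paper's argument for~(i) is substantially more involved than a citation: it takes a curvette $\widehat\gamma$ of $E_v$, chooses a \emph{second} generic projection $\ell_{\cal D'}$ that is also generic with respect to $\widehat\gamma$, uses Lemma~\ref{lem:generic projection} to compare $\widetilde\ell_{\cal D}$ and $\widetilde\ell_{\cal D'}$ on $\Gamma_\pi$, and from the resulting mismatch in Puiseux exponents constructs a pair of real arcs whose inner and outer contacts differ, contradicting LNE. Note in particular that Lemma~\ref{lem:generic projection} is where the \emph{minimality} of $\pi$ enters; your outline never uses minimality, yet Example~\ref{ex:pi_not_minimal_multiplicities} shows that~(i) can fail for a larger good resolution factoring through the Nash transform. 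Any valid proof must explain where minimality is consumed.

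Your Riemann--Hurwitz argument for the converse in~(ii) also has a gap, independent of the above. Granting $e=1$ and $d_v>1$, Riemann--Hurwitz does force $\widehat\ell|_{E_v}\colon E_v\to E_w$ to have branch points, but nothing prevents all of them from sitting at the nodes $E_v\cap E_{v'}$ of the exceptional divisor rather than on $\Pi_\ell^*$. Your Jacobian computation only applies at \emph{smooth} points of the exceptional divisor, so it does not link branch points at nodes to the polar curve. The paper avoids this by a different route: from $\deg(v)>1$ and~(i) it extracts two distinct curvette lifts $\widehat\gamma_1,\widehat\gamma_2\subset\ell^{-1}(\gamma)$ meeting $E_v$, observes that the lifted Gauss map is constant along $E_v$ when $v$ is not a $\cal P$-node, and then invokes \cite[Lemma~9.1]{NeumannPedersenPichon2020a} to produce arcs with $q_{\inn}<q_{\out}$, again contradicting LNE.
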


	Before delving into the proof of the lemma, let us recall the notions of inner and outer contact and that of inner rate.
	Let $(\gamma,0)$ and $(\gamma',0)$ be two distinct real or complex curve germs on the surface germ $(X,0)\subset(\C^n,0)$ and denote $S_{\epsilon}$ the sphere in $\C^n$ having center $0$ and radius $\epsilon>0$.
	The \emph{inner contact} between $\gamma$ and $\gamma'$ is the rational number $q_{\inn}=q_{\inn}(\gamma, \gamma')$ defined by 
	\[
	d_{\inn} \big(\gamma \cap S_{\epsilon}, \gamma' \cap S_{\epsilon}\big) = \Theta(\epsilon^{q_{\inn}}),
	\]
	where $\Theta$ stands for the big-Theta asymptotic notation of Bachmann--Landau, which is defined as follows: given two function germs $f,g\colon \big([0,\infty),0\big)\to \big([0,\infty),0\big)$ we say that $f$ is \emph{big-Theta} of $g$, and we write $f(t) = \Theta \big(g(t)\big)$, if there exist real numbers $\eta>0$ and $K >0$ such that ${K^{-1}}g(t) \leq f(t) \leq K g(t)$ for all $t\geq0$ satisfying $f(t)\leq \eta$.  
	The \emph{outer contact} $q_{\out}(\gamma, \gamma')$ is defined in an analogous way, by using the outer metric $d_{\out}$ instead of the inner metric $d_{\inn}$.
	Observe that if $(X,0)$ is LNE then $q_{\inn}(\gamma, \gamma') = q_{\out}(\gamma, \gamma')$.
	Recall that the \emph{inner rate} $q_v$ of a divisorial point $v$ of $\NL(X,0)$ is defined as the inner contact $q_\mathrm{inn}(\gamma,\gamma')$, where $\gamma,\gamma'\subset (X,0)$ are two curve germs that pullback to two curvettes through distinct points of the divisor $E_v$ associated with $v$ via any good resolution $\pi\colon X_\pi \to X$ of $(X,0)$ that makes the divisor $E_v$ appear.
	This definition only depends on the divisorial point $v$ (see \cite[Lemma~3.2]{BelottodaSilvaFantiniPichon2019}).

\begin{proof}
	We begin by proving \ref{lem:projections_LNE_multiplicities}. 
	Write $\ell=\ell_{\cal D}$ and set $w = \widetilde{\ell}_{\cal D}(v)$.
	Consider maps
	\begin{equation*}
		\xymatrix@C=5em{    
			X_{\pi_{\ell,\cal D}}         \ar[d]_{\widehat\ell_\cal{D}}    \ar[r]^{\pi_{\ell,\cal D}} & X    \ar[d]^{\ell_\cal D}           \\
			Y_{\sigma_{\ell,v\cal D}}      \ar[r]_{\sigma_{\ell,\cal D}}  &  \C^2
		}
	\end{equation*}
	as in Diagram~\ref{eqn:diagram_defining_local_degree} (page~\pageref{eqn:diagram_defining_local_degree}) such that $\pi_{v,\cal D}$ is a good resolution of $(X,0)$ factoring through its Nash transform (and therefore through $\pi$), $\sigma_{v,\cal D}$ is a sequence of point blowups of $(\C^2,0)$ such that $w$ is associated with a component $E_w$ of $(\sigma_{v,\cal D})^{-1}(0)$, and the component $E_v$ of $(\pi_{v,\cal D})^{-1}(0)$ associated with $v$ is sent by $\widehat \ell$ surjectively onto $E_w$.
	
	Take a curvette $\gamma^*$ of $E_w$ which does not intersect a component of the strict transform of  the discriminant curve $\Delta_{\cal D}$ of $\ell_\cal D$ and let $(\gamma,0)\subset(\C^2,0)$ be the irreducible curve germ defined by $\gamma=\sigma_{v,\cal D}(\gamma^*)$, so that we have $m_w=\mult(\gamma)$.
	Up to replacing $\gamma^*$ by a nearby curvette, among the components of $({\ell}_{\cal D})^{-1}(\gamma)$ we can find an irreducible curve germ $\widehat\gamma$ on $(X,0)$ whose strict transform by $\pi_{v,\cal D}$ is a curvette of $E_v$, so that we have $m_v=\mult(\widehat\gamma)$. 
	We then have $\mult(\widehat{\gamma}) = k \mult(\gamma)$, where $k$ is the degree of the covering $\widehat{\gamma}\to\gamma$ induced by $\ell$.
	
	We will argue by contradiction. 
	Assume that $\mult(\widehat\gamma)\neq \mult(\gamma)$, that is that $k>1$.  
	Our goal will be to construct two real arcs $\widehat\delta_1$ and $\widehat\delta_2$ inside $\widehat\gamma$ whose inner and outer contacts do not coincide; this will then imply that $(X,0)$ is not LNE, contradicting our hypothesis.	
	In order to do so, we consider another generic projection $\ell_{\cal D'} \colon (X,0) \to (\mathbb C^2,0)$, chosen to be generic with respect to the curve $\widehat{\gamma}$ as well, and set $\gamma'=\ell_{\cal D'}(\widehat{\gamma})$. 
	Then the cover $\widehat{\gamma}\to\gamma'$ induced by $\ell_{\cal D'}$ has degree 1, and thus $\widehat\gamma$ and $\gamma'$ have the same multiplicity since $\mult(\widehat{\gamma}) = \mbox{degree}(\ell_{\cal D'}|_{\widehat{\gamma}})\mult(\gamma')= \mult(\gamma')$.	
	Set $w' = \widetilde{\ell}_{\cal D'}(v)$.
	By Lemma~\ref{lem:generic projection}, we have $w'  = \eta_{\cal D, \cal D'}(w)$, 
%
%
	and thus $m_{w'} = m_w$ and $q_{w'} = q_v = q_w$. 
	Moreover, by definition of $\eta_{\cal D, \cal D'}$, the strict transform of $\gamma'$ by $\sigma_{v,\cal D'}$ intersects $(\sigma_{v,\cal D'})^{-1}(0)$ in a smooth point $p$ of $E_{w'}$.
	
	Observe that, since the plane curve germ $\gamma$ is the image through $\sigma_{v,\cal D}$ of a curvette of $E_w$, it has no characteristic Puiseux exponent strictly greater than the inner rate $q_w$ of $w$.
	On the other hand, the strict transform of $\gamma'$ by   $\sigma_{v,\cal D'}$ cannot be a curvette of $E_{w'}$ since  $\mult(\gamma') = \mult(\widehat\gamma) = k m_{w'}>m_{w'}$.
	Therefore, the minimal good embedded resolution of $\gamma'$ is obtained by composing $\sigma_{v,\cal D'}$ with a nontrivial sequence of point blowups, starting with the blowup of $Y_{\sigma_{v,\cal D'}}$ at $p$. 
	Let $E_{w''}$ be the last irreducible curve created by this sequence, so that the strict transform of $\gamma'$ is a curvette of  $E_{w''}$. 
	Then the inner rate $q_{w''}$ of $E_{w''}$, which is strictly greater than $q_{w'}=q_w$, is a characteristic Puiseux exponent of $\gamma'$.
	
	Let us choose an embedding $(X,0) \subset (\C^n,0)$ and coordinates  $(x_1,\ldots,x_n)$ of $\C^n$ such that $\ell_{\cal D'}(x) = (x_1,x_2)$ and $\gamma'$ is not tangent to the line $x_1=0$.
	Then, since $q_{w''}$ is a characteristic Puiseux exponent of $\gamma'$, we can find a pair of real arcs $\delta_1'$ and $\delta_2'$ among the components of the intersection $\gamma' \cap \{x_1=t \,|\, t \in \R\}$ such that their contact $q(\delta_1',\delta_2')$ is equal to $q_{w''}$ (we refer to \cite[\S3]{NeumannPichon2014} for details on this classical result about Puiseux expansions).
	Let $\widehat\delta_1$ and $\widehat\delta_2$ be two liftings of $\delta_1'$ and $\delta_2'$ via $\ell'$.
	Since the projection $\ell_{\cal D'}$ is generic with respect to $\widehat\gamma$, it induces by \cite[pp. 352-354]{Teissier1982} a bilipschitz homeomorphism for the outer metric from $\widehat{\gamma}$ onto $\gamma'$, and therefore the outer contacts $q_{\out}(\widehat{\delta}_1,\widehat{\delta}_2)$ and $q(\delta_1',\delta_2')$ coincide, so that in particular we have
	\begin{equation}\label{eqn:outer_contact_curvettes}
	q_{\out}(\widehat{\delta}_1,\widehat{\delta}_2)=q_{w''}>q_v.
	\end{equation}
		
	We will now show that the inner contact $q_{\inn}(\widehat{\delta}_1,\widehat{\delta}_2)$ between $\widehat{\delta}_1$ and $\widehat{\delta}_2$ is at most $q_v$, which will yield the contradiction we were after.
	Observe that the inner contact $q = q_{\inn}^X(\widehat{\delta}_1,\widehat{\delta}_2)$ between $\widehat{\delta}_1$ and $\widehat{\delta}_2$ can also be computed as $d^{F_t}_{\inn}\big(\widehat{\delta}_1(t),\widehat{\delta}_2(t)\big) = \Theta(t^q)$, where $d^{F_t}_{\inn}\big(\widehat{\delta}_1(t),\widehat{\delta}_2(t)\big)$ denotes the inner distance between $\widehat{\delta}_1(t)$ and $\widehat{\delta}_2(t)$ inside the Milnor fiber  $F_t = X \cap \{x_1=t\}$, that is the distance measured by taking the infimum of the inner lengths of the paths joining $ \widehat{\delta}_1(t)$ to $\widehat{\delta}_2(t)$ inside $F_t$.
	This is a consequence of the fact that, by \cite{BirbrairNeumannPichon2014} and in the language therein, the subset $\pi\big({\cal N}(E_v)\big)$ of $(X,0)$ is a $B(q_v)$-piece fibered by the restriction of the  generic linear form $x_1$ whenever $q_v>1$, while it is a conical piece if $q_v=1$. 
	
	In order to conclude, consider a small disc $D$ contained in the divisor $E_v$ and centered at the point $\widehat{\gamma}^*\cap E_v$ and let $N \cong D \times D'$ be a trivialization of the normal disc-bundle to $E_v$ over $D$ such that $\widehat{\gamma}^* = \{0\} \times D'$. 
	The intersection $F_t \cap \pi(N)$ consists of $m_v$ disjoint discs each centered at one of the $m_v$ distinct points of $\widehat{\gamma} \cap F_t$.
	Since $\delta_1(t)$ and $\delta_2(t)$ are two of these points, then they are the centers of two of these discs, $D_1$ and $D_2$ respectively.  
	Since these two discs have diameters $\Theta(t^{q_v})$, any path from $\widehat{\delta}_1(t)$ to $\widehat{\delta}_2(t)$ inside $F_t$ will have intersections with $D_1$ and $D_2$ of length at least $\Theta(t^{q_v})$.
	Therefore $q_{\inn}( \widehat{\delta}_1,\widehat{\delta}_2) \leq q_v$, and so $q_{\inn}( \widehat{\delta}_1,\widehat{\delta}_2) <q_{\out}( \widehat{\delta}_1,\widehat{\delta}_2)$, which contradicts the fact that $(X,0)$ is LNE.
	This completes the proof that $m_v=m_{\widetilde\ell_{\cal D}(v)}$.	
	
	Let us now prove \ref{lem:projections_LNE_degree}.
	If $v$ is a $\cal P$-node, then it immediately follows from the definition of degree that $\deg(v)>1$, because the cover $\ell$ is ramified in a neighborhood of the polar curve.
	Assume that $v$ is not a $\cal P$-node and that $\deg{v}>1$.
	We use again the plane curve $\gamma = \sigma_{v,\cal D}(\gamma*)$ introduced in the proof of \ref{lem:projections_LNE_multiplicities}. 
	
	By the definition of $\deg(v)$, the curve $\ell^{-1}(\gamma)$ has $k_v$ irreducible components whose strict transforms by $\pi_{v, \cal D}$ are curvettes of $E_v$, where $k_v$ divides $\deg(v)$, and we have $m_{v} = m_{\widetilde{\ell}(v)}{\deg(v)}/{k_v}$.
	Since $m_{v} =  m_{\widetilde{\ell}(v)}$ by \ref{lem:projections_LNE_multiplicities}, then $\deg(v) = k_v$, so $k_v>1$.  
	Let $\widehat{\gamma}_1$ and $\widehat{\gamma}_2$ be two components of $\ell^{-1}(\gamma)$ whose strict transforms by $\pi_{v, \cal D}$ are curvettes of $E_v$ (as was the case in part~\ref{lem:projections_LNE_multiplicities}, two such components can always be found after replacing $\gamma^*$ by a nearby curvette if necessary), and let us consider two real arcs  $\widehat{\delta}_1 \subset  \widehat{\gamma}_1$ and   $\widehat{\delta}_2 \subset  \widehat{\gamma}_2$ such that $\ell_{\cal D}(\widehat{\delta}_1 ) = \ell_{\cal D}(\widehat{\delta}_2)$.
	By definition of $q_v$, we have $q_{\inn}(\widehat{\gamma}_1, \widehat{\gamma}_2) = q_v$ and then $q_{\inn}(\widehat{\delta}_1, \widehat{\delta}_2) = q_v$. 
	Since $v$  is not a $\cal P$-node, the lifted Gauss map ${\lambda}$ on  $X_\pi$ (see \cite[Definition 6.11]{NeumannPedersenPichon2020a}) is constant along $E_v$ and we then have $\lambda(p_1) = \lambda(p_2)$, see page 19 in \emph{loc.\ cit}. 
	By \cite[Lemma 9.1]{NeumannPedersenPichon2020a}, this implies that $q_{\inn}(\delta_1, \delta_2) < q_{\out}(\delta_1, \delta_2)$, therefore  $(X,0)$ is not LNE, contradicting the hypothesis.
\end{proof}

\begin{remark}\label{remark:multiplicities_for_P-nodes}
Whenever $v$ is not a $\cal P$-node, then part \ref{lem:projections_LNE_multiplicities} of the lemma is an immediate consequence of \ref{lem:projections_LNE_degree} (whose proof is more elementary and independent on the proof of \ref{lem:projections_LNE_multiplicities}). 
Indeed, consider the degree $\deg(v)$ cover 
$\ell_{v} \colon \pi_{\ell,v}\big(\cal N(E_{v})\big) \to \sigma_{\ell,v}\big(\cal N(E_{\widetilde\ell(v)})\big)$
of equation~\eqref{eqn:cover_local_degree} (page \pageref{eqn:cover_local_degree}), and choose coordinates of $\C^2$ so that $\ell=(z_1,z_2)$, with $h=z_1$ a generic linear form on $(X,0)$.
Then $\ell_v$ restricts to a degree $\deg(v)$ cover from the intersection $F_v = \cal N(E_{v})\cap \{h=t\}$ to its image $\ell(F_v)$, implying that $m_v = k m_{\widetilde{\ell}(v)}$, where the integer $k$ divides $\deg(v)$.
\end{remark}

As a simple consequence of Lemma~\ref{lem:projections_LNE}.\ref{lem:projections_LNE_degree} we deduce the following result. 

\begin{corollary}\label{cor:genus_P-node}
Let $(X,0)$ be a LNE normal surface germ, let $v$ be a divisorial point of $\NL(X,0)$, and assume that $v$ is associated with a genus $g>0$ component $E_v$ of the exceptional divisor of some good resolution of $(X,0)$.
Then $v$ is a $\cal P$-node of $(X,0)$.	
\end{corollary}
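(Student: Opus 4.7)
The plan is to reduce the statement to Lemma~\ref{lem:projections_LNE}.\ref{lem:projections_LNE_degree}, which asserts that $v$ is a $\mathcal{P}$-node of $(X,0)$ if and only if its local degree $\deg(v)$ is strictly greater than $1$. Since $\deg(v)$ is a positive integer, it suffices to rule out $\deg(v)=1$ under the hypothesis $g(E_v)>0$.

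First I would fix a generic projection $\ell\colon(X,0)\to(\C^2,0)$ and a diagram as in~\eqref{eqn:diagram_defining_local_degree} on page~\pageref{eqn:diagram_defining_local_degree}, chosen so that $v$ corresponds to a component $E_v$ of $\pi_{\ell,v}^{-1}(0)$ which is mapped surjectively by $\widehat\ell$ onto a component $E_{\widetilde\ell(v)}$ of $\sigma_{\ell,v}^{-1}(0)$. The key observation is that $\sigma_{\ell,v}$ is a sequence of point blowups of the smooth germ $(\C^2,0)$, so every irreducible component of its exceptional divisor is a smooth rational curve; in particular $E_{\widetilde\ell(v)}\cong\Pro^1$ and $g(E_{\widetilde\ell(v)})=0$.

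Next, assuming $\deg(v)=1$, I would invoke the remark following Lemma~\ref{lem:degre}, according to which the cover $\ell_v$ of equation~\eqref{eqn:cover_local_degree} is then an isomorphism between the neighborhoods $\pi_{\ell,v}\big(\cal N(E_v)\big)$ and $\sigma_{\ell,v}\big(\cal N(E_{\widetilde\ell(v)})\big)$. Lifting through the resolutions, this forces the finite morphism $\widehat\ell|_{E_v}\colon E_v\to E_{\widetilde\ell(v)}$ to be a birational morphism between smooth projective curves, hence an isomorphism. But then $g(E_v)=g(E_{\widetilde\ell(v)})=0$, contradicting the hypothesis that $g(E_v)>0$.

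Therefore $\deg(v)>1$, and Lemma~\ref{lem:projections_LNE}.\ref{lem:projections_LNE_degree} yields that $v$ is a $\mathcal{P}$-node of $(X,0)$. There is essentially no obstacle in this argument: the entire content is already contained in Lemma~\ref{lem:projections_LNE} and in the elementary fact that point blowups of a smooth surface only produce rational exceptional components, so the only care needed is to correctly translate the isomorphism of neighborhoods obtained from $\deg(v)=1$ into an isomorphism of the corresponding exceptional curves.
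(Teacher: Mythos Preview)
Your proof is correct and follows essentially the same approach as the paper. Both arguments reduce to Lemma~\ref{lem:projections_LNE}.\ref{lem:projections_LNE_degree} and derive a contradiction from $\deg(v)=1$ by using that the induced map identifies $E_v$ (or its open part $\cal N(E_v)\cap E_v$) with the rational curve $E_{\widetilde\ell(v)}$; the paper phrases this via the homeomorphism of punctured curves, while you phrase it via $\widehat\ell|_{E_v}$ being a degree-one morphism of smooth projective curves, which is an equivalent formulation.
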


\begin{proof}
Consider again the finite cover 
$\ell_{v} \colon \pi_{\ell,v}\big(\cal N(E_{v})\big) \to \sigma_{\ell,v}\big(\cal N(E_{\widetilde\ell(v)})\big)$
of equation~\eqref{eqn:cover_local_degree} (page \pageref{eqn:cover_local_degree}), and assume that $v$ is not a $\cal P$-node. 
Then $\ell_{v}$ is a homeomorphism by Lemma~\ref{lem:projections_LNE}.\ref{lem:projections_LNE_degree}, and so is its restriction $\ell_{v}|_{\cal N(E_{v}) \cap E_v} \colon   \cal N(E_{v}) \cap E_v \to   \cal N(E_{\widetilde\ell(v)}) \cap E_{\widetilde\ell(v)}$.  
Observe that $\cal N(E_{v}) \cap E_v$ (respectively $N(E_{\widetilde\ell(v)}) \cap E_{\widetilde\ell(v)}$) is the complex curve $E_v$ (resp. $E_{\widetilde\ell(v)}$) with a finite union of discs removed.
Since $E_{\widetilde\ell(v)}$ has genus zero, this implies that $E_v$ also has genus zero. 
\end{proof}

\begin{example}\label{ex:pi_not_minimal_multiplicities}
Let us show with an example that the minimality of $\pi$ is a necessary hypothesis in Lemma~\ref{lem:projections_LNE}.
Let $(X,0)$ be the standard singularity $A_2$, which is the hypersurface singularity in $(\C^3,0)$ defined by the equation $x^2+y^2+z^3=0$.
A good resolution $\pi\colon X_\pi\to X$ of $(X,0)$ can be obtained by the method described in \cite[Chapter II]{Laufer1971}.
It considers the generic projection $\ell = \ell_{\cal D} = (y,z) \colon (X,0) \to (\C^2,0)$ and, given a suitable embedded resolution $\sigma_{\Delta} \colon Y_{\sigma_{\Delta}} \to \C^2$ of the associated discriminant curve $\Delta \colon y^2+z^3=0$, gives a simple algorithm to compute a resolution of $(X,0)$ as a cover of $Y$.
In this example, $\Delta$ is a cusp and the dual graph $\Gamma_{
	\sigma_\Delta}$ of its minimal embedded resolution $\sigma_\Delta$ is depicted on the left of Figure~\ref{figure:example_graphs}.
Its vertices are labeled as $w_0, w_1$, and $w_2$ in their order of appearance as exceptional divisors of point blowups in the resolution process, the negative number attached to each vertex denotes the self-intersection of the corresponding exceptional curve, the positive numbers in parentheses denote the multiplicities, and the arrow denotes the strict transform of $\Delta$. 
In this case Laufer's method gives us the dual graph of a good resolution $\pi_\ell$ of $(X,0)$ such that $\ell\circ \pi_\ell$ factors through $\sigma_\Delta$, appearing as the graph in the middle of Figure~\ref{figure:example_graphs}.
Again, all exceptional components are rational, each vertex is decorated by the self-intersection of the corresponding exceptional curve and with its multiplicity, the arrow denotes the strict transform of $\Delta$, and the vertices are labeled in a way that $\widetilde\ell(v_0)=\widetilde\ell(v'_0)=w_0$, $\widetilde\ell(v_1)=w_1$, and $\widetilde\ell(v_2)=w_2$.
Observe that the vertex $v_1$ has multiplicity 2, but it is sent by $\widetilde\ell$ to the vertex $w_1$, which has multiplicity 1.
However, the rational curve $E_{v_1}$ associated with the vertex $v_1$ has self-intersection $-1$ and can thus be contracted.
The resulting map $\pi\colon X_\pi\to X$, which no longer factors through $\sigma_\Delta$, is the minimal resolution of $(X,0)$ factoring through its Nash transform.
Observe that its $\cal P$-node $v_2$ can also be contracted, yielding the minimal good resolution of $(X,0)$, which in this case does not factor through the Nash transform of $(X,0)$.
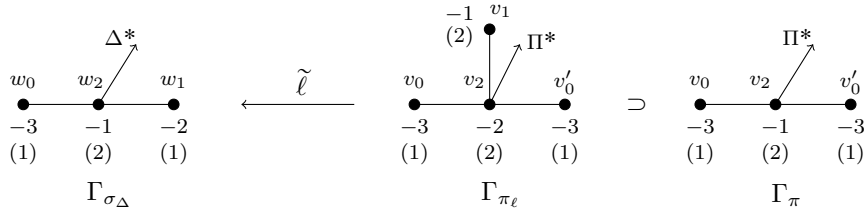
\begin{figure}[h] 
	\centering
	\begin{tikzpicture}
	\node(a)at(-.85,-1.2){$\Gamma_{\sigma_\Delta}$};
	
	\draw[thin ](-2,0)--(0,0);
	\draw[fill ] (-2,0)circle(2pt);
	\draw[fill ] (-1,0)circle(2pt);
	\draw[fill ] (0,0)circle(2pt);
	
	\draw[thin,>-stealth,->](-1,0)--+(0.5,0.8);

	\begin{footnotesize}		
	\node(a)at(-2,-0.3){$-3$};
	\node(a)at(-1,-0.3){$-1$};
	\node(a)at(0,-0.3){$-2$};
	
	\node(a)at(-2,-0.65){$(1)$};
	\node(a)at(-1,-0.65){$(2)$};
	\node(a)at(0,-0.65){$(1)$};
	
	\node(a)at(-2,0.3){$w_0$};
	\node(a)at(-1.1,0.3){$w_2$};
	\node(a)at(0,0.3){$w_1$};
	\node(a)at(-.7,0.9){$\Delta^*$};
	\end{footnotesize}

	\begin{scope}[xshift=4.2cm]
	
	\draw[>-stealth,->](-1.8,0)--+(-1.5,0);
	\node(a)at(-2.5,0.3){$\widetilde\ell$};
	
	\draw[thin ](-1,0)--(1,0);
	\draw[thin ](0,0)--(0,1);
	\draw[fill ] (-1,0)circle(2pt);
	\draw[fill ] (0,0)circle(2pt);
	\draw[fill ] (1,0)circle(2pt);		
	\draw[fill ] (0,1)circle(2pt);
	
	\draw[thin,>-stealth,->](0,0)--+(+0.4,0.8);
	
	\begin{footnotesize}
	\node(a)at(-1,0.3){$v_0$};
	\node(a)at(1,0.3){$v_0'$};
	\node(a)at(-.2,0.3){$v_2$};
	\node(a)at(.15,1.25){$v_1$};
	
	\node(a)at(.7,.85){$\Pi^*$};
	
	\node(a)at(-1,-0.3){$-3$};
	\node(a)at(0,-0.3){$-2$};
	\node(a)at(1,-0.3){$-3$};
	\node(a)at(-0.4,1.2){$-1$};
	
	\node(a)at(-1,-0.65){$(1)$};
	\node(a)at(0,-0.65){$(2)$};
	\node(a)at(1,-0.65){$(1)$};
	\node(a)at(-.4,.9){$(2)$};
	
	\end{footnotesize}
	
	\node(a)at(0.15,-1.2){$\Gamma_{\pi_\ell}$};
	
	\end{scope}

	\begin{scope}[xshift=8cm]
	
	\node(a)at(-1.85,0){$\supset$};
	
	\draw[thin ](-1,0)--(1,0);
	\draw[fill ] (-1,0)circle(2pt);
	\draw[fill ] (0,0)circle(2pt);
	\draw[fill ] (1,0)circle(2pt);
	
	\draw[thin,>-stealth,->](0,0)--+(0.5,0.8);
	
	\begin{footnotesize}
	\node(a)at(.3,0.9){$\Pi^*$};
	
	\node(a)at(-1,0.3){$v_0$};
	\node(a)at(1,0.3){$v_0'$};
	\node(a)at(-.2,0.3){$v_2$};
	
	\node(a)at(-1,-0.3){$-3$};
	\node(a)at(0,-0.3){$-1$};
	\node(a)at(1,-0.3){$-3$};
	
	\node(a)at(-1,-0.65){$(1)$};
	\node(a)at(0,-0.65){$(2)$};
	\node(a)at(1,-0.65){$(1)$};
	
	\end{footnotesize}
	
	\node(a)at(0.15,-1.2){$\Gamma_{\pi}$};
	
	\end{scope}
	
	%
	%
	%
	%
	%
	%
	%
	\end{tikzpicture}
	\caption{Dual resolution graphs for the plane curve $\Delta$ (left) and for the surface singularity $X=A_2$ (middle and right).}
	\label{figure:example_graphs}
\end{figure}	   

\noindent In the proof of Lemma~\ref{lem:projections_LNE}, the minimality of $\pi$ is only required in order to apply Lemma~\ref{lem:generic projection}.
Therefore, this examples also shows how the commutativity of the diagram of Lemma~\ref{lem:generic projection} may fail to hold on a larger dual graph  such as $\Gamma_{\pi_\ell}$.
\end{example}

We can now move our focus to the morphism $\widetilde\ell$ induced by a generic projection $\ell\colon (X,0)\to(\C^2,0)$, and more precisely to its restriction to the dual graph $\Gamma_\pi$ of some good resolution of $(X,0)$.
Recall that, given a graph $\Gamma$, we denote by $V(\Gamma)$ the set of its vertices.
In general, even whenever $\pi$ factors through the Nash transform of $(X,0)$, it is not possible to find a suitable sequence of point blowups $\sigma\colon Y \to \C^2$ of $(\C^2,0)$ such that $\widetilde\ell$ induces a morphism of graphs $\widetilde\ell|_{\Gamma_\pi}\to\Gamma_\sigma$, since in order to make the elements of $\widetilde\ell\big(V(\Gamma_\pi)\big)$ appear among the vertices of $\Gamma_\sigma$, one usually introduces too many additional vertices, so that the image $\widetilde\ell(e)$ of some edge $e$ of $\Gamma_\pi$ is not an edge of $\Gamma_\sigma$, but only a string of several edges.
Remarkably, thanks to Lemma~\ref{lem:projections_LNE}.\ref{lem:projections_LNE_degree}, in the case of LNE surfaces we can control this phenomenon completely.
Indeed, the following proposition explains that in this case we do get a morphism of graphs, provided that we restrict our attention to a subgraph of $\Gamma_\pi$ that does not contain a $\cal P$-node of $(X,0)$ in its interior.

\begin{proposition}\label{prop:morphism_graph_local}
	Let $(X,0)$ be a LNE normal surface germ, let $\pi\colon X_\pi\to X$ be the minimal good resolution of $(X,0)$ that factors through its Nash transform, let $\ell\colon (X,0)\to (\C^2,0)$ be a generic projection, and let $\widetilde\ell\colon \NL(X,0)\to\NL(\C^2,0)$ be the map induced by $\ell$.
	Let $S$ be a subset of $V(\Gamma_\pi)$  which contains all $\cal{P}$-nodes. Let $W$ be one of the connected components of $\Gamma_{\pi}\setminus S$, and let $\Gamma_0$ be the subgraph of $\Gamma_\pi$ whose underlying topological space is the closure of $W$ in $\Gamma_\pi$. Let $\sigma_{\Gamma_0} \colon Y_{\sigma_{\Gamma_0}} \to \C^2$ be the minimal sequence of point blowups such that $\widetilde{\ell}\big(V (\Gamma_0)\big) \subset V(\Gamma_{\sigma_{\Gamma_0}})$. 
	Then
	\begin{enumerate}
\item  \label{prop:morphism_graph_local_1} 
	$\sigma_{\Gamma_0} \colon Y_{\sigma_{\Gamma_0}} \to \C^2$ coincides with the minimal sequence of point blowups of $(\C^2,0)$ such that $\widetilde{\ell} \big(V (\partial W)\big) \subset V(\Gamma_{\sigma_{\Gamma_0}})$;
\item \label{prop:morphism_graph_local_2}
	the restriction $\widetilde\ell|_{\Gamma_0} \colon \Gamma_0 \to \NL(\C^2,0)$ induces an isomorphism of graphs from $\Gamma_0$ onto its image, which is the subgraph of $\Gamma_{\sigma_{\Gamma_0}}$ whose underlying topological space is the closure of the connected component of  $\Gamma_{\sigma_{\Gamma_0}}\setminus  \widetilde{\ell}(\partial W)$ which contains $ \widetilde{\ell}(  W)$; 
\item \label{prop:morphism_graph_local_3}
	the isomorphism of (ii) respects the weights of all vertices of $\Gamma_0$ that are contained in $W$.
 \end{enumerate}
\end{proposition}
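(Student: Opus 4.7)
The plan is to establish the result via a local-to-global argument, first proving parts~(ii) and~(iii) and then deducing~(i). The crucial local input comes from Lemma~\ref{lem:projections_LNE}: since $S$ contains all $\mathcal{P}$-nodes, every divisorial point $v$ of $W$ satisfies $\deg(v)=1$ by Lemma~\ref{lem:projections_LNE}.\ref{lem:projections_LNE_degree}. The finite cover $\ell_v$ of equation~\eqref{eqn:cover_local_degree} is therefore a biholomorphism, which lifts through diagram~\eqref{eqn:diagram_defining_local_degree} to a biholomorphism between a neighborhood of $E_v$ in $X_{\pi_{\ell,v}}$ and a neighborhood of $E_{\widetilde\ell(v)}$ in $Y_{\sigma_{\ell,v}}$. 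This identifies the germ of $\NL(X,0)$ at $v$ with the germ of $\NL(\C^2,0)$ at $\widetilde\ell(v)$ and forces the self-intersections and genera of $E_v$ and $E_{\widetilde\ell(v)}$ to agree (the vanishing of the genus being also a consequence of Corollary~\ref{cor:genus_P-node}).

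To prove~(ii) and~(iii), I would glue these local identifications along the connected set $W$. An edge of $\Gamma_0$ with both endpoints in $W$ maps to a single edge since no interior divisorial point of such an edge is a $\mathcal{P}$-node; an edge with only one endpoint in $W$ also maps to a single edge thanks to the germ identification at that endpoint. Combined with Lemma~\ref{lem:generic projection}, which makes $\widetilde\ell|_{\Gamma_\pi}$ canonical up to the automorphism $\eta_{\mathcal{D},\mathcal{D}'}$, this produces a graph morphism from $\Gamma_0$ onto a subgraph of $\NL(\C^2,0)$ which is a local homeomorphism at every divisorial point of $W$. Together with the injectivity addressed below, this yields the desired isomorphism of graphs, and the weight preservation in~(iii) follows directly from the local biholomorphisms, since both self-intersection and genus are preserved.

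For~(i), let $\sigma'$ be the minimal sequence of point blowups of $(\C^2,0)$ such that $\widetilde\ell\bigl(V(\partial W)\bigr) \subset V(\Gamma_{\sigma'})$. Since $V(\partial W)\subset V(\Gamma_0)$, the sequence $\sigma_{\Gamma_0}$ dominates $\sigma'$. For the converse, I would exploit the tree-like structure of $\NL(\C^2,0)$: the minimal sequence of blowups producing a finite set of divisorial valuations as vertices produces precisely the subtree they span together with the initial exceptional curve, and by~(ii) the image $\widetilde\ell(\Gamma_0)$ is a connected subgraph in which every vertex of $\widetilde\ell\bigl(V(W)\bigr)$ lies on a path between two vertices of $\widetilde\ell\bigl(V(\partial W)\bigr)$; hence each such $\widetilde\ell(v)$ already appears in $\Gamma_{\sigma'}$, and therefore $\sigma'=\sigma_{\Gamma_0}$.

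The main obstacle I anticipate is establishing the global injectivity of $\widetilde\ell|_{\Gamma_0}$, since local injectivity from $\deg(v)=1$ does not immediately give global injectivity when $\Gamma_\pi$ contains cycles. A clean way to circumvent this is to argue in families, using the simultaneous resolution $\Psi\colon(\mathcal{Z},\mathcal{G})\to(\mathcal{Y},\mathcal{F})$ from diagram~\eqref{eq:CommutativeDiagram}: the local biholomorphisms provided by $\deg(v)=1$ should assemble into an isomorphism from a suitable neighborhood of $\bigcup_{v\in W}\mathcal{G}_v$ in $\mathcal{Z}$ onto its image in $\mathcal{Y}$, from which the injectivity of $\widetilde\ell|_{\Gamma_0}$ follows at once.
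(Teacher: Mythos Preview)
Your approach is genuinely different from the paper's, and it has substantive gaps.

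The paper does not glue local biholomorphisms. Instead it contracts the curve $C=\bigcup_{v\in V(W)}E_v\subset X_\pi$ to a normal surface singularity $(S,p)$, and similarly contracts $C'=\widehat\ell(\widehat C)\subset Y_{\sigma_{\Gamma_0}}$ to a singularity $(S',p')$. The information $\deg(v)=1$ for all $v\in W$ is then used globally: it shows that the induced finite map $\widecheck\ell\colon (S,p)\to (S',p')$ is a one-sheeted analytic covering between normal spaces, hence an isomorphism by \cite[Proposition~14.7]{Remmert1994}. Pulling the minimal resolution of $(S',p')$ back through $\widecheck\ell^{-1}$ and comparing with $\eta\colon (U,C)\to (S,p)$, which is the \emph{minimal} resolution of $(S,p)$ because $\pi$ is minimal through the Nash transform, yields an isomorphism $(U',C')\cong(U,C)$. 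Parts~(ii), (iii), and then~(i) all follow from this single isomorphism together with the minimality of $\pi$ and of $\sigma_{\Gamma_0}$.

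Your argument has three specific problems. First, the biholomorphism you extract from $\deg(v)=1$ lives between $\mathcal N(E_v)\subset X_{\pi_{\ell,v}}$ and $\mathcal N(E_{\widetilde\ell(v)})\subset Y_{\sigma_{\ell,v}}$; these are auxiliary models depending on $v$, and the self-intersection of $E_v$ you read off is the one in $X_{\pi_{\ell,v}}$, not in $X_\pi$. Statement~(iii) concerns the weights in $\Gamma_\pi$ and $\Gamma_{\sigma_{\Gamma_0}}$, so you would still need to reconcile the models, which is exactly what the contraction argument accomplishes in one stroke. Second, the proposed use of the family diagram~\eqref{eq:CommutativeDiagram} to obtain global injectivity is not how that diagram is set up: the morphism $\Psi$ is not a local isomorphism over $W$, and nothing in Section~\ref{sec:lemma generic} furnishes the neighborhood isomorphism you need; the paper obtains injectivity precisely from the Remmert argument. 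Third, your proof of~(i) rests on the claim that every vertex of $\widetilde\ell(V(W))$ lies on a path in $\widetilde\ell(\Gamma_0)$ between two vertices of $\widetilde\ell(V(\partial W))$. This is false whenever $W$ contains a leaf of $\Gamma_\pi$ (such leaves exist: they are the non-$\mathcal P$-node components of the minimal resolution with valency one). The paper instead derives~(i) from the minimality of $\pi$: once $(U',C')\cong(U,C)$ is known, any extra blowup in $\sigma_{\Gamma_0}$ beyond $\sigma'$ would force a contractible configuration inside $C\subset X_\pi$, contradicting minimality.
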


Let $v$ be a vertex of $\Gamma_{\pi}$ which is not a $\cal{P}$-node, let $W$ be the connected component of $\Gamma_\pi\setminus\{\cal P\text{-nodes}\}$ containing $v$, and let $\Gamma_0$ be any subgraph of $\Gamma_\pi$ contained in the closure of $W$ and such that $v\in V(\Gamma_0)$ and all edges of $\Gamma_\pi$ at $v$ are edges of $\Gamma_0$.
Then the last part of the statement of Proposition~\ref{prop:morphism_graph_local} tells us that $g(E_v)=g(E_{\widetilde\ell(v)})=0$ and $E_v^2=E_{\widetilde\ell(v)}^2$, where the self-intersection of $E_v$ is computed in $X_\pi$ and the one of $E_{\widetilde\ell(v)}$ is computed in $Y_{\sigma_{\Gamma_0}}$.

\begin{proof}
Write $\partial W = \{z_1,\ldots,z_n\}\subset V(\Gamma_\pi)$.	
Observe that, since $\Gamma_0$ contains no $\cal P$-node in its interior, $\widetilde\ell$ does not fold it, and thus $\partial\big(\widetilde\ell(W)\big) = \widetilde\ell(\partial W)$ as subsets of $V(\Gamma_{\sigma_{\Gamma_0}})$.
If $W$ contains at least one vertex of $\Gamma_0$, that is if $\Gamma_0$ has at least a vertex which is not a point of $S$, denote by $\Gamma^\circ_0$   the maximal subgraph of $\Gamma_0$ contained in $W$ and set $V(\Gamma^\circ_0) = \{v_1, \ldots, v_r\}$, so that $V(\Gamma_0) = \{v_1, \ldots, v_r,z_1,\ldots,z_n\}$.
Let $U$ be a tubular neighborhood of the curve $C= E_{v_1} \cup \ldots  \cup E_{v_r}$ in $X_{\pi}$. 
Since the incidence matrix of $\Gamma^\circ_0$ is negative definite, the analytic contraction $\eta \colon (U,C) \to (S,p)$ of the curve $C$ onto a point $p$ defines a normal surface singularity $(S,p)$.
Observe that, since $\pi$ is the minimal resolution of $(X,0)$ which factors through its Nash transform, the only components of $\pi^{-1}(0)$ that could be contracted while retaining smoothness of the ambient surface are associated with $\cal P$-nodes of $(X,0)$.
Since $\Gamma^\circ_0\subset W \subset \Gamma_{\pi}\setminus S$ contains no $\cal P$-node, this implies that $\eta \colon (U,C) \to (S,p)$ is the minimal good resolution of the surface germ $(S,p)$. 
On the other hand, if $W$ contains no vertex of $\Gamma_0$, then $\Gamma_0$ consists of two vertices $v$ and $v'$ in $S$ and a single edge corresponding to an intersection point $p=E_v \cap E_{v'}$, in which case we set $(U,C) = (S,p) = (X_\pi,p)$ and $\eta = \mathrm{Id}_{U}$.

Let $\widehat \pi \colon X_{\widehat \pi} \to X$ be the minimal resolution of $(X,0)$ that factors through its Nash transform and such that $\ell\circ\widehat\pi$ factors through $\sigma_{\Gamma_0}$ via a map $\widehat{\ell}\colon X_{\widehat \pi}\to Y_{\Gamma_0}$. 
Then $\widehat \pi$ factors through $\pi$ by minimality of the latter, so that we obtain a commutative diagram as follows: 
\[
\xymatrix@C=3em{  
	X_{\widehat \pi}   \ar@/^1.2pc/[rr]^{\widehat\pi}    \ar[r]^{\beta}     \ar[d]_{\widehat{\ell}}   
	&  X_{\pi}  \ar[r]^{\pi}  & X  \ar[d]^{\ell}
	\\
	Y_{\Gamma_0} \ar[rr]^{\sigma_{\Gamma_0}} &  & \C^2
}
\]
Set $\widehat{U}=\beta^{-1}(U)$ and $\widehat C = \beta^{-1}(C)$, so that $(\widehat U,\widehat C)$ contracts to $(S,p)$ via $\widehat \pi = \pi\circ \beta$.
Set $U' = \widehat{\ell}(\widehat{U})$ and consider the curve $C' = \widehat \ell \big(\widehat C\big) = E_{w_1} \cup \ldots  \cup E_{w_s} \subset \sigma_{\Gamma_0}^{-1}(0)$ in $Y_{\Gamma_0}$.
Observe that, since $\partial\big(\widetilde\ell(W)\big) = \widetilde\ell(\partial W) \subset V(\Gamma_{\sigma_{\Gamma_0}})$, we have $\{w_1, \ldots, w_s\}=\widetilde{\ell}(W) \cap V(\Gamma_{\sigma_{\Gamma_0}})$.
Moreover, since $\widehat U \cap \widehat \pi^{-1}(0) \subset \widehat C \cup E_{z_1} \cup \cdots \cup E_{z_n}$ and no curve among the $E_{z_i}$ is contracted by $\widehat\ell$, we deduce that $U'$ is open in $Y_{\Gamma_0}$.
It follows that $U'$ is a tubular neighborhood of $C'$.
This also shows that the set $\widetilde\ell(\Gamma_0)$ is the closure of the connected component of  $\Gamma_{\sigma_{\Gamma_0}}\setminus  \widetilde{\ell}(\partial W)$ which contains $ \widetilde{\ell}(W)$.
To establish the proposition, it is then sufficient to prove that $\ell$ induces an isomorphism between the pairs $(U,C)$ and $(U',C')$.

Similarly as above, the contraction of the curve $C'$ in $U'$ defines a normal surface singularity $(S',p')$ and an analytic map $\eta' \colon (U',C') \to (S',p')$ which is a good resolution of $(S',p')$.
Moreover, the restriction $\widehat{\ell}|_{U}$ induces a finite analytic map $\widecheck{\ell} \colon (S,p)\to (S',p')$.
Since $W$ contains no $\cal P$-node, by Lemma \ref{lem:projections_LNE} we have $\deg(w) = 1$ for every divisorial point $w$ of $W \cap \Gamma_0$.
This implies that  $\widecheck{\ell}$ is a one-sheeted analytic covering between normal complex analytic spaces, thus an isomorphism by \cite[Proposition 14.7]{Remmert1994}.
It follows that $\widecheck{\ell}^{-1} \circ \eta'$ is a good resolution of $(S,p)$.
Therefore, by minimality of the resolution $\eta$, the map $\widecheck{\ell}^{-1} \circ \eta'$ factors through $\eta$ via a finite sequence of point blowups $\alpha \colon (U',C') \to (U,C)$, so that we obtain the following commutative diagram:
\[
\xymatrix@C=4em{  
	(\widehat{U},\widehat{C})   \ar[r]^(.43){\beta}  \ar[d]_{\widehat{\ell}}   
	&  (U, C)      \ar[r]^{\eta}  & (S,p) 
	\\
	( U',C')   \ar[ru]^{\alpha}   \ar[rr]^{\eta'} &  &(S',q) \ar[u]_{\widecheck{\ell}^{-1}}
}
\]
It remains to show that $\alpha$ is an isomorphism.
If this is not the case, then the exceptional component of the last point blowup forming $\alpha$, which is contractible by definition, is the image through $\widehat\ell$ of the exceptional component of one of the point blowups forming $\beta$.
As this contradicts the minimality condition in the definition of $\sigma_{\Gamma_0}$, this proves both parts~\ref{prop:morphism_graph_local_2} and \ref{prop:morphism_graph_local_3} of the proposition. Part~\ref{prop:morphism_graph_local_1} is then a consequence of the minimality of $\pi$.
\end{proof}

What might prevent Proposition~\ref{prop:morphism_graph_local} from holding globally on $\Gamma_\pi$ is that, for example, there might exist an edge $e$ of $\Gamma_\pi$ such that $\widetilde\ell(e)$ contains in its interior one (and, for the sake of the example, exactly one) point of the form $\widetilde\ell(v)$ for some vertex $v$ elsewhere in $\Gamma_\pi$.
However, whenever this happens it is always possible to \emph{refine} the graph $\Gamma_\pi$, performing a blowup of the double point of the exceptional divisor of $X_\pi$ that corresponds to $e$ and thus subdividing the edge $e$ by adding a new vertex $w$, and this vertex satisfies $\widetilde\ell(w)=\widetilde\ell(v)$.
Observe that, if $(X,0)$ were arbitrary, this could still fail to give a morphism of graphs since the vertex associated with the blowup would not necessarily be sent to $\widetilde\ell(v)$ by $\widetilde\ell$.
The fact that this does not occur in the case of LNE surfaces, and that therefore we can refine $\Gamma_\pi$ to obtain a morphism of graphs, is the content of the following corollary.

\begin{corollary}\label{cor:morphism_graphs}
	Let $(X,0)$ be a LNE normal surface germ, let $\pi\colon X_\pi\to X$ be the minimal good resolution of $(X,0)$ that factors through its Nash transform, let $\ell\colon (X,0)\to (\C^2,0)$ be a generic projection, let $\widetilde\ell\colon \NL(X,0)\to\NL(\C^2,0)$ be the map induced by $\ell$, and let $\sigma_\ell \colon Y_{\sigma_{\ell}} \to \C^2$ be the minimal sequence of point blowups of $(\C^2,0)$ such that $\widetilde\ell\big(V(\Gamma_\pi)\big)\subset V(\Gamma_{\sigma_{\ell}})$.
	Then there exists a good resolution $\pi'\colon X_{\pi'}\to X$ of $(X,0)$, obtained by composing $\pi$ with a finite sequence of blowups of double points of the successive exceptional divisors, such that $\widetilde\ell$ induces a morphism of graphs $\widetilde\ell |_{\Gamma_{\pi'}}\colon \Gamma_{\pi'}   \to \Gamma_{\sigma_{\ell}}$.
\end{corollary}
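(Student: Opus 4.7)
The plan is to refine $\pi$ through successive blowups of double points of the exceptional divisor, guided by how the image of $\Gamma_\pi$ sits inside $\Gamma_{\sigma_\ell}$.

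First, I would apply Proposition~\ref{prop:morphism_graph_local} to get local control. Since $\pi$ factors through the Nash transform of $(X,0)$, every $\cal P$-node appears as a vertex of $\Gamma_\pi$, so no interior point of an edge of $\Gamma_\pi$ is a $\cal P$-node. Taking $S$ to be the set of $\cal P$-nodes among $V(\Gamma_\pi)$, the proposition applied to each connected component $W$ of $\Gamma_\pi\setminus S$ and to $\Gamma_0=\overline W$ shows that $\widetilde\ell|_{\Gamma_0}$ is a graph isomorphism onto its image in $\Gamma_{\sigma_{\Gamma_0}}$. In particular, each edge $e=[v,v']$ of $\Gamma_\pi$ is sent bijectively onto a single edge $\widetilde\ell(e)$ of some $\Gamma_{\sigma_{\Gamma_0}}$.

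Next I would compare $\sigma_{\Gamma_0}$ with $\sigma_\ell$. Since $V(\Gamma_0)\subset V(\Gamma_\pi)$, the sequence $\sigma_\ell$ dominates $\sigma_{\Gamma_0}$, and the additional blowups going from $\sigma_{\Gamma_0}$ to $\sigma_\ell$ only insert intermediate divisorial vertices that subdivide certain edges of $\Gamma_{\sigma_{\Gamma_0}}$. For each edge $e=[v,v']$ of $\Gamma_\pi$, let $w_1,\ldots,w_k$ denote the intermediate vertices appearing on the path in $\Gamma_{\sigma_\ell}$ from $\widetilde\ell(v)$ to $\widetilde\ell(v')$, which subdivide the single edge $\widetilde\ell(e)\subset\Gamma_{\sigma_{\Gamma_0}}$. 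Since $\widetilde\ell|_e$ is a homeomorphism sending divisorial points to divisorial points, there exist unique divisorial points $u_1,\ldots,u_k$ in the interior of $e$ with $\widetilde\ell(u_i)=w_i$. By the combinatorial description of divisorial points on the interior of an edge recalled in Section~\ref{sec:lemma generic}, each $u_i$ is the exceptional component of a blowup within a finite sequence of blowups of double points of the successive exceptional divisors of $X_\pi$, starting with the blowup of $E_v\cap E_{v'}$.

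I would then define $\pi'\colon X_{\pi'}\to X$ as the resolution obtained by composing $\pi$ with, for every edge $e$ of $\Gamma_\pi$, the finitely many double-point blowups realizing the corresponding $u_i$'s as vertices; this is a finite process because each edge of $\Gamma_\pi$ contributes only finitely many intermediate vertices. By construction, $\Gamma_{\pi'}$ is the subdivision of $\Gamma_\pi$ obtained by inserting the $u_i$'s: $\widetilde\ell$ sends every vertex of $\Gamma_{\pi'}$ to a vertex of $\Gamma_{\sigma_\ell}$ (the original vertices by the definition of $\sigma_\ell$, the new $u_i$'s to the $w_i$'s by construction), and each edge of $\Gamma_{\pi'}$, being a piece of a subdivided edge of $\Gamma_\pi$, is sent to the corresponding edge of $\Gamma_{\sigma_\ell}$ between consecutive $w_i$'s. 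This yields the required morphism of graphs $\widetilde\ell|_{\Gamma_{\pi'}}\colon\Gamma_{\pi'}\to\Gamma_{\sigma_\ell}$. The main subtlety is the second paragraph: one must verify that each $u_i$ genuinely arises from a sequence of double-point blowups rather than a more general blowup, which is really a combinatorial assertion about divisorial points lying strictly on an edge of a dual graph, and not a substantial obstacle once Proposition~\ref{prop:morphism_graph_local} is available.
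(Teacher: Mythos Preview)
Your approach follows the same arc as the paper's: use Proposition~\ref{prop:morphism_graph_local} to see that each edge of $\Gamma_\pi$ maps isomorphically to a single edge of some $\Gamma_{\sigma_{\Gamma_0}}$, then subdivide to match $\Gamma_{\sigma_\ell}$. The paper applies the proposition edge-by-edge (effectively taking $S=V(\Gamma_\pi)$, so each $\Gamma_0$ is a single closed edge), while you apply it component-by-component (with $S$ the set of $\cal P$-nodes); this is a cosmetic difference.

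There is, however, a gap in your second step. You assert that since $\widetilde\ell|_e$ is a homeomorphism sending divisorial points to divisorial points, the preimages $u_i=(\widetilde\ell|_e)^{-1}(w_i)$ are divisorial. That implication runs the wrong way: knowing that divisorial points go to divisorial points says nothing about where divisorial points come from. Your closing remark also misidentifies the difficulty: once a point is known to be divisorial and lies in the interior of an edge, it \emph{automatically} arises from a sequence of double-point blowups (this is precisely the combinatorial fact recalled in Section~\ref{sec:lemma generic}); the real question is why $u_i$ is divisorial at all.

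The paper sidesteps this by extracting from the \emph{proof} (not merely the statement) of Proposition~\ref{prop:morphism_graph_local} an isomorphism of smooth germs
\[
\alpha\colon (X_\pi,E_v\cap E_{v'})\xrightarrow{\ \sim\ } (Y_{\sigma_{\Gamma_0}},E_{\widetilde\ell(v)}\cap E_{\widetilde\ell(v')}),
\]
and then transporting the sequence of double-point blowups occurring over $E_{\widetilde\ell(v)}\cap E_{\widetilde\ell(v')}$ in the factorization of $\sigma_\ell$ through $\sigma_{\Gamma_0}$ back to $X_\pi$ via $\alpha$. This \emph{constructs} the subdivision of $e$ directly, with $\widetilde\ell(u_i)=w_i$ guaranteed by the construction rather than argued after the fact. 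Your argument can be repaired along the same lines (the isomorphism of germs is available in your setup too), or alternatively by invoking the general valuation-theoretic fact that preimages of divisorial valuations under a finite morphism are divisorial.
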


\begin{proof}
	If $\Gamma_\pi$ consists of a single vertex then there is nothing to prove, so that we can assume without loss of generality that $\Gamma_\pi$ has at least an edge.
	Let $e$ be an edge of $\Gamma_\pi$ and let $\Gamma_0$ be the subgraph of $\Gamma_\pi$ that consists of $e$ and of the two vertices $v$ and $v'$ to which the latter is adjacent.
	Let $\sigma_{\Gamma_0}$ be the minimal sequence of point blowups of $(\C^2,0)$ such that $\widetilde\ell(v),\widetilde\ell(v')\in V(\Gamma_{\sigma_{\Gamma_0}})$.
	By Proposition~\ref{prop:morphism_graph_local}, $\widetilde\ell(e)$ is an edge of $\Gamma_{\sigma_{\Gamma_0}}$, and in particular $\widetilde\ell$ induces an isomorphism of smooth germs $\alpha\colon (X_\pi,E_v\cap E_{v'})\stackrel{\sim}{\longrightarrow} (Y_{\sigma_{\Gamma_0}},E_{\widetilde\ell(v)}\cap E_{\widetilde\ell(v')})$.
	Now, $\sigma_{\ell}$ factors through $\sigma_{\Gamma_0}$ by minimality of the latter.
	In particular, a finite sequence of point blowups above $E_{\widetilde\ell(v)}\cap E_{\widetilde\ell(v')}$ occur in this factorization.
	By performing the same sequence of blowups on $(X_\pi,E_v\cap E_{v'})$ via the isomorphism $\alpha$, we subdivide the edge $e$ in a chain of edges that is sent isomorphically to a subgraph of $\Gamma_{\sigma_\ell}$ via $\widetilde\ell$.	
	Repeating this procedure for every edge $e$ of $\Gamma_\pi$, we obtain the resolution $\pi'$ that we were after.
\end{proof}

Observe that the resulting morphism of graphs $\widetilde\ell|_{\Gamma_{\pi'}} \colon \Gamma_{\pi'} \to \Gamma_{\sigma_\ell}$ is not surjective, as is clear from Example~\ref{ex:pi_not_minimal_multiplicities}.
This issue will be discussed further in Section~\ref{sec:discriminant}.

\smallskip

	We conclude the section by discussing a remarkable property of the Nash transforms of LNE normal surface germs.
	The singularity $(S,p)$ appearing in the course of the proof of Proposition~\ref{prop:morphism_graph_local}, being isomorphic to the singularity $(S',p')$ appearing in a modification of $(\C^2,0)$, is \emph{sandwiched}, which means that it admits a proper bimeromorphic morphism to a smooth surface germ.
	Sandwiched singularities play an important role in Spivakovsky's proof of resolution of singularities of surfaces via normalized Nash transforms \cite{Spivakovsky1990}, since Hironaka \cite{Hironaka1983} proved that it is possible to reduce any singularity to a sandwiched singularity by a finite sequence of normalized Nash transforms.
	Since Proposition~\ref{prop:morphism_graph_local} applies in particular to any connected component of the complement in $\NL(X,0)$ of its $\cal P$-nodes, and the $\cal P$-nodes are precisely the divisorial valuations corresponding to the exceptional components of the Nash transform of $(X,0)$, we deduce the following result:

\begin{corollary}
	Let $(X,0)$ be a LNE normal surface germ and let $\nu\colon \mathcal N(X) \to X$ be the Nash transform of $(X,0)$.
	Then all the singularities of $\mathcal N(X)$ are sandwiched.
\end{corollary}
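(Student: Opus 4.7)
The plan is to apply Proposition~\ref{prop:morphism_graph_local} taking $S$ to be the full set $V_{\mathcal P}$ of $\mathcal P$-nodes of $\Gamma_{\pi'}$, where $\pi'\colon X_{\pi'}\to X$ denotes the minimal good resolution of $(X,0)$ that factors through $\nu$. Using the observation highlighted just before the statement, that the $\mathcal P$-nodes are exactly the divisorial valuations corresponding to the exceptional components of $\nu$ itself, each singular point $p$ of $\mathcal N(X)$ can be identified with a unique connected component $W$ of $\Gamma_{\pi'}\setminus V_{\mathcal P}$: namely, the one formed by the vertices associated with the components of $(\pi')^{-1}(0)$ that are contracted to $p$ by the morphism $\rho\colon X_{\pi'}\to \mathcal N(X)$. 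Consequently, the germ $(\mathcal N(X),p)$ coincides with the normal germ $(S,p)$ obtained by contracting the components $\{E_v\}_{v\in V(W)}$ inside $X_{\pi'}$.

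Next, I would let $\Gamma_0$ be the closure of $W$ in $\Gamma_{\pi'}$ and invoke Proposition~\ref{prop:morphism_graph_local}. The proof of that proposition exhibits a canonical isomorphism between $(S,p)$ and the germ $(S',p')$ obtained by contracting the curve $C'$ inside the smooth surface $Y_{\sigma_{\Gamma_0}}$, where $\sigma_{\Gamma_0}\colon Y_{\sigma_{\Gamma_0}}\to\C^2$ is the associated sequence of point blowups of $(\C^2,0)$. In particular $(\mathcal N(X),p)\cong (S',p')$, and it is now enough to verify that $(S',p')$ is sandwiched.

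To do so, I would observe that the restriction of $\sigma_{\Gamma_0}$ to the tubular neighborhood $U'$ of $C'$ is a proper bimeromorphic map $U'\to\C^2$ that sends the connected fiber $C'$ of the contraction $\eta'\colon U'\to (S',p')$ to the single point $0\in\C^2$. Standard rigidity for proper bimeromorphic maps of normal surfaces (or Stein factorization applied to $\sigma_{\Gamma_0}|_{U'}$ using that $(S',p')$ is normal) then forces $\sigma_{\Gamma_0}|_{U'}$ to factor through $\eta'$, producing a proper bimeromorphic morphism $(S',p')\to(\C^2,0)$ onto the smooth germ $(\C^2,0)$. This realizes $(\mathcal N(X),p)$ as a sandwiched singularity, which completes the proof. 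The only non-routine step is the identification in the first paragraph between singular points of $\mathcal N(X)$ and connected components of $\Gamma_{\pi'}\setminus V_{\mathcal P}$; once this is in place, the remainder is a direct consequence of Proposition~\ref{prop:morphism_graph_local} together with a rigidity argument.
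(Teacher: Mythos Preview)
Your proposal is correct and takes essentially the same approach as the paper: apply Proposition~\ref{prop:morphism_graph_local} with $S$ equal to the set of $\cal P$-nodes, and use the isomorphism $(S,p)\cong(S',p')$ from its proof together with the observation that $(S',p')$, arising on a blowup of $(\C^2,0)$, is sandwiched. You spell out a few details the paper leaves implicit (the correspondence between singularities of $\cal N(X)$ and connected components $W$, and the factorization argument producing the map $(S',p')\to(\C^2,0)$), but the strategy is identical.
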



\section{Inner rates on LNE surface germs}
\label{sec:inner_rates}

In this section we move to the study of the inner rates of LNE surface germs, whose definition was recalled immediately before the proof of Lemma~\ref{lem:projections_LNE}, and prove parts \ref{thm:main_inner_rates} and \ref{thm:main_P-vector} of Theorem~\ref{thm:main}.

\medskip

We begin by endowing the dual graph of a good resolution of $(X,0)$ with a natural metric.
Let $\pi\colon X_\pi\to X$ be a good resolution of $(X,0)$ factoring through the blowup of its maximal ideal, and denote by $|\Gamma_\pi|$ the topological space underlying the graph $\Gamma_\pi$.
We endow $|\Gamma_\pi|$ with the metric defined by declaring that the length of an edge connecting two vertices $v$ and $w$ is equal to $1/\lcm(m_v,m_w)$, and denote by $d$ the associated distance function.
Observe that, since the exceptional component of the blowup of an intersection point between the two components associated with $v$ and $w$ has multiplicity $m_v+m_w$, and $1/\lcm(m_v,m_w)=1/\lcm(m_v,m_v+m_w)+1/\lcm(m_v+m_w,m_w)$, the metric on $|\Gamma_\pi|$ is compatible with subdividing the edges of the graph $\Gamma_\pi$ by blowing up $X_\pi$ at double points of $\pi^{-1}(0)$, and thus induces a metric on $\NL(X,0)$.
The reader should be warned that this metric on $\Gamma_\pi$ is not the same as the one defined in \cite[\S2.1]{BelottodaSilvaFantiniPichon2019}, albeit it is strictly related to the latter and was already briefly used in Lemma~5.5 of \emph{loc.\ cit.}

The following proposition is strictly stronger than part \ref{thm:main_inner_rates} of Theorem~\ref{thm:main}, as it computes inner rates on the whole $\NL(X,0)$ rather than on a specific resolution graph.

\begin{proposition}
	\label{prop:inner_rate_LNE}
	Let $(X,0)$ be a LNE normal surface germ.
	Then, for every divisorial point $v$ of $\NL(X,0)$, the inner rate $q_v$ of $v$ equals $d(v, V_{\cal L})+1$, where $d(v, V_{\cal L})$ denotes the distance of $v$ from the set $V_{\cal L}$ of all $\cal L$-nodes of $(X,0)$.
\end{proposition}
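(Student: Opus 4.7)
The plan is to transfer the computation of inner rates to the smooth germ $(\C^2,0)$ via a generic projection $\ell\colon(X,0)\to(\C^2,0)$, leveraging both the LNE hypothesis and the structural results on generic projections established in Section~\ref{sec:key_lemma_multiplicities}. On $(\C^2,0)$ the formula reduces to a classical Puiseux-series computation, and the LNE hypothesis allows one to transport the resulting equality back to $(X,0)$.

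First, I would verify the analogous formula for $(\C^2, 0)$: for every divisorial point $w \in \NL(\C^2, 0)$ one has $q_w^{\C^2} = d(w, \{w_0\}) + 1$, where $w_0$ is the unique $\cal L$-node of $\NL(\C^2,0)$, namely the divisorial point associated with the blowup of the origin. By induction on a sequence of point blowups realizing $w$, one parametrizes two generic curvettes of $E_w$ via Puiseux series, computes their outer contact explicitly, and observes that this matches the recursive behavior of the lengths $1/\lcm(m_v,m_{v'})$ of edges along the chain from $w_0$ to $w$. Since $(\C^2, 0)$ is smooth, its inner and outer metrics agree, so this gives the inner rate.

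Second, I would prove the identity $q_v = q_{\widetilde\ell(v)}^{\C^2}$ for every divisorial point $v\in\NL(X,0)$. If $v$ is not a $\cal P$-node, then by Lemma~\ref{lem:projections_LNE}~(\ref{lem:projections_LNE_degree}) we have $\deg(v) = 1$, so the associated cover $\ell_v$ of equation~(\ref{eqn:cover_local_degree}) is unramified of degree one and hence a biholomorphism. Such a biholomorphism is locally bilipschitz for the outer metric, so the outer contact of two generic curvettes through distinct points of $E_v$ equals the outer contact of their images through $E_{\widetilde\ell(v)}$. The LNE hypothesis on $(X,0)$ and the smoothness of $(\C^2,0)$ identify outer and inner contacts on each side, yielding the identity. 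For a $\cal P$-node $v$, the equality is extended by a continuity argument using the density of non-$\cal P$-node divisorial points in $\NL(X,0)$.

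Third, I would show that $\widetilde{\ell}$ is an isometric embedding of a suitable refinement of $\Gamma_\pi$ into $\NL(\C^2,0)$, and that $\widetilde{\ell}^{-1}(\{w_0\}) = V_{\cal L}$. Lemma~\ref{lem:projections_LNE}~(\ref{lem:projections_LNE_multiplicities}) gives $m_v = m_{\widetilde{\ell}(v)}$, preserving edge lengths; Corollary~\ref{cor:morphism_graphs} provides a refinement $\pi'$ of $\pi$ for which $\widetilde\ell|_{\Gamma_{\pi'}}$ induces a morphism of graphs into $\Gamma_{\sigma_\ell}$, and Proposition~\ref{prop:morphism_graph_local} guarantees that this morphism is a local isomorphism of weighted graphs away from $\cal P$-nodes. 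A combinatorial argument then shows that geodesic paths in $\NL(X,0)$ are sent to geodesic paths in $\NL(\C^2,0)$. Finally, choosing the generic linear form $h$ on $X$ to be the pullback of a generic linear coordinate $z_1$ of $\C^2$ via $\ell$, the strict transform of $\{h=0\}$ by $\pi'$ passes through $E_v$ if and only if the strict transform of $\{z_1=0\}$ by $\sigma_\ell$ passes through $E_{\widetilde\ell(v)}$, which forces $\widetilde{\ell}(v) = w_0$.

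Combining these ingredients yields
\[
q_v \;=\; q_{\widetilde\ell(v)}^{\C^2} \;=\; d\big(\widetilde\ell(v), \{w_0\}\big) + 1 \;=\; d(v, V_{\cal L}) + 1.
\]
The main obstacle will be the verification of distance-preservation in Step 3: although edge lengths are preserved locally by $\widetilde\ell$, confirming the global isometry requires a careful analysis of how the map behaves on long chains of edges, in particular ensuring that a shortest path from $v$ to an $\cal L$-node in $\NL(X,0)$ is mapped onto a shortest path from $\widetilde\ell(v)$ to $w_0$ in $\NL(\C^2,0)$, without being shortened by folding at $\cal P$-nodes lying along the image chain.
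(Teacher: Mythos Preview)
Your overall strategy---reduce to $(\C^2,0)$ via a generic projection, use the known formula there, and transport back---matches the paper's. However, there is a genuine gap in Step~3, and Step~2 is more laborious than necessary.

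For Step~2, the identity $q_v = q_{\widetilde\ell(v)}$ holds for \emph{any} normal surface germ, not just LNE ones, and is precisely \cite[Lemma~3.2]{BelottodaSilvaFantiniPichon2019}; the paper invokes it directly. Your degree-one-cover argument and continuity extension are unnecessary detours, and in fact the LNE hypothesis plays no role in this step.

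The real issue is Step~3. The map $\widetilde\ell$ is \emph{not} an isometric embedding of $\Gamma_{\pi'}$ into $\NL(\C^2,0)$: it genuinely folds edges at $\cal P$-nodes, so an injective path in $\Gamma_{\pi'}$ can be sent to a path that backtracks, strictly shortening its length. You identify this obstacle yourself at the end but do not resolve it. What you have established (edge lengths preserved, $V_{\cal L}=\widetilde\ell^{-1}(w_0)$) gives only the inequality $d(v,V_{\cal L}) \geq d\big(\widetilde\ell(v),w_0\big)$. For the reverse inequality one needs a path from $v$ to some $\cal L$-node on which $\widetilde\ell$ is injective. The paper obtains this from \cite[Proposition~3.9]{BelottodaSilvaFantiniPichon2019}, which guarantees a path from $v$ to an $\cal L$-node along which the inner rate function is \emph{strictly decreasing}; since inner rates commute with $\widetilde\ell$, such a path maps injectively. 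Without this (or an equivalent) input, your argument does not close, and no purely combinatorial reasoning about the graph structure will supply it---the existence of such a monotone path is an analytic fact about inner rates.
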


\begin{proof}
Let $\pi \colon X_{\pi} \to X$ be the minimal good resolution of $(X,0)$ which factors through its Nash transform and let $\Gamma_{\pi'}$ be a refinement of $\Gamma_\pi$ as in Corollary~\ref{cor:morphism_graphs}.
We will begin by proving the wanted equality for divisorial points contained in $\Gamma_{\pi'}$.
Denote by $w_0$ the unique $\cal L$-node of $(\C^2,0)$, that is the divisorial point associated with the blowup of $\C^2$ at $0$.
For every divisorial point $w$ of $\NL(\C^2,0)$ the inner rate of $w$ is $d(w,w_0)+1$ by \cite[Lemma~5.5]{BelottodaSilvaFantiniPichon2019} (or, in a more elementary way, by a simple computation using Lemma~3.6 of \emph{loc.\ cit.}).
Since the inner rates on $(X,0)$ and $(\C^2,0)$ commute with the map $\widetilde\ell$ (see \cite[Lemma~3.2]{BelottodaSilvaFantiniPichon2019}), we need to show that $d(v,V_{\cal L}) = d(\widetilde\ell(v),w_0)$.
We claim that, if $\gamma$ is an injective path in $\Gamma_{\pi'}$ connecting two divisorial points $v_1$ and $v_2$, then the length of $\gamma$ is greater or equal to the length of its image $\widetilde\ell(\gamma)$ in $\NL(\C^2,0)$, with equality holding as long as $\widetilde\ell$ maps $\gamma$ injectively onto its image.
Indeed, any edge $e$ in $\gamma$ is sent via $\widetilde\ell$ to an edge $\widetilde\ell(e)$ of the dual graph of some sequence of point blowups of $(\C^2,0)$ thanks to Corollary~\ref{cor:morphism_graphs}.
It then follows from Lemma~\ref{lem:projections_LNE}.\ref{lem:projections_LNE_multiplicities} that the edges $e$ and $\widetilde\ell(e)$ have the same length, which implies our claim.
In particular, since $V_{\cal L}=\widetilde\ell^{-1}(w_0)$, we deduce that $d(v,V_{\cal L})\geq d(\widetilde\ell(v),w_0)$.
To obtain the converse inequality it is sufficient to prove that there exists a path $\gamma$ from $v$ to an element of $V_{\cal L}$ where $\widetilde\ell$ is injective.
This follows from the fact that there exists such a path along which the inner rate function is strictly decreasing (and hence injective), which was proven in \cite[Proposition~3.9]{BelottodaSilvaFantiniPichon2019}.
The fact that the equality holds on the whole of $\NL(X,0)$ is a consequence of \cite[Lemma~5.5]{BelottodaSilvaFantiniPichon2019} (which is itself based on the same computations using Lemma~3.6 of \emph{loc.\ cit.} that appears above).
\end{proof}

\begin{remark}
	Proposition~\ref{prop:inner_rate_LNE} shows that the inner rate function generalizes the function $s$ used by Spivakovsky in \cite[Definition~5.1]{Spivakovsky1990} to study minimal and sandwiched surface singularities.
\end{remark}
	
In order to prove part \ref{thm:main_P-vector} of Theorem~\ref{thm:main}, we need to rely on a deeper result, the so-called \emph{Laplacian formula} for the inner rate function that we obtained in \cite{BelottodaSilvaFantiniPichon2019} and that we will briefly recall now.
In order to state this formula we will introduce two additional vectors indexed by the vertices of the dual graph $\Gamma_\pi$ of a good resolution $\pi\colon X_\pi\to X$ of $(X,0)$.
Let $L_\pi$ and $P_\pi$ be respectively the $\cal L$- and the $\cal P$-vector of $(X,0)$ as before.
For every vertex $v$ of $\Gamma_\pi$, set $k_v=\val_{\Gamma_\pi}(v)+2g(v)-2$ and $a_v=m_vq_v$, and consider the vectors $K_\pi=(k_v)_{v\in V(\Gamma_\pi)}$ and $A_\pi=(a_v)_{v\in V(\Gamma_\pi)}$.
Denote by $I_{\Gamma{\pi}}$ the incidence matrix of the exceptional divisor of $\pi$.
Then the following equality holds:
\begin{equation}\label{equation:laplacian_formula_effective}
I_{\Gamma_\pi} \cdot A_\pi = K_\pi  +  L_\pi - P_\pi.
\end{equation}
This equality is an effective version (see \cite[Proposition~5.3]{BelottodaSilvaFantiniPichon2019}) of the main result of \emph{loc.\ cit.}

\begin{proof}[Proof of part \ref{thm:main_P-vector} of Theorem~\ref{thm:main}]
	For every vertex $v$ of $\Gamma_\pi$, equation~\eqref{equation:laplacian_formula_effective} yields
\begin{equation}\label{eq:laplacian_in_proof}
m_v q_v E_v^2 + \sum\nolimits_{v'} m_{v'}q_{v'} = \val_{\Gamma_\pi}(v) + 2g(E_v) - 2 + l_v - p_v\,,
\end{equation}
where the sum runs over the vertices $v'$ of $\Gamma_\pi$ adjacent to $v$.
Then the equality we are after follows from the fact that
$
E_v\cdot \sum_{v'\in V(\Gamma_\pi)} E_{v'} = E_v^2 + \val_{\Gamma_\pi}(v),
$
that $l_v = - E_v\cdot Z_{\max}(X,0)$ by definition of $l_v$,
that $Z_{\max}(X,0)=Z_{\min}$ by part \ref{thm:main_L-vector} of the theorem, and that
$
E_v\cdot Z_{\Gamma_\pi} = - E_v^2 + 2g(E_v) - 2
$
by definition of $Z_{\Gamma_\pi}$.
Whenever $v$ is an $\cal L$-node, we have $q_v=1$ and $ {q_{v'}}  =1+1/{ {m_{v'}}}$ by Proposition~\ref{prop:inner_rate_LNE}.
Therefore, the left hand side of equation~\eqref{eq:laplacian_in_proof} becomes equal to $E_v\cdot Z_{\max}(X,0) + \sum\nolimits_{v'} 1=-l_v+\val_{\Gamma_\pi}(v)$, and we deduce that $p_v= 2\big(g(E_v) +l_v - 1\big)$.
\end{proof}

\section{End of the proof of Theorem~\ref{thm:main}}
\label{sec:end_proof_main}

In this section we conclude the proof of Theorem~\ref{thm:main}, showing parts \ref{thm:main_P-nodes} and \ref{thm:main_Nash_transform}, which means that we are interested in determining the $\cal P$-nodes of the LNE surface germ $(X,0)$.

\medskip 

We begin with two definitions.
Let $\pi$ denote a good resolution of $(X,0)$ that factors through the blowup of its maximal ideal and through its Nash transform, let $v$ be a vertex of $\Gamma_\pi$, and let $e=[v,v']$ be an edge of $\Gamma_\pi$ adjacent to $v$.
We say that $e$ is \emph{incoming} at $v$ if we have $q_v>q_{v'}$.
Following \cite[Definition~5.3]{Spivakovsky1990}, we say that it is a \emph{central node} of $\Gamma_\pi$ if $v$ has at least two incoming edges.

Observe that the $\cal L$-nodes of $\Gamma_\pi$ have no incoming edges, and that the number of incoming edges at a vertex $v$ does not depend on the choice of a resolution such that $v$ is a vertex of the associated dual graph, since the inner rates increase along any new edge introduced by blowing up a smooth point.
In the LNE case, we can prove the following more precise result, building on the local degree formula \cite[Lemma 4.18]{BelottodaSilvaFantiniPichon2019}.

\begin{lemma} \label{lem:degree_inward_edges}
	Let $(X,0)$ be a LNE normal surface germ, let $\pi$ be a good resolution of $(X,0)$ that factors through its Nash transform, and let $v$ be a vertex of $\Gamma_\pi$.
	Then the local degree $\deg(v)$ at $v$ equals $l_v$ if $v$ is an $\cal L$-node of $\Gamma_\pi$, or the number of incoming edges of $\Gamma_\pi$ at $v$ otherwise.
\end{lemma}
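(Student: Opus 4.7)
The plan is to deduce the lemma from the local degree formula \cite[Lemma~4.18]{BelottodaSilvaFantiniPichon2019}, combined with the LNE-specific results already established in this paper: the multiplicity invariance under $\widetilde\ell$ from Lemma~\ref{lem:projections_LNE}.\ref{lem:projections_LNE_multiplicities} and the inner rate formula $q_v = d(v,V_{\cal L})+1$ from Proposition~\ref{prop:inner_rate_LNE}. Note that, by the analogous formula on the plane, $\widetilde\ell$ preserves inner rates in the sense that $q_{\widetilde\ell(v)} = q_v$ for every divisorial point $v$.

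First, I would treat the $\cal L$-node case directly, without appealing to the local degree formula. Both $v$ and $\widetilde\ell(v) = w_0$ have multiplicity $1$ by Proposition~\ref{prop:reduced tangent cone}(i) (applied on $(X,0)$ on the one hand, and trivially on $(\C^2,0)$ on the other), and $v$ has no incoming edge since $q_v = 1$ is minimal. A generic curvette $\gamma^*$ of $E_{w_0}$ projects via $\sigma_{\ell,v}$ to a generic line germ $\gamma$ through $0$ in $(\C^2,0)$. By the definition of an $\cal L$-node and again by Proposition~\ref{prop:reduced tangent cone}(i), among the components of $\ell^{-1}(\gamma)$ there are exactly $l_v$ smooth irreducible branches whose strict transforms by $\pi_{\ell,v}$ are curvettes of $E_v$. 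Each such branch maps with degree $1$ onto $\gamma$ since both have multiplicity $1$, so the cover $\ell_v$ has exactly $l_v$ sheets and $\deg(v) = l_v$.

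For the second case, where $v$ is not an $\cal L$-node, the key observation is that $w := \widetilde\ell(v)$ is a divisorial point of $\NL(\C^2,0)$ distinct from $w_0$ and therefore has a unique incoming edge in $\NL(\C^2,0)$: indeed, the inner rate strictly decreases along a single direction from $w$ (towards $w_0$), as follows from the plane version of Proposition~\ref{prop:inner_rate_LNE} and the tree structure of $\NL(\C^2,0)$. Using the invariance of inner rates under $\widetilde\ell$, an edge $e = [v,v']$ of $\Gamma_\pi$ is incoming at $v$ if and only if its image $\widetilde\ell(e)$ is the unique incoming edge at $w$; this uses Proposition~\ref{prop:morphism_graph_local} and Corollary~\ref{cor:morphism_graphs} to guarantee that $\widetilde\ell$ sends edges of (a suitable refinement of) $\Gamma_\pi$ to edges in $\NL(\C^2,0)$ without folding. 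The local degree formula of \cite[Lemma~4.18]{BelottodaSilvaFantiniPichon2019} then identifies $\deg(v)$ with the number of edges at $v$ in $\Gamma_\pi$ which cover the unique incoming edge of $\widetilde\ell(v)$, yielding the required count.

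The main obstacle is the second step: specializing the local degree formula of \cite[Lemma~4.18]{BelottodaSilvaFantiniPichon2019}, which expresses $\deg(v)$ analytically in terms of multiplicities and inner rates of $v$ and of its neighbors, to the purely combinatorial statement that exactly one sheet of the cover $\ell_v$ corresponds to each incoming edge at $v$ and that non-incoming edges contribute nothing. This requires careful bookkeeping that the multiplicity invariance $m_v = m_{\widetilde\ell(v)}$ (together with $m_{v'} = m_{\widetilde\ell(v')}$ at each neighbor) makes each contribution in the formula equal to $1$ precisely on incoming edges, and $0$ otherwise.
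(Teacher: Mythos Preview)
Your overall architecture matches the paper's proof: treat $\cal L$-nodes by a direct curvette argument, and for the remaining vertices invoke the local degree formula \cite[Lemma~4.18]{BelottodaSilvaFantiniPichon2019} after using Corollary~\ref{cor:morphism_graphs} to realize $\widetilde\ell$ as a morphism of graphs and noting that $\widetilde\ell(v)$ has a unique incoming edge.

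The one substantive divergence is in how you dispose of what you call the ``main obstacle''. You propose to use the multiplicity invariance of Lemma~\ref{lem:projections_LNE}.\ref{lem:projections_LNE_multiplicities} to reduce each edge contribution in the local degree formula to~$1$. The paper instead uses Lemma~\ref{lem:projections_LNE}.\ref{lem:projections_LNE_degree}: since $\pi$ factors through the Nash transform, no divisorial point in the interior of an edge of $\Gamma_\pi$ adjacent to $v$ is a $\cal P$-node, so by Lemma~\ref{lem:projections_LNE}.\ref{lem:projections_LNE_degree} the local degree $\deg(e)$ along every such edge equals~$1$. The formula of \cite[Lemma~4.18]{BelottodaSilvaFantiniPichon2019} then collapses immediately to the number of edges mapping to the unique incoming edge at $\widetilde\ell(v)$, with no bookkeeping needed. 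This is both shorter and conceptually cleaner than tracking multiplicities of $v$ and all its neighbours; in particular, the ``contribution~$0$ on non-incoming edges'' you worry about is automatic once one knows each edge has degree~$1$ and that $\widetilde\ell$ preserves inner rates.
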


\begin{proof}
Denote by $\ell :(X,0) \to (\C^2,0)$ a generic projection. 
Assume first that $v$ is an $\cal L$-node. 
In this case, we can compute the degree directly via the definition, using a generic linear form $h:(X,0) \to (\C,0)$ that factors through $\ell$ (that is, such that there exists a linear projection $\ell_h:(\C^2,0)\to (\C,0)$ satisfying $h= \ell_h \circ \ell$). 
More precisely, let $\gamma$ be the curve $h^{-1}(0) \cap X$. Since $h$ factors through $\ell$, we know that $\ell(\gamma)$ is a line in $(\C^2,0)$. 
Now, $l_v$ corresponds to the number of distinct irreducible components of the strict transform of $\gamma$ by $\pi$ that intersect $E_v$. 
Since each of those components is smooth, we conclude that $\deg (v)=l_v$ by the definition of degree.
Assume now that $v$ is not an $\cal L$-node. 
By Corollary~\ref{cor:morphism_graphs}, there exist a refinement $\Gamma_{\pi'}$ of $\Gamma_\pi$ and a sequence of point blowups $\sigma:Y \to \C^2$ such that $\widetilde{\ell}$ induces a morphism of graphs $\Gamma_{\pi'} \to \Gamma_{\sigma}$.
In particular, since $\widetilde\ell$ respects inner rates (see again \cite[Lemma~3.2]{BelottodaSilvaFantiniPichon2019}), all edges of $\Gamma_{\pi'}$ that are incoming at $v$ are sent to the unique edge of $\Gamma_\sigma$ that is incoming at $\widetilde\ell(v)$ (its uniqueness can for example be seen as a consequence of Proposition~\ref{prop:inner_rate_LNE}).
The lemma follows now by applying the formula of \cite[Lemma 4.18]{BelottodaSilvaFantiniPichon2019}, observing that the local degree $\deg (e)$ along every edge adjacent to $v$ equals 1 by Lemma~\ref{lem:projections_LNE}.\ref{lem:projections_LNE_degree} and that, even if further point blowups may be needed to pass from $\pi'$ to a resolution adapted to $\ell$, no new edge can be incoming at $v$.
\end{proof}

We can now complete the proof of our main theorem.

\begin{proof}[End of proof of Theorem~\ref{thm:main}]
Let $(X,0)$ be a LNE normal surface germ, denote by $\pi \colon X_{\pi} \to X$ be the minimal good resolution of $(X,0)$, let $\pi' \colon X_{\pi'} \to X$ the minimal one that factors through the Nash transform, and let $v$ be a vertex of $\Gamma_{\pi'}$.
By combining the lemmas~\ref{lem:projections_LNE}.\ref{lem:projections_LNE_degree} and \ref{lem:degree_inward_edges}, we obtain the following:
\begin{equation}\label{eqn:star}
\mbox{$v$ is a $\cal P$-node of $\Gamma_{\pi'}$ if and only if either $l_v>1$, or $v$ is a central node of $\Gamma_{\pi'}$}\tag{$*$}
\end{equation}
which establishes part \ref{thm:main_P-nodes} of the theorem. 
We claim that this also implies that the $\cal P$-nodes of $\Gamma_{\pi'}$ are already on the graph $\Gamma_{\pi} \subset \Gamma_{\pi'}$ (possibly in the interior of some edge).
Indeed, if $v$ is a vertex of $\Gamma_{\pi'} \setminus \Gamma_{\pi}$, since $\pi'$ is obtained from $\pi$ by a sequence of point blowups, we deduce from Proposition \ref{prop:inner_rate_LNE} that there is only one incoming edge at $v$ (observe that all $\cal L$-nodes of $(X,0)$ are contained in $\Gamma_\pi$ thanks to Proposition~\ref{prop:reduced tangent cone}), so that the claim follows from \eqref{eqn:star}.
Therefore we obtain $\pi'$ by successive blowups of double points on the exceptional divisor of $\pi$.
Now, let $e$ be an edge of $\Gamma_\pi$.
If $e=[v,v']$ contains no $\cal P$-node, then it is also an edge of $\Gamma_{\pi'}$, and by applying Proposition~\ref{prop:morphism_graph_local} to its closure we deduce that its image through the map induced by a generic projection $\ell\colon (X,0) \to (\C^2,0)$ is an edge $\widetilde\ell(e)=[\widetilde\ell(v),\widetilde\ell(v')]$ of $\Gamma_{\sigma_\ell}$.
Therefore we have $|q_{\widetilde\ell(v)}-q_{\widetilde\ell(v')}|=d\big(\widetilde\ell(v),\widetilde\ell(v')\big)$, as it can for example be seen by Proposition~\ref{prop:inner_rate_LNE}, and since the inner rate map commutes with $\widetilde\ell$, and $d(v,v')=d\big(\widetilde\ell(v),\widetilde\ell(v')\big)$ by Lemma~\ref{lem:projections_LNE}.\ref{lem:projections_LNE_multiplicities}, we deduce that $|q_v-q_{v'}|=d(v,v')$.
This shows that if $|q_v-q_{v'}|<d(v,v')$ then $e$ must contain a $\cal P$-node.
Conversely, if $e$ contains a $\cal P$-node $w$ then, as it can only contain one $\cal P$-node, $e$ is folded in two by the projection $\widetilde\ell$.
It follows that, with respect to the distance $d$, the inner rate grows linearly with slope 1 from $v$ to $w$, and then decreases linearly with slope 1 from $w$ to $v'$, so that $|q_v-q_{v'}|<d(v,v')$.
We also deduce that $d(v,v') = d(v,w) + d(w,v') = (q_w-q_v) + (q_w - q_{v'})$, and therefore $q_w = \big(d(v,v')+q_v+q_{v'}\big)/2$.
This reasoning can be repeated after blowing up the double point of $\pi^{-1}(0)$ corresponding to $e$, and is therefore sufficient to establish part \ref{thm:main_Nash_transform} of Theorem~\ref{thm:main} and thus conclude its proof.
\end{proof}

%
%


\section{Discriminant curves}
\label{sec:discriminant}

In this section we focus our attention on the discriminant curve of a generic plane projection of a LNE normal surface germ.
We describe those curves completely, proving in Theorem~\ref{thm:discriminant_precise} a more precise version of Theorem~\ref{thm:main2} from the introduction.
In order to do so, we need to pursue in greater depth the study of the properties of the map $\widetilde\ell$ already undertaken in Section~\ref{sec:key_lemma_multiplicities}.

\medskip

Let $(X,0)$ be a LNE normal surface singularity, let $\pi \colon X_{\pi} \to X$ be the minimal good resolution of $(X,0)$ which factors through its Nash transform, let $\ell \colon (X,0) \to (\C^2,0)$ be a generic projection, let $\widetilde\ell\colon \NL(X,0)\to \NL(\C^2,0)$ be the induced morphism, and, as in Section~\ref{sec:key_lemma_multiplicities}, let $\sigma_{\ell} \colon Y_{\sigma_\ell} \to \C^2$ be the minimal sequence of point blowups of $(\C^2,0)$ such that $\widetilde\ell\big(V(\Gamma_\pi)\big)\subset V(\Gamma_{\sigma_\ell})$.
We call \emph{$\Delta$-node} of $\Gamma_{\sigma_{\ell}}$ any vertex $v$ which is the image by $\widetilde{\ell}$ of a $\cal P$-node of $\Gamma_\pi$, and we call \emph{root vertex} of $\Gamma_{\sigma_{\ell}}$ the image by $\widetilde{\ell}$ of the $\cal L$-nodes of $\Gamma_\pi$.
Observe that the root vertex of $\Gamma_{\sigma_\ell}$ is the divisorial point associated with the exceptional divisor of blowup of $\C^2$ at $0$, which can also be seen as the unique $\cal L$-node of $(\C^2,0)$.
Moreover, a vertex $v$ of $\Gamma_{\sigma_\ell}$ is a $\Delta$-node if and only if the associated exceptional component $E_v\subset Y_{\sigma_\ell}$ intersects the strict transform $\Delta^*$ of the discriminant curve $\Delta$ of $\ell$ via ${\sigma_\ell}$.

The following proposition explains that for LNE surfaces the morphism $\sigma_\ell$ coincides with the minimal good embedded resolution of the discriminant curve $\Delta$.

\begin{proposition} \label{prop:discriminant_resolution}
	Let $(X,0)$ be a LNE normal surface germ, let $\ell \colon (X,0)\to (\C^2,0)$ be a generic projection of $(X,0)$, and let $\pi\colon X_\pi\to X$ be the minimal good resolution of $(X,0)$ that factors through its Nash transform.
	Consider the three finite sequences of point blowups of $(\C^2,0)$ defined as follows:
	\begin{itemize}
		\item $\sigma_\Delta \colon Y_{\sigma_\Delta} \to \C^2$ is the minimal good embedded resolution of the discriminant curve $\Delta$ associated with $\ell$\,;
		
		\item $\sigma_\Omega \colon Y_{\sigma} \to \C^2$ is the minimal sequence which resolves the base points of the family of projected generic polar curves $\{\ell(\Pi_{\cal D})\}_{\cal D\in \Omega}$\,;
		
		\item $\sigma_\ell \colon Y_{\sigma_\ell} \to \C^2$ is the minimal sequence such that $V(\Gamma_{\sigma_\ell})$ contains $\widetilde\ell\big(V(\Gamma_\pi)\big)$.
	\end{itemize}
	Then $\sigma_\Delta$, $\sigma_\Omega$, and $\sigma_\ell$ coincide.
\end{proposition}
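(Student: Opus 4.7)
The plan is to prove coincidence by establishing the inequalities $\sigma_\Omega=\sigma_\Delta\preceq\sigma_\ell$ and $\sigma_\ell\preceq\sigma_\Delta$. The equality $\sigma_\Omega=\sigma_\Delta$ should follow from the strong equiresolution of the family $\{\ell(\Pi_\cal D)\}_{\cal D\in\Omega}$ recalled in Section \ref{sec:lemma generic}: by Teissier's construction, the restriction of $\sigma_\Omega$ to the fiber over $\cal D_0$ is an embedded resolution of $\Delta$, while conversely the identifications $\eta_{\cal D_0,\cal D}$ supplied by Lemma \ref{lem:generic projection} transport the minimal resolution $\sigma_\Delta$ along the family and resolve every member. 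The two minimality assertions then force the two morphisms to coincide.

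The core of the proof is the pair of inequalities $\sigma_\ell\preceq\sigma_\Delta$ and $\sigma_\Delta\preceq\sigma_\ell$. For the first, I would show that every divisorial point $\widetilde\ell(v)$ with $v\in V(\Gamma_\pi)$ lies in $V(\Gamma_{\sigma_\Delta})$. If $v$ is an $\cal L$-node, then $\widetilde\ell(v)$ equals the root vertex of $\sigma_\Delta$ and the claim is trivial; if $v$ is a $\cal P$-node, then $\widetilde\ell(v)$ is a $\Delta$-node by definition and is thus a vertex of $\Gamma_{\sigma_\Delta}$ since the strict transform of $\Delta$ meets $E_{\widetilde\ell(v)}$. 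For an intermediate vertex $v$, part \ref{thm:main_P-nodes} of Theorem \ref{thm:main} combined with Proposition \ref{prop:inner_rate_LNE} yields a geodesic in $\Gamma_\pi$ from some $\cal L$-node through $v$ to some $\cal P$-node $v_P$ along which inner rates strictly increase. Applying Proposition \ref{prop:morphism_graph_local} to the closure of the connected component of $\Gamma_\pi$ minus its $\cal P$-nodes that contains $v$, I would map this geodesic isomorphically onto a path in $\NL(\C^2,0)$ from the root to $\widetilde\ell(v_P)$, passing through $\widetilde\ell(v)$; since $\NL(\C^2,0)$ is a tree and $\widetilde\ell(v_P)$ is already a vertex of $\Gamma_{\sigma_\Delta}$, the minimal sequence of blowups creating $\widetilde\ell(v_P)$ contains every intermediate divisorial point on this unique geodesic, and in particular contains $\widetilde\ell(v)$.

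For the converse $\sigma_\Delta\preceq\sigma_\ell$, I would verify directly that $\sigma_\ell$ is an embedded resolution of $\Delta$. After refining $\pi$ to a resolution $\pi'$ as in Corollary \ref{cor:morphism_graphs} so that the lift $\widehat\ell\colon X_{\pi'}\to Y_{\sigma_\ell}$ exists, each irreducible component of the polar curve $\Pi=\Pi_{\cal D_0}$ has its strict transform a curvette of some $E_v$ at a $\cal P$-node $v$, and its image under $\widehat\ell$ is the strict transform of the corresponding component of $\Delta=\ell(\Pi)$, meeting $E_{\widetilde\ell(v)}$ at a smooth point. The main obstacle I anticipate here is ensuring that distinct components of $\Delta^*$ land at distinct smooth points of the exceptional divisor of $Y_{\sigma_\ell}$ and that these intersections are transverse, so that no further base-point blowup is needed. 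This step should follow by combining the local-degree information provided by Lemma \ref{lem:projections_LNE}.\ref{lem:projections_LNE_degree} (which controls the cover $\widehat\ell$ above each $\cal P$-node) with the bi-Lipschitz equivalence between components of $\Pi$ and their images in $\Delta$ supplied by the generic-projection formalism of Teissier used inside the proof of Lemma \ref{lem:projections_LNE}.
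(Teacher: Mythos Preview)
Your Step~3 is essentially the paper's argument that $\sigma_\ell$ is an embedded resolution of $\Delta$: each polar component $\Pi_0$ is a curvette of $E_v$, so $\mult(\Pi_0)=m_v$; Teissier's bilipschitz property gives $\mult(\Delta_0)=\mult(\Pi_0)$; and Lemma~\ref{lem:projections_LNE}.\ref{lem:projections_LNE_multiplicities} (not~\ref{lem:projections_LNE_degree}) gives $m_v=m_{\widetilde\ell(v)}$, forcing $\Delta_0^*$ to be a curvette of $E_{\widetilde\ell(v)}$.

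The genuine gap is in Step~1, and it propagates into Step~2.  The strong equiresolution of Section~\ref{sec:lemma generic} treats the family $\{\Delta_{\cal D}=\ell_{\cal D}(\Pi_{\cal D})\}_{\cal D\in\Omega}$, \emph{not} the family $\{\ell(\Pi_{\cal D})\}_{\cal D\in\Omega}$ with $\ell$ fixed that defines $\sigma_\Omega$, so Lemma~\ref{lem:generic projection} is not directly about the latter.  More seriously, even granting equisingularity of the correct family, resolving the base points of a family and embedding-resolving a single member are unrelated operations in general, and their coincidence here is exactly the LNE-specific content of the proposition.  Your argument in Step~2 that ``the strict transform of $\Delta$ meets $E_{\widetilde\ell(v)}$, hence $\widetilde\ell(v)\in V(\Gamma_{\sigma_\Delta})$'' is the same gap restated: the strict transform of a branch of $\Delta$ meets many exceptional components in large models, but only one of them is its terminal vertex in the \emph{minimal} resolution, and you have not shown that $\widetilde\ell(v)$ is that one.  (Separately, the claim that the minimal sequence creating a single point $\widetilde\ell(v_P)$ contains every intermediate divisorial point on the geodesic from the root is false; the correct statement is Proposition~\ref{prop:morphism_graph_local}.\ref{prop:morphism_graph_local_1}, which needs \emph{all} boundary $\cal P$-nodes of the component as input and then yields $\sigma_\ell=\sigma_\Omega$ directly.)

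The paper closes the gap differently.  After establishing $\sigma_\ell=\sigma_\Omega$ via Proposition~\ref{prop:morphism_graph_local}.\ref{prop:morphism_graph_local_1} and then $\sigma_\Omega\succeq\sigma_\Delta$ as in your Step~3, it proves $\sigma_\Delta\succeq\sigma_\Omega$ by contradiction: if $\sigma_\Delta$ failed to resolve the base points, some $\Delta$-node would lie at the tip of a bamboo of $\Gamma_{\sigma_\Omega}\setminus\Gamma_{\sigma_\Delta}$; running the Hirzebruch--Jung process over $\sigma_\Omega$ produces a resolution of $(X,0)$ in which that bamboo lifts to a bamboo with a $\cal P$-node at its extremity, hence with a unique incoming edge, contradicting the characterisation~\eqref{eqn:star} from Section~\ref{sec:end_proof_main}.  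This is where the LNE hypothesis actually enters, and no amount of equiresolution bookkeeping can replace it.
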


\begin{proof} 
Let us assume for now that the graph $\Gamma_\pi$ does not consist of a single vertex.
We begin by showing that $\sigma_\ell$ and $\sigma_\Omega$ coincide. 
Denote by $S$ be the set of $\cal P$-nodes of $\Gamma_{\pi}$ and by $W_1, \ldots, W_r$ the connected components of $\Gamma_{\pi} \setminus S$.
For each $i=1,\ldots, r$, following the notation of Proposition~\ref{prop:morphism_graph_local}, let $\Gamma_{0,i}$ be the subgraph of $\Gamma_\pi$ induced on the topological closure of $W_i$ in $\Gamma_{\pi}$, and let $\sigma_{\Gamma_{0,i}} \colon Y_{\sigma_{\Gamma_{0,i}}} \to \C^2$ be the minimal sequence of point blowups $(\C^2,0)$ such that $\widetilde{\ell} \big(V (\Gamma_{0,i})\big) \subset V(\Gamma_{\sigma_{\Gamma_{0,i}}})$.
Since $V(\Gamma_\pi)=\bigcup_i V(\Gamma_{0,i})$, it follows that $\sigma_\ell$ is the minimal sequence of point blowups of $(\C^2,0)$ that factors through all the maps $\sigma_{\Gamma_{0,i}}$.
On the other hand, by Proposition~\ref{prop:morphism_graph_local}.\ref{prop:morphism_graph_local_1}, $\sigma_{\Gamma_{0,i}}$ coincides with the minimal sequence $\sigma_{\Omega, i}$ of point blowups of $(\C^2,0)$ such that $\widetilde{\ell} \big(V (\partial W_i)\big) \subset V(\Gamma_{\sigma_{\Gamma_{0,i}}})$.
Since $\bigcup_i \partial W_i=S$, this implies that $\sigma_\ell$ is the minimal sequence of blowups of $\C^2$ over $0$ such that $V(\Gamma_{\sigma_\ell})$ contains the set $\widetilde\ell(S)$ of the $\Delta$-nodes, which is by definition $\sigma_\Omega$.

Let us now prove that $\sigma_\Omega$ factors through $\sigma_\Delta$ by showing that it is a good embedded resolution of the curve $\Delta$.
Assume by contradiction that this is not the case, so that there exist a $\Delta$-node $w$ and a component $\Delta_0$ of $\Delta$ whose strict transform by $\sigma_\Omega$, while intersecting $E_w$ at a smooth point $p$ of an exceptional component $E_w$, is not a curvette of $E_w$.
This implies that the multiplicity of $\Delta_0$ is strictly greater than $m_w$. 
Let $\Pi_0$ be the component of the polar curve $\Pi$ of $\ell$ such that $\Delta_0 = \ell(\Pi_0)$.
Since $\pi$ is minimal, the strict transform of $\Pi_0$ by $\pi$ is a curvette on an exceptional component $E_v$ such that $\widetilde\ell(v)=w$, so that the multiplicity of $\Pi_0$ equals $m_v$.
Since $\widetilde\ell(v)=w$, by Lemma~\ref{lem:projections_LNE}.\ref{lem:projections_LNE_degree} we have $m_{v} = m_w$, and therefore $\mult(\Delta_0) > \mult(\Pi_0)$.
However, the restriction $\ell|_{\Pi_0} \colon \Pi_0 \to \Delta_0$ is a bilipschitz homeomorphism with respect to the outer metric by \cite[pp. 352-354]{Teissier1982}, so that in particular we have $\mult(\Delta_0) = \mult(\Pi_0)$, yielding a contradiction.
This proves that $\sigma_\Omega$ is a good embedded resolution of $\Delta$.

It is now sufficient to show that $\sigma_{\Delta}$ also resolves the base points of the family $\{\ell(\Pi_{\cal D})\}_{\cal D\in \Omega}$, so that it factors through $\sigma_\Omega$.	
Assume by contradiction that this is not the case, so that there exists a component $\Delta_0$ of $\Delta$ whose strict transform by $\sigma_{\Delta}$ meets the exceptional divisor $\sigma_\Delta^{-1}(0)$ at a (smooth) point $p$ which is a base point of the family $\{\ell(\Pi_{\cal D})\}_{\cal D\in \Omega}$. 
Let $w$ be the vertex of $\Gamma_{\sigma_{\Omega}}$ such that $E_w$  is the irreducible component of $\sigma_\Omega^{-1}(0)$ that contains $p$. 
The base point $p$ is resolved by a sequence of point blowups $\delta$ which creates a bamboo (that is, a chain of two-valent vertices ending with a univalent vertex) $B$ living inside $\Gamma_{\sigma_\Omega}\setminus \Gamma_{\sigma_\Delta}$, stemming from the vertex $w$ and having the corresponding $\Delta$-node $w'$ at its extremity.
Since  $\sigma_{\Omega}$ is a resolution of $\Delta$, we can construct a resolution of $(X,0)$ by performing the Hirzebruch--Jung resolution process starting from the morphisms $\ell$ and $\sigma_{\Omega}$. 
For this, we begin by taking the strict transform of $(X,0)$ by the fiber product of $\ell$ and $\sigma_{\Omega}$ and normalize it. 
Since $\ell$ is a cover which ramifies over the discriminant curve $\Delta$, we obtain a normal surface $Z$ and a finite cover ${\ell'} \colon Z \to Y_{\sigma_{\Omega}}$ which ramifies over the total transform $\sigma_{\Omega}^{-1}(\Delta)$ of the discriminant curve $\Delta$. 
Resolving the singularities of $Z$, we obtain a resolution $\pi' \colon X_{\pi'} \to X$ of $(X,0)$ which we can describe as follows.
Since $\Gamma_{\sigma_{\Omega}}$ contains all $\Delta$-nodes, then $\Gamma_{\pi'}$ contains all $\cal P$-nodes of $(X,0)$, and therefore $\pi'$ factors through $\pi$.
Since by the previous part $\sigma_\Omega$ factors through $\sigma_\Delta$, the total transform $\sigma_{\Omega}^{-1}(\Delta)$ has normal crossings in $Y_{\sigma_\Omega}$.
Moreover, each singularity of $Z$ is branched over a double point of  $\sigma_{\Omega}^{-1}(\Delta)$ and has a resolution whose exceptional divisor is a string of rational curves, and the strict transform of the branching locus consists of two curvettes, one at each extremity of the string.
This implies that the bamboo $B$ lifts via $\widetilde{\ell}$ to a bamboo $B'$ in the resolution graph $\Gamma_{\pi'}$ with a $\cal P$-node at its extremity.
This gives a $\cal P$-node with a unique inward edge in $\Gamma_{\pi'}$, and therefore a unique inward edge in $\Gamma_{\pi}$, contradicting the statement~\eqref{eqn:star} appearing on page~\pageref{eqn:star}.
	
In the special case where $\Gamma_\pi$ consists of a single vertex, then the morphisms $\sigma_\Delta$, $\sigma_\Omega$, and $\sigma_\ell$ all coincide with a single blowup of $\mathbb C^2$ along its origin.
To see this, the only part which is not immediate is the factorization of $\sigma_\Omega$ through $\sigma_\Delta$, but the argument given above remains valid in this special case, and thus the proof of the proposition is complete.
\end{proof}

Recall that, as showed by Example~\ref{ex:pi_not_minimal_multiplicities}, the map $\widetilde{\ell} \colon  \Gamma_{\pi} \to \Gamma_{\sigma_{\ell}}$ may fail to be surjective.
As a first step to better understand the situation, the following proposition, which refines the techniques we employed in the course of the proof of Lemma \ref{lem:projections_LNE}, allows us to describe $\widetilde\ell$ more explicitly.

\begin{lemma}\label{lem:projections_LNE_paths}
Let $(X,0)$ be a LNE normal surface germ, let $\ell\colon (X,0)\to (\C^2,0)$ be a generic projection, let $\pi \colon X_{\pi} \to X$ be the minimal good resolution of $(X,0)$ which factors through its Nash transform, and let $v$ and $v'$ be two divisorial points of $\Gamma_{\pi} \subset \NL(X,0)$.
Then $\widetilde\ell(v)=\widetilde\ell(v')$ if and only if the two following conditions are satisfied:
\begin{enumerate}
\item \label{lem:projections_LNE_paths_condition1} $q_v = q_{v'}$\,;
	
\item \label{lem:projections_LNE_paths_condition2} there exists a path $\tau$ in $\Gamma_{\pi}$ between $v$ and $v'$ such that the inner rate of any point in $\tau$ is greater than or equal to $q_{v}$.
\end{enumerate}
\end{lemma}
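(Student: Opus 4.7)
The plan is to exploit two central properties of the morphism $\widetilde\ell$: the preservation of inner rates \cite[Lemma~3.2]{BelottodaSilvaFantiniPichon2019}, together with the fact that on $\NL(\C^2,0)$ the inner rate at any divisorial point equals $d(\cdot, w_0) + 1$, where $w_0$ is the unique $\cal L$-node of $(\C^2,0)$ (by \cite[Lemma~5.5]{BelottodaSilvaFantiniPichon2019}). This second property makes $\NL(\C^2,0)$ into a tree rooted at $w_0$ in which the inner rate increases strictly along every ray emanating from $w_0$.

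For the ``if'' direction, I would consider the continuous image $\widetilde\ell(\tau) \subset \NL(\C^2,0)$. Its endpoints $\widetilde\ell(v)$ and $\widetilde\ell(v')$ have inner rate exactly $q_v$, and by the preservation of inner rates, every point on $\widetilde\ell(\tau)$ has inner rate $\geq q_v$. The sublevel set $\{u \in \NL(\C^2,0) : q_u \geq q_v\}$ is a disjoint union of closed subtrees, each containing a unique point of minimum inner rate $q_v$ at its root. Since the connected image $\widetilde\ell(\tau)$ lies entirely in one such subtree, and both of its endpoints achieve the minimum inner rate there, they must both coincide with the root of that subtree, yielding $\widetilde\ell(v) = \widetilde\ell(v')$.

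For the ``only if'' direction, condition (i) is immediate from inner rate preservation. To establish (ii), I would set $w = \widetilde\ell(v)=\widetilde\ell(v')$ and denote by $M_w \subset \NL(\C^2,0)$ the subtree consisting of $w$ together with all points whose geodesic to $w_0$ passes through $w$. After refining $\pi$ to $\pi'$ as in Corollary~\ref{cor:morphism_graphs} so that $\widetilde\ell|_{\Gamma_{\pi'}}$ becomes a morphism of graphs, the restriction $\widetilde\ell^{-1}(M_w)\cap\Gamma_{\pi'}\to M_w$ is a branched cover whose branching occurs precisely at the $\cal P$-nodes (by Lemma~\ref{lem:projections_LNE}.\ref{lem:projections_LNE_degree} and Proposition~\ref{prop:morphism_graph_local}). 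The aim is to produce a path from $v$ to $v'$ within $\widetilde\ell^{-1}(M_w)$, which by inner rate preservation will automatically satisfy $q \geq q_v$ throughout. Such a path can be constructed by ascending from $v$ along lifts of suitable paths in $M_w$, crossing through $\cal P$-nodes where sheets get identified, and then descending to $v'$.

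The main obstacle will be verifying rigorously that all preimages of $w$ in $\Gamma_{\pi'}$ lie in the same connected component of $\widetilde\ell^{-1}(M_w)$, rather than in several disjoint branched covers above $M_w$. I expect to resolve this via a sheet-counting argument based on the local degree formula of \cite[Lemma~4.18]{BelottodaSilvaFantiniPichon2019}, using the fact that the incoming edges at any $\cal P$-node $v^*$ with $\widetilde\ell(v^*)\in M_w$ are all identified by $\widetilde\ell$ onto a single edge of $\Gamma_{\sigma_\ell}$. This should allow one to inductively merge the components of $\widetilde\ell^{-1}(M_w)$ by traversing $\cal P$-nodes above $w$, and thereby to connect any two preimages of $w$ through a path contained in $\widetilde\ell^{-1}(M_w)$.
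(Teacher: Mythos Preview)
Your ``if'' direction is correct and essentially identical to the paper's: both use that $\NL(\C^2,0)$ is a tree on which the inner rate is strictly increasing away from $w_0$, so that the image of a path satisfying condition~(ii) cannot touch any point of inner rate below $q_v$, forcing $\widetilde\ell(v)=\widetilde\ell(v')$.

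For the ``only if'' direction, condition~(i) is immediate in both approaches. For condition~(ii), however, the paper takes a completely different route from yours. It argues by contradiction using the LNE hypothesis \emph{directly as a metric statement}: assuming (ii) fails, it picks curvettes $\widehat\gamma$ of $E_v$ and $\widehat\gamma'$ of $E_{v'}$ lying over the same curvette of $E_{\widetilde\ell(v)}$, invokes \cite[Proposition~15.3]{NeumannPedersenPichon2020a} to conclude $q_{\inn}(\widehat\gamma,\widehat\gamma')<q_v$, and then uses a second generic projection $\ell_{\cal D'}$ (generic also for $\widehat\gamma\cup\widehat\gamma'$) together with Lemma~\ref{lem:generic projection} to show $q_{\out}(\widehat\gamma,\widehat\gamma')\geq q_v$. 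This contradicts LNE.

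Your proposed topological route has a genuine gap at exactly the point you flag. Showing that all preimages of $w$ in $\Gamma_{\pi'}$ lie in a single connected component of $\widetilde\ell^{-1}(M_w)$ is not a consequence of the local degree formula or of Proposition~\ref{prop:morphism_graph_local} alone: a branched cover of a tree can perfectly well be disconnected (think of a trivial double cover), and the fact that local degrees equal~$1$ away from $\cal P$-nodes only says the map is locally one-to-one, not that distinct sheets eventually merge. Your sheet-counting sketch would need to show that enough $\cal P$-nodes sit above $M_w$, in the right configuration, to connect any two sheets---and it is precisely here that the LNE hypothesis must do real work. The paper's argument sidesteps this entirely by translating the failure of~(ii) into a concrete pair of arcs witnessing $q_{\inn}<q_{\out}$, which is both shorter and makes the role of LNE transparent.
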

\begin{proof}
Let us begin by proving the ``only if'' part of the statement.
Assume that $\widetilde{\ell}(v) = \widetilde{\ell}(v')$. 
Then $q_v=q_{v'}$ since both inner rates are equal to $q_{\widetilde\ell(v)}$ by \cite[Lemma~3.2]{BelottodaSilvaFantiniPichon2019}. 
Assume by contradiction that the condition \ref{lem:projections_LNE_paths_condition2} is not satisfied, and let $\gamma$ be a curve in $(\C^2,0)$ which is the image through $\sigma_{\ell}$ of a curvette of $E_{\widetilde{\ell}(v)}$. 
Let $\widehat{\gamma}$ (respectively $\widehat{\gamma}'$) be a component of $\ell^{-1}(\gamma)$ which is the image of a curvette of $E_v$ (respectively $E_{v'}$) via a suitable resolution factoring through $\pi$.
Since $\Gamma_\pi$ is path connected but \ref{lem:projections_LNE_paths_condition2} is not satisfied, then the inner contact $q_{\inn}(\widehat{\gamma}, \widehat{\gamma}')$ between $\widehat{\gamma}$ and $\widehat{\gamma}'$ is strictly smaller than $q_v$ by \cite[Proposition 15.3]{NeumannPedersenPichon2020a}.
On the other hand, by taking a different generic projection $\ell_{\cal{D}'}$ which is also generic with respect to $\widehat{\gamma}\cup\widehat{\gamma}'$, and observing (in a similar way as in the proof of Lemma~\ref{lem:projections_LNE}) that $\ell_{\cal D'}$ induces a bilipschitz homeomorphism for the outer metric from $\widehat{\gamma}\cup \widehat{\gamma}'$ onto its image by \cite[pp. 352-354]{Teissier1982}, we deduce that the outer contact $q_\mathrm{out}(\widehat{\gamma},\widehat{\gamma}')$  between $\widehat{\gamma}$ and $\widehat{\gamma}'$ equals $q_\mathrm{out}\big(\ell_{\cal{D}'}(\widehat{\gamma}),\ell_{\cal{D}'}(\widehat{\gamma}')\big)=q_\mathrm{inn}\big(\ell_{\cal{D}'}(\widehat{\gamma}),\ell_{\cal{D}'}(\widehat{\gamma}')\big)$.
By Lemma~\ref{lem:generic projection} we have $\widetilde\ell_{\cal D'}(v)=\widetilde\ell_{\cal D'}(v')$, and thus the curves $\ell_{\cal{D}'}(\widehat{\gamma})$ and $\ell_{\cal{D}'}(\widehat{\gamma}')$ lift to the same divisor $E_{\widetilde\ell_{\cal D'}(v)}$. 
Therefore their inner contact is bigger than or equal to $q_{\widetilde\ell_{\cal D'}(v)}=q_v$, which contradicts the fact that $(X,0)$ is LNE.

To prove the converse implication, observe that if $w$ and $w'$ are two points of $\NL(\C^2,0)$ then there exists a unique injective path $\tau_{w,w'}$ between $w$ and $w'$ in $\NL(\C^2,0)$, since the latter is a connected infinite tree.
Moreover, for each point $w''$ in the interior of $\tau_{w,w'}$ we have $q_{w''}<\max \{q_w,q_{w'}\}$ (for example, this can be derived from Proposition~\ref{prop:inner_rate_LNE}).
Now assume that $v$ and $v'$ are two points of $\Gamma_\pi$ that satisfy the conditions \ref{lem:projections_LNE_paths_condition1} and \ref{lem:projections_LNE_paths_condition2} and let $\tau$ be any path in $\Gamma_\pi$ between $v$ and $v'$.
By continuity of the projection, $\widetilde\ell(\tau)$ must contain $\tau_{\widetilde{\ell}(v), \widetilde{\ell}(v')}$, and the latter has nonempty interior as soon as $\widetilde{\ell}(v) \neq \widetilde{\ell}(v')$, therefore we deduce that when this is the case then $\tau$ contains a point $v''$ mapping to the interior of $\tau_{\widetilde{\ell}(v), \widetilde{\ell}(v')}$, so that $ {q_{v''}}=q_{\widetilde\ell(v'')}<q_{\widetilde\ell(v)}=q_v$.
As this would contradict condition \ref{lem:projections_LNE_paths_condition2}, we must have $\widetilde{\ell}(v) = \widetilde{\ell}(v')$. 
\end{proof}

To make good use of this result we need to introduce some additional notation.
We denote by $V_N(\Gamma_\pi)$ the set of \emph{nodes} of $\Gamma_\pi$, that is the subset of $V(\Gamma_\pi)$ consisting of the $\cal P$-nodes, the $\cal L$-nodes, and of all the vertices that have valency at least three in $\Gamma_\pi$ (that is, those with at least three adjacent edges).
%
%
Similarly, we call {\it node} of $\Gamma_{\sigma_{\ell}}$ a vertex which is either the root vertex, a $\Delta$-node, or a vertex of valency at least three in $\Gamma_{\sigma_\ell}$, and we denote by $V_N(\Gamma_{\sigma_{\ell}})$ the set of nodes of $\Gamma_{\sigma_{\ell}}$. 

Let $\Gamma$ be either of the two graphs $\Gamma_\pi$ or $\Gamma_{\sigma_\ell}$.
We call \emph{principal part} of $\Gamma$ the subgraph $\Gamma'$ of $\Gamma$ generated by the set $V_N(\Gamma)$ of nodes of $\Gamma$, that is the subgraph defined as the union of all injective paths connecting pairs of points of $V_N(\Gamma)$.
The closure of each component of $\Gamma \setminus \Gamma'$ is a \emph{bamboo} (that is, a chain of valency 2 vertices ending with a valency 1 vertex) stemming from a node of $\Gamma$.

Lemma~\ref{lem:projections_LNE_paths} prompts us to consider an equivalence relation $\sim$ on the graph $\Gamma_{\pi}'$ defined by declaring that two vertices $v$ and $v'$ of $\Gamma_{\pi}'$ are equivalent if the two conditions \ref{lem:projections_LNE_paths_condition1} and \ref{lem:projections_LNE_paths_condition2} of the lemma hold, and two edges $e=[v_1,v_2]$ and $e'=[v_1',v_2']$ are equivalent if and only if $v_1\sim v_1' $ and $v_2\sim v_2'$.
The following proposition relates the nodes of $\Gamma_\pi$ to the ones of $\Gamma_{\sigma_\ell}$ and explains how the equivalence relation $\sim$ allows to retrieve the principal part $\Gamma'_{\sigma_\ell}$ of $\Gamma_{\sigma_\ell}$ from the one of $\Gamma_{\pi}$. 

\begin{proposition}  \label{prop:nodes}  
	Let $(X,0)$, $\pi$, $\ell$, and $\sigma_\ell$ be as above.
	Then we have:
	\begin{enumerate}
		\item	\label{prop:nodes_vertices}
			\(
			\widetilde{\ell}\big(V_N(\Gamma_{\pi})\big)  = V_N(\Gamma_{\sigma_{\ell}})\,;
			\)
		\item	\label{prop:nodes_quotient}
			the map $\widetilde\ell|_{\Gamma'_{\pi}}  \colon \Gamma'_{\pi} \to  \Gamma'_{\sigma_{\ell}}$ identifies the graph $\Gamma'_{\sigma_{\ell}}$ with the quotient graph $\Gamma'_{\pi}/\hspace{-.25em}\sim$. 
	\end{enumerate}
\end{proposition}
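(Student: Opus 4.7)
The plan is to prove part~\ref{prop:nodes_vertices} first, and then deduce part~\ref{prop:nodes_quotient} from it together with Lemma~\ref{lem:projections_LNE_paths}.

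For the inclusion $\widetilde\ell\big(V_N(\Gamma_\pi)\big)\subseteq V_N(\Gamma_{\sigma_\ell})$ in part~\ref{prop:nodes_vertices}, I treat the three types of nodes of $\Gamma_\pi$ separately. A $\cal P$-node is sent to a $\Delta$-node by definition. An $\cal L$-node $v$ has $q_v=1$, hence $q_{\widetilde\ell(v)}=1$ by inner-rate preservation \cite[Lemma~3.2]{BelottodaSilvaFantiniPichon2019}, and then Proposition~\ref{prop:inner_rate_LNE} applied to $(\C^2,0)$ forces $\widetilde\ell(v)$ to be the root $w_0$. For a valency-$\geq 3$ vertex $v$ which is neither a $\cal P$- nor an $\cal L$-node, I apply Proposition~\ref{prop:morphism_graph_local} to a subgraph $\Gamma_0$ of $\Gamma_\pi$ containing $v$ together with all its adjacent edges and contained in the closure of the connected component of $\Gamma_\pi\setminus\{\cal P\text{-nodes}\}$ at $v$: that proposition identifies $\Gamma_0$ with a subgraph of $\Gamma_{\sigma_{\Gamma_0}}$, and since $\sigma_\ell$ factors through $\sigma_{\Gamma_0}$ by minimality of the latter, the vertex $\widetilde\ell(v)$ has valency at least $3$ in $\Gamma_{\sigma_\ell}$.

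For the reverse inclusion, the root and the $\Delta$-nodes are evidently in the image. Let $w$ be a valency-$\geq 3$ vertex of $\Gamma_{\sigma_\ell}$ that is neither a $\Delta$-node nor the root. By Proposition~\ref{prop:discriminant_resolution}, $\sigma_\ell$ coincides with the minimal embedded resolution of the discriminant curve, so minimality forces each of the three or more subtrees of $\Gamma_{\sigma_\ell}\setminus\{w\}$ to contain elements of $\widetilde\ell\big(V(\Gamma_\pi)\big)$, as otherwise the corresponding sequence of blowups could be removed. I then argue that $w$ must be the image of a valency-$\geq 3$ vertex of $\Gamma_\pi$: if every preimage of $w$ in $\Gamma_\pi$ were either a valency-$\leq 2$ vertex or an interior point of an edge, then, applying Proposition~\ref{prop:morphism_graph_local} to the components of $\Gamma_\pi\setminus\{\cal P\text{-nodes}\}$ meeting these preimages and using Lemma~\ref{lem:projections_LNE_paths} to organize the equivalent preimages, the three directions at $w$ in $\Gamma_{\sigma_\ell}$ could not all be attained, contradicting the fact that $\widetilde\ell$ covers $\Gamma_{\sigma_\ell}$.

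For part~\ref{prop:nodes_quotient}, the map $\widetilde\ell|_{\Gamma'_\pi}$ sends equivalent vertices of $\Gamma'_\pi$ to the same vertex of $\Gamma'_{\sigma_\ell}$ by Lemma~\ref{lem:projections_LNE_paths}, and equivalent edges to the same edge since $\Gamma_{\sigma_\ell}$ is a tree and its edges are determined by their pairs of endpoints. The induced map on $\Gamma'_\pi/{\sim}$ is therefore well defined; it is injective on vertices by the converse implication of Lemma~\ref{lem:projections_LNE_paths} and injective on edges by the same tree argument. Surjectivity on vertices is part~\ref{prop:nodes_vertices}; for surjectivity on edges, given an edge $[w,w']$ of $\Gamma'_{\sigma_\ell}$ I lift its endpoints to nodes $v,v'$ of $\Gamma_\pi$ using part~\ref{prop:nodes_vertices} and apply Proposition~\ref{prop:morphism_graph_local} to the connected component of $\Gamma_\pi\setminus\{\cal P\text{-nodes}\}$ adjacent to $v$ whose image in $\Gamma_{\sigma_\ell}$ contains $[w,w']$, thereby producing the required edge of $\Gamma'_\pi$. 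The main obstacle will be the reverse inclusion in part~\ref{prop:nodes_vertices}, specifically the extraction of a valency-$\geq 3$ vertex of $\Gamma_\pi$ above a valency-$\geq 3$ vertex $w$ of $\Gamma_{\sigma_\ell}$ that is neither a $\Delta$-node nor the root, which requires combining the minimality of $\sigma_\ell$ with a careful analysis of the preimages of $w$.
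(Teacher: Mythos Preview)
Your approach is essentially the same as the paper's: both arguments treat the three types of nodes separately for the inclusion $\subseteq$, use Proposition~\ref{prop:discriminant_resolution} and Proposition~\ref{prop:morphism_graph_local} for the reverse inclusion, and derive part~\ref{prop:nodes_quotient} from part~\ref{prop:nodes_vertices} via Lemma~\ref{lem:projections_LNE_paths}.

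There is, however, a genuine gap in your ``only if'' step. Your claim that ``$\widetilde\ell$ covers $\Gamma_{\sigma_\ell}$'' is false: as the paper observes (and as Example~\ref{ex:pi_not_minimal_multiplicities} already illustrates), $\Gamma_{\sigma_\ell}\setminus\widetilde\ell(\Gamma_\pi)$ can be nonempty---it consists of bamboos stemming from $\Delta$-nodes. What one actually needs is the weaker fact that, for a vertex $w$ which is \emph{not} a $\Delta$-node, all outgoing edges at $w$ lie in $\widetilde\ell(\Gamma_\pi)$; the paper proves this by applying Proposition~\ref{prop:morphism_graph_local} to a component of $\Gamma_\pi\setminus\{\cal P\text{-nodes}\}$ meeting $\widetilde\ell^{-1}(w)$ and noting that the uncontractible bamboo outside $\Gamma'_{\sigma_\ell}$ already appears in $\Gamma_{\sigma_{\Gamma_0}}$. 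Your minimality argument (each branch of $\Gamma_{\sigma_\ell}\setminus\{w\}$ contains an element of $\widetilde\ell(V(\Gamma_\pi))$) is correct and, combined with connectedness of $\Gamma_\pi$, gives the same conclusion, but you should state it rather than the false global coverage claim.

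More seriously, your extraction of a valency-$\geq 3$ preimage is only sketched. The paper's argument here is sharp and you should make it explicit: given two distinct outgoing edges $e,e'$ at $w$ lifted to edges $\widetilde e,\widetilde e'$ at (possibly distinct) vertices $v,v'\in\widetilde\ell^{-1}(w)$, Lemma~\ref{lem:projections_LNE_paths} provides an injective path $\tau$ from $v$ to $v'$ through points of inner rate $\geq q_v$, hence through outgoing edges only. If $\tau$ left $v$ along $\widetilde e$ and arrived at $v'$ along $\widetilde e'$, its image $\widetilde\ell(\tau)$ would be a nontrivial closed path in the tree $\Gamma_{\sigma_\ell}$, which is impossible; hence one of $v,v'$ has a second outgoing edge and is therefore trivalent (it also carries an incoming edge since it is not an $\cal L$-node). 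Your phrase ``the three directions at $w$ could not all be attained'' does not capture this, and without the loop-in-a-tree contradiction the step does not go through.
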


\begin{proof}
Observe that $\Gamma_{\sigma_\ell}$ and $\Gamma_{\sigma_\Delta}$ coincide thanks to Proposition~\ref{prop:discriminant_resolution}, and the latter, being the minimal embedded resolution graph of the plane curve $\Delta$, has a very simple shape (see for example \cite{BrieskornKnoerrer1986}).
In particular, we deduce that the principal part $\Gamma_{\sigma_\ell}'$ of $\Gamma_{\sigma_\ell}$ coincides with the union of the injective paths connecting the root vertex of $\Gamma_{\sigma_\ell}$ to one of its $\Delta$-nodes. 
It follows that the image $\widetilde\ell(\Gamma_\pi)$ of $\Gamma_\pi$ via $\widetilde\ell$ contains $\Gamma_{\sigma_\ell}'$.
Indeed, any injective path from an $\cal L$-node to a $\cal P$-node $v$ in $\Gamma_\pi$ is sent by $\widetilde\ell$ to the unique injective path in $\Gamma_{\sigma_\ell}$ from the root vertex to the $\Delta$-node $\widetilde\ell(v)$.
Moreover, any vertex of $\Gamma_{\sigma_\ell}$ contained in $\widetilde\ell(\Gamma_\pi)$ is the image of a vertex of $\Gamma_\pi$, as follows readily from Proposition~\ref{prop:morphism_graph_local}.
From the particular shape of $\Gamma_{\sigma_\ell}$ we also deduce that, if $w$ is a vertex of $\Gamma_{\sigma_\ell}$, then at most one of the edges of $\Gamma_{\sigma_\ell}$ that are outgoing (that is, not incoming in the sense of Section~\ref{sec:end_proof_main}) at $w$ may fail to be contained in the principal part $\Gamma_{\sigma_\ell}'$.

By definition, a vertex $w$ of $\Gamma_{\sigma_\ell}$ is the root vertex (respectively a $\Delta$-node) of $\Gamma_{\sigma_\ell}$ if and only if $\widetilde{\ell}^{-1}(w)$ contains an $\cal L$-node (respectively a $\cal P$-node) of $(X,0)$.
To prove part \ref{prop:nodes_vertices} of the proposition it is then sufficient to establish the following claim: a vertex $w$ of $\Gamma_{\sigma_\ell}$ that is not the root vertex nor a $\Delta$-node has valency at least three if and only if $\widetilde\ell^{-1}(w)$ contains at least a vertex of valency at least three in $\Gamma_\pi$.

The ``if'' part of the claim can be easily obtained by taking a vertex $v$ in $\widetilde\ell^{-1}(w)$ having valency at least three and applying Proposition~\ref{prop:morphism_graph_local} to the subgraph of $\Gamma_\pi$ consisting of the topological closure of $v$ and its adjacent edges.

Let us prove the ``only if'' part of the claim.  
We begin by showing that all edges of $\Gamma_{\sigma_\ell}$ that are outgoing at $w$ belong to $\widetilde\ell(\Gamma_\pi)$.
Assume by contradiction that this is not the case, and let $e$ be such an edge that is not contained in $\widetilde\ell(\Gamma_\pi)$.
In particular $e$ is not contained in $\Gamma_{\sigma_\ell}'$ either, and recalling that $\sigma_\ell$ coincides with the minimal resolution of the plane curve $\Delta$, we deduce that the connected component of $\sigma_\ell\setminus\{w\}$ containing $e$ is a bamboo which cannot be contracted.
Therefore, in the notation of Proposition~\ref{prop:morphism_graph_local}, if $\Gamma_0$ is the graph induced on the closure of any connected component of $\Gamma_\pi\setminus\{\cal P\text{-nodes}\}$ which intersects $\widetilde\ell^{-1}(w)$ nontrivially, then $e$ is contained in the dual graph $\Gamma_{\sigma_{\Gamma_0}}$.
Since $w$ is not a $\Delta$-node of $\Gamma_{\sigma_\ell}$, we deduce from Proposition~\ref{prop:morphism_graph_local} that $e$ is contained in $\widetilde\ell(\Gamma_\pi)$.
To conclude the proof of the claim we will make use of Lemma~\ref{lem:projections_LNE_paths}.
Let $e$ and $e'$ be two distinct outgoing edges of $\Gamma_{\sigma_\ell}$ at $w$.
If there exists a vertex $v$ of $\Gamma_\pi$ such that $e$ and $e'$ are both images of edges of $\Gamma_\pi$ adjacent to $v$, then $v$ has valency at least three in $\Gamma_\pi$, since not being an $\cal L$-node it must also have an incoming edge, and therefore there is nothing else to prove.
We can thus assume without loss of generality that there exist two distinct vertices $v$ and $v'$ of $\Gamma_\pi$ such that $\widetilde\ell(v)=\widetilde\ell(v')=w$ and two edges, $\widetilde e$ adjacent to $v$ and $\widetilde e'$ adjacent to $v'$, whose images contain $e$ and $e'$ respectively.
Since $\widetilde\ell(v)=\widetilde\ell(v')$, it follows from Lemma~\ref{lem:projections_LNE_paths} that there exists an injective path $\tau$ from $v$ to $v'$ passing through outgoing edges only.
However, $\tau$ cannot leave $v$ from $\widetilde e$ and reach $v'$ through $\widetilde e'$, as if that were the case then $\tau$ would become a loop in $\Gamma_{\sigma_\ell}$, which is a tree.
This imply that at least one among the vertices $v$ and $v'$ must have a second outgoing edge, and must therefore be trivalent.
This concludes the proof of the claim, and therefore of part~\ref{prop:nodes_vertices} of the proposition.

To conclude the proof, observe that \ref{prop:nodes_vertices} implies that $\widetilde{\ell}$ restricts to a surjective map $\widetilde\ell|_{\Gamma'_{\pi}} \colon \Gamma'_{\pi} \to \Gamma'_{\sigma_{\ell}}$ between the principal parts of $\Gamma_\pi$ and $\Gamma_{\sigma_\ell}$. 
Lemma~\ref{lem:projections_LNE_paths} then shows that setwise the map $\widetilde\ell$ identifies $\Gamma'_{\sigma_{\ell}}$ with the quotient $\Gamma'_{\pi}/\hspace{-.25em}\sim$.
The fact that it gives an isomorphism of graphs follows then from the fact that, as we already observed, any vertex of $\Gamma_{\sigma_\ell}$ contained in $\widetilde\ell(\Gamma_\pi)$ is the image of a vertex of $\Gamma_\pi$, and so $\widetilde\ell\big(V(\Gamma_\pi')\big) = V(\Gamma_{\sigma_\ell}')$.
\end{proof}

\begin{remarks} 
\begin{enumerate}
	\item Observe that Lemma~\ref{lem:projections_LNE_paths} fails outside of the dual graph of the minimal resolution $\pi$.
	Indeed, it is sufficient to compose $\pi$ with two point blowups, choosing as center two distinct points of the same component of the exceptional divisor of $\pi$ which are identified by a suitable lifting of $\ell$, to obtain two divisorial points $v$ and $v'$ such that $\widetilde\ell(v)=\widetilde\ell(v')$ and for which condition~\ref{lem:projections_LNE_paths_condition2} does not hold.
	
	\item 
	If $w$ is the root vertex of $\Gamma_{\sigma_{\ell}}$, then $\widetilde{\ell}^{-1}(w)$ is exactly the set of $\cal L$-nodes of $(X,0)$, so that $\widetilde{\ell}^{-1}(w) \subset V_N(\Gamma_\pi)$. 
	However, if $w$ is a $\Delta$-node of $\Gamma_{\sigma_{\ell}}$, not all vertices in $\widetilde{\ell}^{-1}(w)$ need to be $\cal P$-nodes of $(X,0)$ (nor, more generally, nodes of $\Gamma_\pi$), as \cite[Example~3.13]{NeumannPedersenPichon2020b} shows.
	If $w$ is a node of $\Gamma_{\sigma_{\ell}}$ which is not a $\Delta$-node and which has valency at least three in $\Gamma_{\sigma_{\ell}}$, we do not know whether $\widetilde{\ell}^{-1}(w)$ may contain vertices having valency less than three in $\Gamma_{\pi}$.
	
	\item In the course of the proof of Proposition~\ref{prop:nodes}, we have shown that $\Gamma_{\sigma_\ell}\setminus \widetilde\ell(\Gamma_\pi)$ consists of bamboos stemming from $\Delta$-nodes.
	Moreover, over a bamboo of $\Gamma_{\sigma_\ell}$ stemming from a vertex $w$ which is not a $\Delta$-node, there is a copy of same bamboo stemming from any vertex $v$ of $\Gamma_\pi$ such that $\widetilde\ell(v)=w$.
	It then follows from Lemma~\ref{lem:projections_LNE_paths} that there can only be one vertex $v$ of $\Gamma_\pi$ which is sent to such a vertex $w$ by $\widetilde\ell$.
\end{enumerate}	
\end{remarks}

We have now collected all the results we need to move to the study of the embedded topological type of the discriminant curve $(\Delta,0)\subset(\C^2,0)$.
Fix once and for all a set of coordinates $(x_1,x_2)$ on $(\C^2,0)$ such that $x_1=0$ is transverse to $\Delta$.
The topological type we are interested in is then completely determined by the characteristic exponents of the Newton--Puiseux expansion with respect to $x_1$ of each branch of $\Delta$ and by the coincident exponents between each pair of branches.
This data is encoded by another combinatorial object, the so-called Eggers--Wall tree $\Theta(\Delta)=\Theta_{x_1}(\Delta)$ of $\Delta$.
We refer the reader to \cite[\S3]{BarrosoPerezPopescu2019} for a thorough introduction to this object, and in particular to Definition~3.8 and Remark~3.14 of \emph{loc.\ cit.} for a formal definition starting from Newton--Puiseux expansions and an interesting historical remark.
From our point of view, it is more convenient to describe the Eggers--Wall tree $\Theta(C)$ of a plane curve germ $(C,0)\subset(\C^2,0)$ starting from the dual graph of a good embedded resolution of $\Delta$ and from the invariants we already consider, namely multiplicities and inner rates.
This follows the philosophy of Section~8 of \emph{loc.\ cit.}, where an embedding of $\Theta(C)$ in a valuation space homeomorphic to $\NL(\C^2,0)$ is described (see in particular Theorem~8.19 there).
The procedure is the following.

\begin{algo}\label{algorithm:EW_tree} 
Denote by $\sigma_C\colon Y_{\sigma_C}\to C^2$ the minimal good embedded resolution of the curve $C$.
The set of nodes $V_N(\Gamma_{\sigma_C})$ of the dual graph $\Gamma_{\sigma_C}$ of $\sigma_C$ is by definition the set consisting of its root, its $C$-nodes, which are the vertices corresponding to the components of ${\sigma_C}^{-1}(0)$ intersecting the strict transform of $C$, and its vertices of valency at least three.
The Eggers--Wall tree $\Theta(C)$ is obtained from the set of nodes $V_N(\Gamma_{\sigma_C})$ of the tree $\Gamma_{\sigma_C}$, from its principal part $\Gamma_{\sigma_C}'$, and from the multiplicities and the inner rates of the vertices of $\Gamma_{\sigma_C}'$, as follows:
\begin{itemize}
	\item From $\Gamma_{\sigma_C}'$, attach one extra edge to the root and one to each $C$-node $w$ for every branch of $C$ passing through $E_w$.
	\item Decorate each node $w\in V_N(\Gamma_{\sigma_C})$ (this includes vertices that have valency larger than three in $\Gamma_{\sigma_C}$ but less than three in $\Gamma_{\sigma_C}'$) with the rational number $e_C(w)=q_w$.
	\item If $e=[w,w']$ is an edge of $\Gamma_{\sigma_C}'$, decorate it with the integer $i(e) = \lcm(m_w,m_{w'})$.
	\item If $e$ is one of the new edges of $\Theta(C)$ adjacent to a vertex $w$, decorate it with the integer $i(e)=m_w$.
\end{itemize}
\end{algo}

The rational numbers $e_C(w)$ on the nodes $w$ on the path connecting the root to a $C$-node $w'$ are then precisely the characteristic exponents of any branch of $C$ passing through $E_w$, while the coincident exponent between two branches can be computed from the functions $e_C$ and $i_C$, as explained in \cite[Theorem~3.25]{BarrosoPerezPopescu2019}.

In order to describe the embedded topological type of the discriminant curve $\Delta$, it remains to show how to combine results we proved in Sections~\ref{sec:key_lemma_multiplicities}, \ref{sec:end_proof_main}, and \ref{sec:discriminant} to determine the input of Algorithm~\ref{algorithm:EW_tree} from the minimal good resolution of $(X,0)$.
This can be done as follows.

\begin{algo}\label{algorithm:descent}
	Denote by $\pi_0\colon X_{\pi_0} \to X$ the minimal good resolution of $(X,0)$.
	Then:
	\begin{itemize}	
	\item The multiplicities and the inner rates of the vertices of $\Gamma_{\pi_0}$ are uniquely determined by parts~\ref{thm:main_L-vector} and \ref{thm:main_inner_rates} of Theorem~\ref{thm:main}.
	
	\item The minimal resolution $\pi\colon X_{\pi} \to X$ of $(X,0)$ factoring through its Nash transform, decorated with its multiplicities and inner rates, is obtained from $\pi_0$ applying the algorithm of part~\ref{thm:main_Nash_transform} of Theorem~\ref{thm:main}.
	This also determines the set of nodes $V_N(\Gamma_\pi)$ of $\Gamma_\pi$ and its principal part $\Gamma_\pi'$.
	
	\item Recall that we have $\sigma_\ell=\sigma_\Delta$ by Proposition~\ref{prop:discriminant_resolution}.
	Therefore, by Propositions~\ref{prop:nodes} we obtain the principal part $\Gamma'_{\sigma_\Delta}$ of $\Gamma_{\sigma_\Delta}$ and the subset $V_N(\Gamma_{\sigma_C})$ consisting of the nodes of $\Gamma_{\sigma_\Delta}$.
	
	\item The multiplicities of the vertices of $\Gamma_{\sigma_\Delta}'$ are determined by the ones of the vertices of $\Gamma_{\pi}$ thanks to Lemma~\ref{lem:projections_LNE}.\ref{lem:projections_LNE_multiplicities}.
	
	\item The inner rates of the vertices of $\Gamma_{\sigma_\Delta}'$ are determined by the ones of the vertices of $\Gamma_{\pi}$ because inner rates commute with $\widetilde\ell$ thanks to \cite[Lemma~3.2]{BelottodaSilvaFantiniPichon2019}.
	\end{itemize}
\end{algo}

We have proven the following result, which is a more precise version of Theorem~\ref{thm:main2} from the introduction.

\begin{theorem} \label{thm:discriminant_precise}
Let $(X,0)$ be a LNE normal surface germ and let $\ell\colon (X,0)\to(\C^2,0)$ be a generic projection.
Then the embedded topology of the discriminant curve $\Delta$ of $\ell$ is completely determined by the topology of $(X,0)$.
More precisely, the Eggers--Wall tree of $\Delta$ can obtained by applying Algorithm~\ref{algorithm:descent} followed by Algorithm~\ref{algorithm:EW_tree} to the dual graph of the minimal resolution of $(X,0)$.
\end{theorem}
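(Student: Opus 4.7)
The plan is to follow the two algorithms verbatim and justify each step using results already established in the paper. The goal is to show that, starting solely from the topological type of $(X,0)$ (equivalently from the dual graph of its minimal good resolution), we can reconstruct all the combinatorial data that Algorithm~\ref{algorithm:EW_tree} takes as input when applied to the plane curve $\Delta$, namely the set $V_N(\Gamma_{\sigma_{\Delta}})$ of nodes of the minimal embedded resolution graph of $\Delta$, its principal part $\Gamma_{\sigma_{\Delta}}'$, and the multiplicities and inner rates of the vertices of $\Gamma_{\sigma_{\Delta}}'$.

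First, I would start from $\Gamma_{\pi_0}$, the dual graph of the minimal good resolution of $(X,0)$, and use parts~\ref{thm:main_L-vector} and \ref{thm:main_inner_rates} of Theorem~\ref{thm:main} to read off the multiplicity $m_v$ and the inner rate $q_v$ of every vertex $v$; the first is the coefficient of $E_v$ in the fundamental cycle $Z_{\min}$ of $\Gamma_{\pi_0}$, the second is $d(v,V_{\cal L})+1$. Next, using part~\ref{thm:main_Nash_transform} of Theorem~\ref{thm:main}, I would construct $\Gamma_\pi$ (the minimal good resolution factoring through the Nash transform) from $\Gamma_{\pi_0}$ by blowing up double points according to the criterion $|q_v-q_{v'}|<d(v,v')$; at each blowup the new multiplicity and inner rate are computed from the already known ones by the explicit formulas given there. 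At this stage I know $V_N(\Gamma_\pi)$ (that is, the $\cal L$-nodes, the $\cal P$-nodes, and vertices of valency $\geq 3$) and the principal part $\Gamma_\pi'$, together with all the numerical decorations.

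The second half of the argument moves the data over to $(\C^2,0)$. By Proposition~\ref{prop:discriminant_resolution}, the morphism $\sigma_\ell$ coincides with the minimal embedded resolution $\sigma_\Delta$ of the discriminant curve, so $\Gamma_{\sigma_\Delta}=\Gamma_{\sigma_\ell}$ and the $\Delta$-nodes are precisely the images of the $\cal P$-nodes. By Proposition~\ref{prop:nodes}, the restriction of $\widetilde\ell$ identifies $\Gamma_{\sigma_\Delta}'$ with the quotient $\Gamma_\pi'/\!\!\sim$ where $\sim$ is the equivalence relation described by Lemma~\ref{lem:projections_LNE_paths}, that is, the relation generated by pairs $(v,v')$ of vertices with $q_v=q_{v'}$ which are joined in $\Gamma_\pi'$ by a path along which the inner rate is everywhere at least $q_v$. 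Crucially, this relation depends only on the topological data already produced in the first half, so the quotient graph $\Gamma_{\sigma_\Delta}'$ is determined, as is the distinguished subset $V_N(\Gamma_{\sigma_\Delta})$. Multiplicities and inner rates transfer from $\Gamma_\pi'$ to $\Gamma_{\sigma_\Delta}'$: the former by part~\ref{lem:projections_LNE_multiplicities} of Lemma~\ref{lem:projections_LNE} (which gives $m_v=m_{\widetilde\ell(v)}$), and the latter because the inner rate is preserved by $\widetilde\ell$ by \cite[Lemma~3.2]{BelottodaSilvaFantiniPichon2019}. Feeding this input into Algorithm~\ref{algorithm:EW_tree} then yields the Eggers--Wall tree $\Theta(\Delta)$, which encodes the embedded topological type of $\Delta$.

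The main obstacle, already isolated in Sections~\ref{sec:key_lemma_multiplicities} and \ref{sec:discriminant}, is that the map $\widetilde\ell\colon \Gamma_\pi\to\Gamma_{\sigma_\Delta}$ is neither globally a morphism of graphs nor generally surjective, so a priori it is not clear how to recover $\Gamma_{\sigma_\Delta}'$ combinatorially; the identification of its nodes via Proposition~\ref{prop:nodes} and of its edges via the equivalence relation of Lemma~\ref{lem:projections_LNE_paths} is precisely what overcomes this. Everything else is bookkeeping: one only needs to verify that the algorithms are self-contained given the invariants above, which is what Sections~\ref{sec:Zmin=Zmax}--\ref{sec:discriminant} provide.
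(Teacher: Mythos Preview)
Your proposal is correct and follows essentially the same approach as the paper: the paper's proof of Theorem~\ref{thm:discriminant_precise} is precisely the content of Section~\ref{sec:discriminant} as summarized in Algorithm~\ref{algorithm:descent}, and you have accurately expanded that algorithm step by step with the same justifications (Theorem~\ref{thm:main}\ref{thm:main_L-vector},\ref{thm:main_inner_rates},\ref{thm:main_Nash_transform}, Proposition~\ref{prop:discriminant_resolution}, Proposition~\ref{prop:nodes}, Lemma~\ref{lem:projections_LNE}\ref{lem:projections_LNE_multiplicities}, and \cite[Lemma~3.2]{BelottodaSilvaFantiniPichon2019}). Your observation that the equivalence relation $\sim$ of Lemma~\ref{lem:projections_LNE_paths} depends only on the already-computed inner rates is exactly the point that makes the descent combinatorial.
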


\appendix

{ \section{Generic polar curves and Nash transform} \label{appendix:A}

In this appendix we give a comprehensive proof of a result stated in the introduction.

\begin{proposition}\label{proposition:appendix_smooth_transverse}  
	Let $(X,0)$ be a normal surface singularity, let  $\pi\colon X_\pi \to X$  be a good resolution of $(X,0)$,  and let  $h\colon (X,0) \to (\C,0)$ and $\ell \colon (X,0) \to (\C^2,0)$ be respectively a generic linear form and a generic plane projection of $(X,0)$.
	Then:
	\begin{enumerate}
		\item If $\pi$ factors  through the blowup of the maximal ideal of $(X,0)$, then the strict transform via $\pi$ of the hyperplane section $h^{-1}(0)$ associated with $h$ consists of smooth curves intersecting the exceptional divisor $\pi^{-1}(0)$ transversely at smooth points;
		\item If $\pi$ factors  through the Nash transform of $(X,0)$, then the strict transform via $\pi$ of the polar curve of $\ell$ consists of smooth curves intersecting $\pi^{-1}(0)$ transversely at smooth points. 
	\end{enumerate}
\end{proposition}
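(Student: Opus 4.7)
The plan is to reduce each statement to a Bertini-type argument. The key idea is that factoring the resolution $\pi$ through a particular modification (the blowup of the maximal ideal for (i), the Nash transform for (ii)) is equivalent to a certain linear system on $X_\pi$ being base-point-free. Once this is established, a version of Bertini's theorem valid in the complex-analytic setting (or, equivalently, an application of generic smoothness to the total space of the family) will simultaneously yield the smoothness of the generic strict transform and its transverse intersection with the exceptional divisor $E=\pi^{-1}(0)$ at smooth points of $E$.

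For part (i), I would argue as follows. Saying that $\pi$ factors through $\mathrm{Bl}_0 X$ is equivalent to saying that the ideal sheaf $\pi^{-1}(\mathfrak m)\cdot\mathcal O_{X_\pi}$ is invertible, and in fact equal to $\mathcal O_{X_\pi}(-Z_{\max}(X,0))$. A linear form $h\in(\mathbb C^n)^*$ restricted to $X$ then pulls back to a global section $s_h$ of $\mathcal O_{X_\pi}(-Z_{\max}(X,0))$, whose zero locus is precisely the strict transform of $h^{-1}(0)$. As $h$ varies over the space $(\mathbb C^n)^*$ of linear forms, the sections $s_h$ generate $\mathcal O_{X_\pi}(-Z_{\max}(X,0))$ at every point of $X_\pi$, i.e.\ the linear system they span is base-point-free; this is essentially the content of the factorization through $\mathrm{Bl}_0 X$. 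The conclusion then follows by applying Bertini to this linear system on the smooth variety $X_\pi$, combined with Bertini applied to its restriction to each component $E_v$ (a smooth curve), which ensures transversality at smooth points of $E_v$ away from the other components.

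For part (ii), the argument is structurally the same but the description of the relevant linear system is more delicate. The Nash transform $\nu\colon\mathcal N(X)\to X$ is the closure of the graph of the Gauss map sending a smooth point $x\in X$ to its tangent plane $T_xX\in\mathrm{Gr}(2,\mathbb C^n)$, and it carries a tautological rank-two bundle $\mathcal T$. For $\mathcal D\in\Omega\subset\mathrm{Gr}(n-2,\mathbb C^n)$, the polar curve $\Pi_{\mathcal D}$ lifts to the locus on $\mathcal N(X)$ where the tangent plane $\mathcal T$ meets $\mathcal D$ non-trivially; this Schubert-type condition is cut out by the vanishing of a section of a suitable line bundle on $\mathcal N(X)$ built out of $\mathcal T$ and $\mathcal D$. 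As $\mathcal D$ varies over $\Omega$, these sections generate the line bundle, hence form a base-point-free linear system on $\mathcal N(X)$; pulling back via the map $X_\pi\to\mathcal N(X)$ gives a base-point-free linear system on the smooth surface $X_\pi$ whose generic member is the strict transform of $\Pi_{\mathcal D}$. One then concludes by Bertini exactly as in part (i).

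The main obstacle is in establishing the base-point-freeness in part (ii): one has to identify the correct line bundle and section on $\mathcal N(X)$ and check that, as $\mathcal D$ ranges over the generic open subset $\Omega$, the sections have no common zero. The direct computation is somewhat involved because $\mathcal N(X)$ is typically singular. This is presumably where Fernández de Bobadilla's argument, credited in the Acknowledgments, enters to provide a more conceptual shortcut, perhaps by working directly with the universal family of polar curves over $\Omega$ and applying generic smoothness to the projection from its total space in $X_\pi\times\Omega$ down to $X_\pi$ (to get base-point-freeness) and down to $\Omega$ (to get smoothness and transversality of the generic fiber).
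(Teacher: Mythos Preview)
Your overall strategy is correct and matches the paper's: both parts reduce to a Bertini-type argument on a linear system made base-point-free by the factorization hypothesis. For part~(i) your argument is essentially identical to the paper's; the paper just phrases it concretely via the projection $p_2\colon E_I\to\Pro^{k-1}$ coming from the presentation of the blowup of an $\mathfrak M$-primary ideal, and applies Bertini to the image curve $p_2(E_I)$.

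For part~(ii), the paper's execution is different from what you anticipate and neatly sidesteps the difficulty you flag. Rather than identifying an abstract line bundle on $\mathcal N(X)$ and checking base-point-freeness, the paper works with the lifted Gauss map $\overline\lambda\colon\overline{\mathcal N(X)}\to\Gr(2,\C^k)$ on the normalized Nash transform and argues geometrically. When $k=3$ one has $\Gr(2,\C^3)\cong\Pro^2$, the Schubert condition ``$T_xX$ meets $\mathcal D$'' becomes ``$\overline\lambda(x)$ lies on the line $L_{\mathcal D}\subset\Pro^2$'', and the strict transform of $\Pi_{\mathcal D}$ is exactly $\overline\lambda^{-1}(L_{\mathcal D})$; Bertini for the pencil of lines in $\Pro^2$ then gives the result immediately. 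For general $k$, the paper reduces to this case by choosing a generic $(k-3)$-plane $W\subset\C^k$ and composing $\overline\lambda$ with the projection $\Gr(2,\C^k;W)\to\Gr(2,\C^k/W)\cong\Pro^2$. This dimension-reduction trick (borrowed from \cite{BirbrairNeumannPichon2014}) is the simplification credited to Fern\'andez de Bobadilla, not the universal-family argument you guessed. Your proposed route via sections of a line bundle would also work, but the paper's is shorter and avoids any computation on the singular space $\mathcal N(X)$.
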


Observe that in the statement by the word \emph{generic} we also mean generic with respect to $\pi$, which means that the strict transforms by $\pi$ of the associated linear form and polar curve intersect $\pi^{-1}(0)$ only at $\cal L$- and $\cal P$-nodes respectively, rather than moving to other components created by additional blowups (this is a standard condition, see for example \cite[2.2]{BelottodaSilvaFantiniPichon2019}). 
We note that this result seems to be accepted by the experts working in the field (see for instance the discussion of  \cite[Section 2]{Bondil2005} regarding the first part of the proposition), but we have not been able to locate a proof of it in the literature and therefore we provide one here.

\begin{proof} 
  Let us prove a more general version of $(i)$. 
Denote by $\mathfrak M$ the maximal ideal of $(X,0)$ and let $I$ be a $\mathfrak M$-primary ideal (that is, $I$ contains a power of $\mathfrak M$). 
Choose a system of generators $(f_1,\ldots, f_k)$ of $I$, consider the blowup 
$\mathrm{Bl}_I(X) \colon X_I \to X $ of $I$, which is defined as the closure in $X \times \Pro^{k-1}$ of the set $\big\{(x,[f_1(x):\ldots:f_k(x)]) \   \big|\ x \in X\setminus V(I)\big\}$, and let $n_I \colon (\overline{X_I}, \overline{E}_I)\to (X_I,E_I)$ be its normalization. 
Denote by $p_2 \colon  E_I  \to   \Pro^{k-1}$ the projection on the second factor.
By Bertini's theorem, a generic hyperplane $H \colon a_1 z_1 + \ldots + a_k z_k=0$ of $\Pro^{k-1}$ intersects the complex curve $p_2(E_I)$ transversely at smooth points. 
Therefore, the multigerm $(p_2 \circ n_I)^{-1}(H)$ consists of disjoint smooth curves intersecting transversely $\overline{E}_I$ at a finite numbers of smooth points. 
On the other hand, $( p_2 \circ n_I)^{-1}(H)$ is exactly the strict transform of $V(h)$ by $ {
Bl_I(X)} \circ n_I$, where $h$ is the element of $I$ defined by  $h = a_1 f_1 + \ldots + a_k f_k$.
Applying this to $I=\mathfrak M$, since $\pi$ factors through the normalized blowup of the maximal ideal, we obtain part~$(i)$ of the proposition. 

Let us now prove $(ii)$.  
 {Recall that the Nash transform $\nu \colon {\cal N}(X) \to X$ of $(X,0)$ is the projection on the first factor of the closure of $\big\{ (x,T_xX) \,\big|\, x \in X
\setminus\{0\}\big\}$ in $X \times  \Gr(2,\C^k)$. Set $\overline{\nu} = \nu \circ n$  where  $n \colon \overline{\cal N(X)} \to \cal N(X)$ is  the normalization of $\cal N(X)$. 
The Gauss map $X \setminus \{0\} \to \Gr(2,\C^k)$ which sends $x$ to $T_xX$ lifts to a well defined map $ \overline{\lambda} \colon   \overline{\cal N(X)}  \to \Gr(2,\C^k)$. 
To complete the proof of the proposition it is now sufficient to show that the strict transform by $\overline{\nu}$ of a generic polar curve of $(X,0)$ intersects transversely the exceptional divisor $E=\overline{\nu}^{-1}(0)$ at smooth points.}
The proof is similar in spirit to the proof of $(i)$, but relies on a construction from \cite[page 210]{BirbrairNeumannPichon2014}. 
Let $\Omega$ be a dense open subset of $\Gr(k-2, \C^k)$ parametrizing generic plane projections of $(X,0)$, let $\cal D$ be an element of $\Omega$, and let $\ell_{\cal{D}}\colon (X,0)\to (\C^2,0)$ be the generic plane projection of $(X,0)$ associated with $\cal D$. 
Assume first that $k=3$ and denote by $L_{\cal D}$ the projective line in $\Gr(2,\C^3) \cong \Pro^2$ consisting of the $2$-planes of $\C^3$ which contain the line $\cal D$.   
By  Bertini's theorem, $L_{\cal D}$ intersects the complex curve $\overline\lambda(E)$ transversely at smooth points.
Therefore, the multigerm $(\overline{\lambda})^{-1}(L_{\cal D}) = $ consists of disjoint smooth curves intersecting transversely $E$ at a finite numbers of smooth points. 
Since $\Pi_{\cal D} \setminus\{0\}$ is the set of critical points of the restriction $\ell_{\cal D} \colon (X,0) \to (\C^2,0)$ of the linear projection $\C^k \to \C^2$ with kernel $\cal D$ to $(X,0)$, then the strict transform  $\Pi_{\cal D}^*$ by $\overline{\nu}$ is the multigerm defined by $\Pi_{\cal D}^*=\overline\lambda^{-1}(L_{\cal D})$. 
This proves $(ii)$ when $k=3$.  
	
Assume now that $k\ge 3$ and let us choose an $(k-3)$-dimensional subspace $W \subset \C^k$ transverse to    the $2$-planes which belong to the finite set  $B=\overline\lambda(A)$ where $A=\Pi_{\cal D}^* \cap E$.   
Let $\Gr(2,\C^k;W)$ denote the set of $2$-planes in $\C^k$ transverse to $W$, so that the projection $p\colon \C^k\to \C^k/W$ induces a map $p'\colon \Gr(2,\C^k;W) \to \Gr(2,\C^k/W)\cong P^2\C$.
Observe that $\Gr(2,\C^k;W) $ is a Zariski open subset of $\Gr(2,\C^k)$ containing $B$, therefore on a small neighborhood of $A$ in $\overline{\cal N(X)}$ we can define the map $\lambda' = p' \circ \overline{\lambda}$.
We now follow the lines of the argument in the case $k=3$ but using $\lambda'$ instead of $\overline{\lambda}$.
Let $L_{\cal D}$ be the projective line in $ \Gr(2,\C^k/W)\cong P^2\C$ image by $p'$ of the set of $2$-planes that intersect $\cal D$ non-trivially. 
By genericity of $\cal D$ and by Bertini's theorem, $L_{\cal D}$ intersects the complex curve $\overline\lambda(E)$ transversely at smooth points. 
Therefore, the multigerm $\Pi_{\cal D}^* = (\lambda')^{-1}(L_{\cal D})$ consists of disjoint smooth curves intersecting tranversely $E$ at smooth points. 
\end{proof}

\section{A new example of a LNE normal surface singularity}
\label{appendix:example}

The aim of this appendix is to prove the following result:

\begin{proposition}  \label{prop:new LNE} 
	The hypersurface singularity $(X,0)\subset(\C^3,0)$ defined by the equation $x^5+y^5+z^5+xyz=0$ is LNE. 
\end{proposition}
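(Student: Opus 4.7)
The plan is to verify that $(X,0)$ is LNE by first exhibiting its minimal good resolution and then applying a sufficient condition from \cite{NeumannPedersenPichon2020a}, which reduces the LNE property to a local verification at each $\mathcal{P}$-node.

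First I would compute the minimal good resolution. Since the tangent cone of $X$ is $xyz=0$, the blowup of $(\mathbb{C}^3,0)$ produces a strict transform $\widetilde{X}$ meeting the exceptional $\mathbb{P}^2$ along the triangle of three lines $L_1,L_2,L_3$. A local calculation in each of the three affine charts shows that at the vertex $L_i\cap L_j$ the strict transform has equation $z^2+xy+O(\text{higher})=0$, that is an $A_1$ double point. Resolving these three $A_1$'s with single $(-2)$-curves $C_{12},C_{13},C_{23}$ produces the minimal good resolution $\pi\colon X_\pi\to X$, whose dual graph $\Gamma_\pi$ is a cycle of six rational curves alternating between the $L_i$ and the $C_{ij}$. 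Since the three planes of the tangent cone are pairwise transverse, a generic hyperplane section consists of three smooth branches, one meeting each $L_i$ transversely, so by Proposition~\ref{prop:reduced tangent cone} the $L_i$ are the $\mathcal{L}$-nodes and all multiplicities $m_v$ equal $1$. Imposing $\pi^*h\cdot E_v=0$ for a generic linear form $h$ then yields $L_i^2=-3$ and $C_{ij}^2=-2$.

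Second I would compute the candidate metric invariants. With all multiplicities equal to $1$, the distance $d$ of Theorem~\ref{thm:main} is the combinatorial graph distance and the formula of part~\ref{thm:main_inner_rates} predicts $q_{L_i}=1$ and $q_{C_{ij}}=2$. Inserting these into the Laplacian formula~\eqref{equation:laplacian_formula_effective} and solving for $P_\pi$ yields $p_{L_i}=0$ and $p_{C_{ij}}=2$, so the candidate $\mathcal{P}$-nodes are exactly the three $(-2)$-curves, each carrying two polar branches. These values are internally consistent with $(X,0)$ being LNE, but the key remaining task is to verify that the predicted inner rate at each $C_{ij}$ matches the outer contact of suitable test arcs.

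Third I would verify LNE directly via a test-curve argument at each $\mathcal{P}$-node, relying on the criterion of \cite[Theorem~1.11]{NeumannPedersenPichon2020a}: it suffices to show that for every pair of real arcs on $(X,0)$ whose lifts to $X_\pi$ become distinct curvettes of a common $C_{ij}$, the outer contact equals the inner contact $q_{C_{ij}}=2$. At each $A_1$ node of $\widetilde{X}$ the local equation is of the form $z^2+xy+O(5)$, and the corresponding two sheets of $(X,0)$ near origin are tangent to two transverse planes of the tangent cone. Since the isolated $A_1$ singularity is itself LNE, the outer contact of two curvettes through the $A_1$ point, computed inside the local model $z^2+xy=0$, equals $2$; what remains is to check that the fifth-degree perturbation $x^5+y^5+z^5$ does not decrease this contact.

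The main obstacle lies in this last point: a rigorous comparison of inner and outer contacts on the perturbed model. I would address it by parametrizing pairs of test arcs through each $A_1$ node via the $A_1$ resolution and estimating the contribution of the degree-five terms, showing that they are of order strictly greater than $2$ (the inner rate), so the leading order of both inner and outer contacts is governed by the $A_1$ normal form. Combined with the fact that the only $\mathcal{P}$-nodes are the $C_{ij}$'s, so pairs of arcs not through any $A_1$ point satisfy the LNE condition automatically by Proposition~\ref{prop:morphism_graph_local}-type transversality, this completes the proof that $(X,0)$ is LNE.
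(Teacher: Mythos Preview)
Your computation of the resolution graph is correct and matches the paper. However, the argument has a genuine circularity problem that you need to address.

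In your second step you use parts~\ref{thm:main_inner_rates} and~\ref{thm:main_P-vector} of Theorem~\ref{thm:main} to compute the inner rates and the $\cal P$-vector. Those formulas are only proved under the hypothesis that $(X,0)$ is LNE, which is precisely what you are trying to establish. You correctly flag them as ``candidate'' values, but you then rely on them in the third step when you say ``the only $\cal P$-nodes are the $C_{ij}$'s'' and invoke ``Proposition~\ref{prop:morphism_graph_local}-type transversality'' for arcs away from the $A_1$ points. Proposition~\ref{prop:morphism_graph_local} also has LNE as a standing hypothesis, so this is circular. Without an \emph{independent} determination of the polar curve and the $\cal P$-nodes, you cannot localize the Test Curve Criterion to the three $(-2)$-curves, and the criterion of \cite{NeumannPedersenPichon2020a} involves more than just comparing outer and inner contacts of curvette pairs at a presumed $\cal P$-node.

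The paper breaks the circularity in the most direct way: it computes the generic polar curve explicitly as $\alpha f_x+\beta f_y+\gamma f_z=0$ with $f_x=5x^4+yz$, etc., and checks in the charts of $\pi$ that the family has no base points and that its strict transform meets only $E_{w_1},E_{w_2},E_{w_3}$, each with $p_{w_i}=2$. This shows, with no LNE assumption, that $\pi$ factors through the Nash transform and that the $\cal P$-nodes are exactly the $(-2)$-curves. The paper then observes that the lifted Gauss map $\widetilde\lambda\colon\Gamma_\pi\to\Gr(2,\C^3)$ is constant on each connected component of $\Gamma_\pi\setminus\{\cal P\text{-nodes}\}$, sending the component containing $v_i$ to the corresponding line of the tangent cone $xyz=0$. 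Since these three limiting tangent planes are pairwise distinct, the argument of \cite[\S5]{MisevPichon2018} together with \cite[Lemma~5.2]{MisevPichon2018} feeds directly into the Test Curve Criterion \cite[Theorem~3.8]{NeumannPedersenPichon2020a} and concludes. This replaces your ad~hoc estimate on the fifth-order perturbation of the local $A_1$ model with a structural argument about the Gauss map, and avoids any appeal to results that presuppose LNE.
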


This is a cusp singularity (see \cite{Laufer1977}).
The blowup of $(X,0)$ at $0$ has as exceptional divisor a loop formed by three rational curves, and three singular points (all being $A_1$ singularities) where two of those intersects.
The minimal resolution $\pi \colon X_{\pi} \to X$ is then obtained by composing this blowup with the blowups of those three singular points.
The dual graph $\Gamma_\pi$ of $\pi$ is showed in Figure~\ref{newex_figure 1}, where the negative numbers denote the self-intersections of the corresponding components. 
The vertices $v_1,v_2$, and $v_3$ are the $\cal L$-nodes of $\Gamma_\pi$ and carry one arrow each, representing a component of the strict transform to $X_\pi$ of a generic linear form  $h \colon (X,0) \to (\C^2,0)$.

\begin{figure}[h] 
	\centering
	\begin{tikzpicture}
	\draw[thin,>-stealth,->](0,0)--+(0,0.5);
	\draw[thin,>-stealth,->](-1.6,-2)--+(-0.6,-0.4);
	\draw[thin,>-stealth,->](1.6,-2)--+(0.6,-0.4);
	
	\draw[fill ] (0,0)circle(2pt);
	\draw[fill ] (1.6,-2)circle(2pt);
	\draw[fill ] (-1.6,-2)circle(2pt);
	\draw[fill ] (0.8,-1)circle(2pt);
	\draw[fill ] (-0.8,-1)circle(2pt);
	\draw[fill ] (0,-2)circle(2pt);

	\draw[thin ](0,0)--(1.6,-2);
	\draw[thin ](0,0)--(-1.6,-2);
	\draw[thin ](-1.6,-2)--(1.6,-2);
	
	\node(a)at(-0.4,0.05){$-3$};
	\node(a)at(1.35,-2.3){$-3$};
	\node(a)at(-1.55,-2.3){$-3$};
	\node(a)at(0.4,-1.2){$-2$};
	\node(a)at(-.55,-1.2){$-2$};
	\node(a)at(0,-1.75){$-2$};
	
	\node(a)at(0.35,0.05){$v_1$};
	\node(a)at(1.8,-1.8){$v_3$};
	\node(a)at(-1.9,-1.8){$v_2$};
	\node(a)at(1.15,-.9){$w_2$};
	\node(a)at(-1.2,-.9){$w_3$};
	\node(a)at(0,-2.35){$w_1$};
	\end{tikzpicture}
\caption{}
\label{newex_figure 1}
\end{figure}

Therefore, $(X,0)$ gives an example of a LNE normal surface singularity which was not previously known. 
Indeed, the examples of Lipschitz Normally Embedded surface germs already known are either minimal surface singularities (\cite{NeumannPedersenPichon2020b} or superisolated surface singularities with LNE tangent cone (\cite{MisevPichon2018}).
The cusp singularity  $(X,0) \colon x^5+y^5+z^5+xyz=0$ is not minimal, since it is not rational (its resolution graph is not a tree), nor superisolated, since it is not resolved by the blowup of its maximal ideal.

\begin{proof} 
%
	A generic polar curve $\Pi$ of $(X,0)$ has equation $g(x,y,z)=0$,
	where $g$ is a generic linear combination $\alpha f_x + \beta f_y + \gamma f_z$ of the three partial derivatives $f_x = 5x^4+yz$, $f_y=5y^4+xz$ and $f_z=5z^4+xy$ of $f$. 
	A direct computation in the charts of the point blowups forming $\pi$ shows that the family of generic polar curves has no base points on $X_\pi$, that is $\pi$ factors through the Nash transform of $(X,0)$, and that the $\cal P$-nodes of $(X,0)$ are the three vertices $w_1, w_2$, and $w_3$, with $p_{w_i}=2$.
	Observe that it is easy to see from the defining equation that the projectivized tangent cone of $(X,0)$ consists of three distinct projective lines $C_{v_1}, C_{v_2}$, and $C_{v_3}$, each one corresponding to one of the three $\cal L$-nodes of $(X,0)$.
	The Gauss map $\lambda \colon X_\pi \to \mathrm{Gr}(2,\C^3)$ induces a natural map $\widetilde\lambda \colon \Gamma_\pi \to \mathrm{Gr}(2,\C^3)$ which is constant on each connected component of $\Gamma_\pi \setminus \{\cal P-\text{nodes}\}$, sending any point of the connecting component containing the $\cal L$-node $v_i$ to the projective line $C_{v_i}$.
	Then the fact that $(X,0)$ is LNE is a direct consequence of \cite[Lemma 5.2]{MisevPichon2018} and the Test Curve Criterion \cite[Theorem~3.8]{NeumannPedersenPichon2020a}, repeating the arguments of \cite[\S5]{MisevPichon2018}.
\end{proof}

\bibliographystyle{alpha}                              
\bibliography{biblio}

\begin{thebibliography}{GBGPPP19}

\bibitem[Art66]{Artin1966}
Michael Artin.
\newblock On isolated rational singularities of surfaces.
\newblock {\em Amer. J. Math.}, 88:129--136, 1966.

\bibitem[BdSFP19]{BelottodaSilvaFantiniPichon2019}
Andr{\'e} Belotto~da Silva, Lorenzo Fantini, and Anne Pichon.
\newblock Inner geometry of complex surfaces: a valuative approach.
\newblock {\em arXiv preprint arXiv:1905.01677}, 2019.
\newblock To appear in \textit{Geom. Topol.}

\bibitem[BK86]{BrieskornKnoerrer1986}
Egbert Brieskorn and Horst Kn\"{o}rrer.
\newblock {\em Plane algebraic curves}.
\newblock Birkh\"{a}user Verlag, Basel, 1986.
\newblock Translated from the German by John Stillwell.

\bibitem[BM18]{BirbrairMendes2018}
Lev Birbrair and Rodrigo Mendes.
\newblock Arc criterion of normal embedding.
\newblock In {\em Singularities and foliations. geometry, topology and
  applications}, volume 222 of {\em Springer Proc. Math. Stat.}, pages
  549--553. Springer, Cham, 2018.

\bibitem[BNP14]{BirbrairNeumannPichon2014}
Lev Birbrair, Walter~D. Neumann, and Anne Pichon.
\newblock The thick-thin decomposition and the bilipschitz classification of
  normal surface singularities.
\newblock {\em Acta Math.}, 212(2):199--256, 2014.

\bibitem[Bon03]{Bondil2003}
Romain Bondil.
\newblock Discriminant of a generic projection of a minimal normal surface
  singularity.
\newblock {\em C. R. Math. Acad. Sci. Paris}, 337(3):195--200, 2003.

\bibitem[Bon05]{Bondil2005}
Romain Bondil.
\newblock General elements of an {$m$}-primary ideal on a normal surface
  singularity.
\newblock In {\em Singularit\'{e}s {F}ranco-{J}aponaises}, volume~10 of {\em
  S\'{e}min. Congr.}, pages 11--20. Soc. Math. France, Paris, 2005.

\bibitem[Bon16]{Bondil2016}
Romain Bondil.
\newblock Fine polar invariants of minimal singularities of surfaces.
\newblock {\em J. Singul.}, 14:91--112, 2016.

\bibitem[dBHPS19]{BobadillaHeinzePereiraSampaio2019}
Javier~Fern\'andez de~Bobadilla, Sonja Heinze, Maria~Pe Pereira, and Jose~Edson
  Sampaio.
\newblock Moderately discontinuous homology.
\newblock {\em arXiv preprint arXiv:1910.12552}, 2019.
\newblock To appear in \textit{Communications on Pure and Applied Mathematics}.

\bibitem[Fan18]{Fantini2018}
Lorenzo Fantini.
\newblock Normalized {B}erkovich spaces and surface singularities.
\newblock {\em Trans. Amer. Math. Soc.}, 370(11):7815--7859, 2018.

\bibitem[Fer03]{Fernandes2003}
Alexandre Fernandes.
\newblock Topological equivalence of complex curves and bi-{L}ipschitz
  homeomorphisms.
\newblock {\em Michigan Math. J.}, 51(3):593--606, 2003.

\bibitem[FS19]{FernandesSampaio2019}
Alexandre Fernandes and J~Edson Sampaio.
\newblock Tangent cones of {L}ipschitz normally embedded sets are {L}ipschitz
  normally embedded. {A}ppendix by {A}nne {P}ichon and {W}alter {D}. {N}eumann.
\newblock {\em Int. Math. Res. Not. IMRN}, (15):4880--4897, 2019.

\bibitem[GBGPPP19]{BarrosoPerezPopescu2019}
Evelia~R. Garc\'{\i}a~Barroso, Pedro~D. Gonz\'alez~P\'erez, and Patrick
  Popescu-Pampu.
\newblock The valuative tree is the projective limit of eggers-wall trees.
\newblock {\em Revista de la Real Academia de Ciencias Exactas, F\'{\i}sicas y
  Naturales. Serie A. Matem\'aticas}, 113(4):4051--4105, Mar 2019.

\bibitem[Gra62]{Grauert1962}
Hans Grauert.
\newblock \"{U}ber {M}odifikationen und exzeptionelle analytische {M}engen.
\newblock {\em Math. Ann.}, 146:331--368, 1962.

\bibitem[GSLJ97]{GonzalezLejeune1997}
G\'{e}rard Gonzalez-Sprinberg and Monique Lejeune-Jalabert.
\newblock Families of smooth curves on surface singularities and wedges.
\newblock {\em Ann. Polon. Math.}, 67(2):179--190, 1997.

\bibitem[Hir83]{Hironaka1983}
Heisuke Hironaka.
\newblock On {N}ash blowing-up.
\newblock In {\em Arithmetic and geometry, {V}ol. {II}}, volume~36 of {\em
  Progr. Math.}, pages 103--111. Birkh\"{a}user, Boston, Mass., 1983.

\bibitem[Kol85]{Kollar1985}
J\'{a}nos Koll\'{a}r.
\newblock Toward moduli of singular varieties.
\newblock {\em Compositio Math.}, 56(3):369--398, 1985.

\bibitem[KPR18]{KernerPedersenRuas2018}
Dmitry Kerner, Helge~M{\o}ller Pedersen, and Maria A~S Ruas.
\newblock Lipschitz normal embeddings in the space of matrices.
\newblock {\em Math. Z.}, 290(1-2):485--507, 2018.

\bibitem[L\^00]{Le2000}
D\~{u}ng~Tr\'{a}ng L\^{e}.
\newblock Geometry of complex surface singularities.
\newblock In {\em Singularities---{S}apporo 1998}, volume~29 of {\em Adv. Stud.
  Pure Math.}, pages 163--180. Kinokuniya, Tokyo, 2000.

\bibitem[Lau71]{Laufer1971}
Henry~B. Laufer.
\newblock {\em Normal two-dimensional singularities}.
\newblock Princeton University Press, Princeton, N.J.; University of Tokyo
  Press, Tokyo, 1971.
\newblock Annals of Mathematics Studies, No. 71.

\bibitem[Lau72]{Laufer1972}
Henry~B. Laufer.
\newblock On rational singularities.
\newblock {\em Amer. J. Math.}, 94:597--608, 1972.

\bibitem[Lau77]{Laufer1977}
Henry~B. Laufer.
\newblock On minimally elliptic singularities.
\newblock {\em Amer. J. Math.}, 99(6):1257--1295, 1977.

\bibitem[Mos85]{Mostowski1985}
Tadeusz Mostowski.
\newblock Lipschitz equisingularity.
\newblock {\em Dissertationes Math. (Rozprawy Mat.)}, 243:46, 1985.

\bibitem[Mos88]{Mostowski1988}
Tadeusz Mostowski.
\newblock Tangent cones and {L}ipschitz stratifications.
\newblock In {\em Singularities ({W}arsaw, 1985)}, volume~20 of {\em Banach
  Center Publ.}, pages 303--322. PWN, Warsaw, 1988.

\bibitem[MP21]{MisevPichon2018}
Filip Misev and Anne Pichon.
\newblock Lipschitz normal embedding among superisolated singularities.
\newblock {\em Int. Math. Res. Not. IMRN}, (17):13546--13569, 2021.

\bibitem[Mum61]{Mumford1961}
David Mumford.
\newblock The topology of normal singularities of an algebraic surface and a
  criterion for simplicity.
\newblock {\em Inst. Hautes \'Etudes Sci. Publ. Math.}, (9):5--22, 1961.

\bibitem[N{\'e}m99]{Nemethi1999}
A.~N{\'e}methi.
\newblock Five lectures on normal surface singularities.
\newblock In {\em Low dimensional topology ({E}ger, 1996/{B}udapest, 1998)},
  volume~8 of {\em Bolyai Soc. Math. Stud.}, pages 269--351. J\'anos Bolyai
  Math. Soc., Budapest, 1999.
\newblock With the assistance of {\'A}gnes Szil{\'a}rd and S{\'a}ndor
  Kov{\'a}cs.

\bibitem[Neu81]{Neumann1981}
Walter~D. Neumann.
\newblock A calculus for plumbing applied to the topology of complex surface
  singularities and degenerating complex curves.
\newblock {\em Trans. Amer. Math. Soc.}, 268(2):299--344, 1981.

\bibitem[NP07]{NeumannPichon2007}
Walter~D. Neumann and Anne Pichon.
\newblock Complex analytic realization of links.
\newblock In {\em Intelligence of low dimensional topology 2006}, volume~40 of
  {\em Ser. Knots Everything}, pages 231--238. World Sci. Publ., Hackensack,
  NJ, 2007.

\bibitem[NP14]{NeumannPichon2014}
Walter~D. Neumann and Anne Pichon.
\newblock Lipschitz geometry of complex curves.
\newblock {\em J. Singul.}, 10:225--234, 2014.

\bibitem[NPP20a]{NeumannPedersenPichon2020a}
Walter~D. Neumann, Helge~M{\o}ller Pedersen, and Anne Pichon.
\newblock A characterization of {L}ipschitz normally embedded surface
  singularities.
\newblock {\em J. Lond. Math. Soc. (2)}, 101(2):612--640, 2020.

\bibitem[NPP20b]{NeumannPedersenPichon2020b}
Walter~D. Neumann, Helge~M{\o}ller Pedersen, and Anne Pichon.
\newblock Minimal surface singularities are {L}ipschitz normally embedded.
\newblock {\em J. Lond. Math. Soc. (2)}, 101(2):641--658, 2020.

\bibitem[Par88]{Parusinski1988}
Adam Parusi\'{n}ski.
\newblock Lipschitz properties of semi-analytic sets.
\newblock {\em Ann. Inst. Fourier (Grenoble)}, 38(4):189--213, 1988.

\bibitem[Par94]{Parusinski1994}
Adam Parusi\'{n}ski.
\newblock Lipschitz stratification of subanalytic sets.
\newblock {\em Ann. Sci. \'{E}cole Norm. Sup. (4)}, 27(6):661--696, 1994.

\bibitem[Pic20]{Pichon2020}
Anne Pichon.
\newblock An introduction to {L}ipschitz geometry of complex singularities.
\newblock In {\em Introduction to {L}ipschitz geometry of singularities},
  volume 2280 of {\em Lecture Notes in Math.}, pages 167--216. Springer, Cham,
  2020.

\bibitem[PP02]{PopescuPampu2002}
Patrick Popescu-Pampu.
\newblock On the invariance of the semigroup of a quasi-ordinary surface
  singularity.
\newblock {\em C. R. Math. Acad. Sci. Paris}, 334(12):1001--1106, 2002.

\bibitem[PP04]{PopescuPampu2004}
Patrick Popescu-Pampu.
\newblock On the analytical invariance of the semigroups of a quasi-ordinary
  hypersurface singularity.
\newblock {\em Duke Math. J.}, 124(1):67--104, 2004.

\bibitem[PT69]{PhamTeissier1969}
Fr\'ed\'eric Pham and Bernard Teissier.
\newblock Fractions lipschitziennes d'une alg\`ebre analytique complexe et
  saturation de {Z}ariski.
\newblock {\em Pr\'epublications Ecole Polytechnique}, No. M17.0669, 1969.

\bibitem[Rem94]{Remmert1994}
R.~Remmert.
\newblock Local theory of complex spaces.
\newblock In {\em Several complex variables, {VII}}, volume~74 of {\em
  Encyclopaedia Math. Sci.}, pages 7--96. Springer, Berlin, 1994.

\bibitem[Spi90]{Spivakovsky1990}
Mark Spivakovsky.
\newblock Sandwiched singularities and desingularization of surfaces by
  normalized {N}ash transformations.
\newblock {\em Ann. of Math. (2)}, 131(3):411--491, 1990.

\bibitem[Tei77]{Teissier1976}
B.~Teissier.
\newblock R\'esolution simultan\'ee : {II} - {R}\'esolution simultan\'ee et
  cycles \'evanescents.
\newblock {\em S\'eminaire sur les singularit\'es des surfaces}, 1976-1977.
\newblock talk:9.

\bibitem[Tei82]{Teissier1982}
Bernard Teissier.
\newblock Vari\'{e}t\'{e}s polaires. {II}. {M}ultiplicit\'{e}s polaires,
  sections planes, et conditions de {W}hitney.
\newblock In {\em Algebraic geometry ({L}a {R}\'{a}bida, 1981)}, volume 961 of
  {\em Lecture Notes in Math.}, pages 314--491. Springer, Berlin, 1982.

\bibitem[Val05]{Valette2005}
Guillaume Valette.
\newblock Lipschitz triangulations.
\newblock {\em Illinois J. Math.}, 49(3):953--979, 2005.

\bibitem[Zar39]{Zariski1939}
Oscar Zariski.
\newblock The reduction of the singularities of an algebraic surface.
\newblock {\em Ann. of Math. (2)}, 40:639--689, 1939.

\end{thebibliography}

\vfill

\end{document}